\documentclass[a4paper]{article}
\usepackage{graphicx} % Required for inserting images
\usepackage[hidelinks]{hyperref}
\usepackage{amsfonts,amssymb,amsmath,mathtools,amsthm}
\usepackage{tabularx}
\usepackage[colorinlistoftodos]{todonotes}
\usepackage{pdfpages}
\usepackage{geometry}
\usepackage{comment}

\theoremstyle{plain}
\newtheorem{theorem}{Theorem}[section]
\newtheorem{proposition}[theorem]{Proposition}

\theoremstyle{definition}

\newtheorem{remark}[theorem]{Remark}
\theoremstyle{plain}

\numberwithin{equation}{section}

\newcommand \Tcal           {\mathcal{T}}

\newcommand \RR           {\mathbb{R}}

\definecolor{other_green}{RGB}{20,130,0}

\begin{document}

% \title{Dengue fever modeling with virus load and antibody levels}
\title{A structured model of vector-borne disease with within-host viral load and antibody dynamics}
\author{Paulo Amorim$^{1}$, M.~Soledad Aronna$^{1}$, Débora O. Medeiros$^{1,2,3}$}
\footnotetext[1]{Escola de Matemática Aplicada, Fundação Getulio Vargas FGV EMAp, Praia de Botafogo 190, Rio de Janeiro 22250-900 RJ Brazil}
\footnotetext[2]{Current address: Instituto de Matemática e Estatística, Universidade Federal Fluminense IME-UFF, São Domingos, Niterói 24210-200 RJ Brazil}
\footnotetext[3]{Corresponding author. \texttt{deboramedeiros@id.uff.br}}

\date{}

%
%
%
%%%%%%%%%%%%%%%%%%%%%%%%%%%%%%%%%%%%%%%%%%%%%%%%%%%%%%%%%%%%%%%%%%

%
\maketitle
%
%\tableofcontents

%
%
%
%%%%%%%%%%%%%%%%%%%%%%%%%%%%%%%%%%%%%%%%%%%%%%%%%%%%%%%%%%%%%%%%%%

\begin{abstract}
    % We present an epidemiological model for vector-borne diseases, including witihin-host viral load and antibody dynamics, via structured (transport) equations, with particular emphasis on Dengue fever. The within-host dynamics is present in the infected and recovered host compartments. The structure induces nonlinearities and nonlocalities in the formulation. We analyse the model, showing well-posedness, mass conservation, and study the characteristic curves of the transport equation. We deduce a simplified Uniform Host Response (UHR) model, which includes delay-type terms. For both the full and UHR model, we determine the basic reproduction number $\mathcal{R}_0$, show that it acts as a threshold value with respect to the existence of an endemic equilibrium, and comment on its relation to linear stability of the disease-free equilibrium. We show with numerical experiments how the within-host dynamics influences the epidemiological outcomes. 
%
    We present an epidemiological model for vector-borne diseases that includes within-host viral load and antibody dynamics using structured transport equations. By incorporating the internal dynamics into the infected and recovered host compartments, the formulation introduces nonlinearities and nonlocalities. We establish analytical properties, including well-posedness and mass conservation, and characterize its characteristic curves. Furthermore, we derive a simplified Uniform Host Response (UHR) model featuring delay-type terms. For both the full and UHR frameworks, the basic reproduction number $\mathcal{R}_0$ is determined and shown to serve as a threshold for the existence of an endemic equilibrium, and is related to the linear stability of the disease-free state. Finally, numerical experiments, parameterized specifically for Dengue fever, demonstrate how within-host mechanisms influence population-level epidemiological outcomes.
\end{abstract}
%%%%%%%%%%%%%%%%%%%%%%%%%%%%%%%%%%%%%%%%%%%%%%%%%%%%%%%%%%%%%%%%%%
%
\textbf{Keywords:}  Vector-borne diseases, Epidemiology, Dengue fever, Structured model, Within-host dynamics

\textbf{MSC classification:} 92D30, 35Q49, 35Q92.

%
%
%
%%%%%%%%%%%%%%%%%%%%%%%%%%%%%%%%%%%%%%%%%%%%%%%%%%%%%%%%%%%%%%%%%%
\section{Introduction}
\label{sec:Intro}
%%%%%%%%%%%%%%%%%%%%%%%%%%%%%%%%%%%%%%%%%%%%%%%%%%%%%%%%%%%%%%%%%%
Vector-borne diseases (VBDs) represent a major global health burden, accounting for more than 17\% of all infectious diseases and causing over 700,000 deaths annually. Among these, Dengue fever stands out as a significant public health concern, with approximately half of the global population at risk and an estimated 100 to 400 million infections occurring each year \cite{WHO2024}. Transmitted primarily through the {\em Aedes} species mosquito, Dengue presents unique epidemiological challenges due to the existence of four antigenically distinct serotypes, DENV1--DENV4 \cite{st2019adaptive}. 

While primary infection with one serotype provides life-long homologous immunity, cross-protection against heterologous serotypes is only temporary \cite{weiskopf2014t}. Furthermore, subsequent infections with a different serotype carry a higher risk of severe disease or mortality, a risk largely attributed to the phenomenon of {\em antibody-dependent enhancement (ADE)} \cite{st2019adaptive}. Developing mathematical frameworks to quantify the relevance of ADE is crucial for understanding Dengue dynamics. In this article, we take an initial step toward this goal by proposing a multiscale model that couples ordinary differential equations with partial differential equations, the latter of which are structured by within-host viral load and antibody levels.

% \hrule
% Dengue fever is a vector-borne disease transmitted from person to person through the {\em Aedes} species mosquito. Currently, approximately half of the global population is at risk of contracting dengue, with an estimated 100 to 400 million infections occurring annually \cite{WHO2024}. Consequently, dengue has become a significant public health concern in many regions worldwide. 
% Humans can experience symptomatic dengue virus (DENV) infections more than once due to four antigenically distinct serotypes \cite{st2019adaptive}, namely DENV1-DENV4. 
% Individuals infected with one serotype maintain a life-long protective immunity to infection by the homologous virus, but protective immunity to infection with heterologous serotypes is only temporary \cite{weiskopf2014t}. 
% While primary natural dengue infection is often asymptomatic, a second infection with a different serotype has more chances to be severe or even deadly \cite{st2019adaptive}, mainly related to the phenomenom called {\em antibody-dependent enhancement (ADE)} \cite{st2019adaptive}. Trying to model this phenomenom to better understand and hopefully quantify its relevance, is of main importance. In this article, we take an initial step towards that goal, by first proposing a model combining ordinary and partial differential equations, the latter stratified by viral load and antibody levels.

Mathematical modeling has historically played a pivotal role in understanding dengue transmission dynamics.
Models date back from the 1970s \cite{fischer1970observations}, with time-discrete sequential models. 
Compartmental SIR-type models with vector-host interactions have been the most widely used framework \cite{bailey1975mathematical,dietz1975transmission,esteva1998analysis,andraud2012dynamic,aron1982population,feng1997competitive}. 
These systems often incorporate additional compartments to account for the existence of multiple strains \cite{aguiar2022mathematical,feng1997competitive,ferguson1999effect}, some of them considering possible cross-immunity between serotypes and/or the occurrence of ADE. Age-structured \cite{pongsumpun2003transmission,ferguson1999transmission} and spatial-dependent \cite{cummings2004travelling,takahashi2005mathematical}  models address heterogeneity in transmission, while agent-based models capture fine-scale behavioral and environmental factors \cite{hunter2017taxonomy}. Recent advances integrate climate variables (e.g., temperature-dependent mosquito traits) \cite{chen2012modeling} and control measures (insecticides \cite{carvalho2019mathematical}, vaccination \cite{aguiar2017mathematical}, {\em Wolbachia} \cite{bliman2018ensuring,almeida2024optimal}, sterile insect technique \cite{aronna2020nonlinear,almeida2022optimal}). 
Challenges remain in parameterizing serotype interactions and quantifying the impact of novel interventions, but these models continue to guide public health strategies globally.

\if{See e.g.
\begin{itemize}
    \item Fischer DB, Halstead SB. Observations related to pathogenesis of dengue hemorrhagic fever. V. Examination of age specific sequential infection rates using a mathematical model. The Yale journal of biology and medicine. 1970 Apr;42(5):329.
\end{itemize}
}\fi

One particularly active field of research is that of models containing within-host dynamics, where the microscopic interactions between (say) virus particles and immune cells is explicitly modeled. The challenge is to link the insights from the microscale with the dynamics of the epidemic at the population level, where more traditional models act. There have been many recent works on this subject, and we mention here \cite{doran2023mathematical,mideo2008linking,childs2019linked,feng2012model,almocera2018multiscale} as a small sample. The common feature of many of these models is to effectively link the epidemiological and the immunological processes by means of multiscale systems.

A natural next step is to extend within-host approaches to vector-borne diseases, which can also involve within-vector modeling. In this context, only a few works are available, including \cite{garira2019coupled,agusto2019transmission}
for malaria, \cite{gulbudak2020immuno} for a general vector-borne disease, and \cite{loisel2024within,saldana2025multiscale,nunez2025dynamic} focusing on arboviruses. 

Our model includes antibody and viral load structures for the infected host population, modeled by transport equations (commonly called structured equations in this context) making it a model with within-host dynamics. 
The use of structured models in epidemiology goes back to the very first contributions by Kermack {\it et al.} \cite{kermack,kermack2,kermack3}, and has been an area of active study in recent decades: see, for instance, \cite{diekmann2013mathematical,auger2008structured,angulo,hethcote}. A main reference work where many developments are presented and an extensive bibliography is collected, is the book \cite{perthame}.

As it turns out, age or time since infection structures are the most widely used in the literature \cite{perthame}. However, it is possible to consider other structure variables, either through transport or kinetic-type terms, such as disease awareness and social factors \cite{amorim2024modeling,zanella2023kinetic}, viral dynamics \cite{gulbudak2020immuno,della2022sir,della2023sir} or antibody levels \cite{gulbudak2020infection}. It seems especially promising to model within-host dynamics as structure variables in the context of vector-borne diseases.

It is clear that the ADE phenomenon depends crucially on the within-host dynamics. Indeed, it is thought to originate from detrimental interactions between IgG antibodies remaining from a primary infection and a secondary infection with a different serotype, see \cite{st2019adaptive,clapham2016modelling}. As a first step to model this phenomenon in the context of a structured model, we consider an infected host population structured by antibody level and viral load, in this work restricted to a single serotype and a single (short-lived) IgM antibody.

We propose a host–vector model for vector-borne disease in which the human population is divided into susceptible, exposed, infected, and recovered compartments, while the vector population consists of exposed and infectious vectors. Susceptible and exposed hosts depend only on time, whereas the infected host population is structured by viral load and antibody level, allowing the model to capture the joint evolution of pathogen dynamics and immune response within individuals. The infected density therefore describes the distribution of infected hosts with respect to these two variables, while the recovered population is structured only by antibody level, reflecting the persistence of immune response after viral clearance. Recovered individuals no longer carry detectable viral load and cannot infect vectors, but they remain temporarily protected from reinfection until their antibody levels decline sufficiently to return to the susceptible class. Exposed hosts represent individuals in the incubation phase, with very low viral load and no antibodies, who eventually progress to the infected class and can transmit the virus to vectors. The vector population includes exposed vectors undergoing the extrinsic incubation period and infectious vectors capable of transmitting the disease to human hosts. 
For the proposed model we provide basic properties such as the characterization of the characteristic curves, well-posedness of the full model, derivation of an expression for the {\it basic reproduction number} $\mathcal{R}_0$ and of the disease-free equilibria (DFE). We additionally derive a related simplified model associated to a uniform host response, that allows a simpler analysis and simulation. We present a series of numerical simulations that illustrate the behaviour of our model, and its sensitivity with respect to key parameters. Throughout all the work, and in particular in the numerical simulations, the prototype application of our model is Dengue fever.

% \bigskip

% \SA{XXX: @Paulo, @Debora: Do you think we need further comparison with other results of the kind (look for other articles structures by viral load / antibody level)? Maybe not necessary. Intro long enough, other articles already cited}
%
%
%

% \bigskip

{The manuscript is organized as follows. In Section \ref{sec:Eqns}, we introduce the full model and describe the variables and parameters in detail. Section \ref{sec:properties} is devoted to the presentation of basic analytical properties of the system, including the description of the characteristic curves, mass conservation, well-posedness, and an expression for $\mathcal{R}_0$. In Sections \ref{subsec:DelayModel} and \ref{sec:Dynamics}, we derive and analyze a simplified model with uniform host response. Section \ref{sec:Numerical} presents the numerical simulations, which are followed by a discussion section. Technical proofs are provided in Section~\ref{sec:proofs} and supplementary materials.}

\section{Presentation of the model}
\label{sec:Eqns}
%%%%%%%%%%%%%%%%%%%%%%%%%%%%%%%%%%%%%%%%%%%%%%%%%%%%%%%%%%%%%%%%%%

%
%
%
%%%%%%%%%%%%%%%%%%%%%%%%%%%%%%%%%%%%%%%%%%%%%%%%%%%%%%%%%%%%%%%%%%
\subsection{A structured integro-differential model: outline}
\label{subsec:FullModel}

% \subsubsection{Outline of the model}

We consider a population of human hosts, which we divide into four compartments, namely susceptible, exposed, infected, and recovered. The total host population is assumed to be constant, with value $N_h$. The susceptible and exposed host populations are denoted by $S(t)$ and $E(t),$ respectively, and depend only on the time $t\ge 0$. The infected host population, $I,$ is structured by viral load $z$ and antibody levels $y$. Therefore, $I(t,z,y)$ denotes the density of infected hosts at time $t \geq 0$, with respect to the viral load $z$ and antibody level $y.$

We introduce a minimum detectable viral load $z_0$, and assume that the viral load $z$ varies in $[z_0, + \infty)$ \cite{xu2020defervescent}, while the antibody level $y$ varies in $[0,+ \infty)$. In this way, 
$\int_{z_1}^{z_2} \int_{y_1}^{y_2} I(t,z,y) \, dy dz$ represents
the total infected population at time $t$ having viral load in $[z_1, z_2]$ and antibody level in $[y_1,y_2]$. The recovered population, meanwhile, is structured by the antibody variable $y$ only, and to distinguish it from the susceptible population (which has no significant antibody or virus levels), we suppose that the recovered population has antibody level $y \ge y_0$, for some (small) antibody level $y_0$ which will be determined below. Thus,
the recovered individuals at time $t \geq 0$ are represented by a  density $R(t,y)$ with respect to the antibody level $y$ and $\int_{y_1}^{y_2} R(t,y) \, dy$ represents the total recovered population at time $t$, with antibody level in $[y_1,y_2] \subset [y_0,+\infty)$. The recovered population no longer has an apparent viral load, so they cannot transmit the disease to vectors. However, they remain with antibodies and cannot be reinfected during their recovery period, until the antibody level is so low that they can be considered susceptible again.

Susceptible hosts do not have significant viral load or antibody levels, and so are modeled as a function of time only, $S(t)$. The exposed host population, $E(t)$, has been infected with the dengue virus, but it is still incubating and so there is no viremia. As a result, the viral load is very low, and no antibodies are produced \cite{st2019adaptive}. Over time, exposed hosts reach higher viral load and enter the infected compartment. The infected hosts have a significant viral load (in our model, larger than $z_0$), and a quantifiable antibody level. During this viremia period, infected individuals can infect susceptible mosquitoes. 

The vector population is divided into the exposed vectors $E_v(t)$ (when the virus is in the extrinsic incubation stage) and the infected vectors $I_v(t)$ (when the vectors can transmit the disease to human hosts). We consider a constant total population of vectors $N_v$, and if necessary, the susceptible vectors can be computed by $S_v(t) = N_v - E_v(t) - I_v(t)$.

We now explain in more detail how these modeling assumptions are implemented in our model.

\subsection{Within-host dynamics}
\label{sec:within-host-dynamics}

\subsubsection{Infected dynamics}
Let us first consider the infected compartment $I$, which is structured by viral load $z$ and antibody level $y$. As a structured model, the density $I(t,z,y)$ varies according to
\begin{equation}
  \label{eq:3}
  \frac{\partial }{\partial t} I(t,z,y) + \frac{\partial}{\partial z}\big(V_z \,I(t,z,y)\big) + \frac{\partial }{\partial y} \big(V_y \, I(t,z,y)\big) = 0.
\end{equation}
In such a model, the profile of the density $I(t,z,y)$ in the $(z,y)-$plane moves with velocity $V_y$ in the $y$-direction, and with velocity $V_z$ in the $z$-direction. To define the model, therefore, it is necessary to provide the velocities $V_y,V_z$, which may depend on $t,z,y$, or on other quantities related to the system. 

In infected individuals $I$, we choose the velocities $V_z,V_y$ according to the following phenomenological description of the within-host viral-antibody dynamics \cite{CDCwebsite,st2019adaptive}. We suppose that once the viral load is larger than the minimum detectable value $z_0$, the virus replicates with a rate  $a_1$. This corresponds to a term of the form $a_1 z$ in the velocity term $V_z$, which represents exponential growth of the viral population.
In parallel, the presence of antibodies hinders the virus population, and so $V_z$ should, at least, include a term of the form $- a(y)$, with $a(y)>0$ descibing precisely how the antibody level lowers the growth rate of the virus population. It is reasonable to assume that this effect is monotone increasing in $y$, and so we take $a(y)= a_2 y$ for some fixed $a_2$. Therefore, we set
\begin{equation}
  \label{eq:1}
  V_z = a_1 z - a_2 y,
\end{equation}
while stressing that this is only the simplest reasonable choice\footnote{An example of a more general choice is to take $V_z = a_1 z(1 - z/K_z) - \overline{a}(z,y)$ to describe logistic growth in the virus load (with a carrying capacity $K_z$) and some $\overline{a}(z,y)>0$ governing the (infinitesimal) viral load decrease with respect to $z$ and $y$, or even nonlocal terms as in \cite{amorim2024modeling}. As mentioned, it is reasonable that $\frac{\partial}{\partial y}\overline{a}(z,y) >0$, and the simplest such choice gives $\overline{a}(z,y) = a_2 y$, resulting in \eqref{eq:1}.}.

Similarly, the drift term $V_y$ should account for the growth of the antibody level in response to the presence of virus, and for the natural decay of the antibody level. A simple choice is then
\begin{equation}
  \label{eq:2}
  V_y = -a_3 y + a_4 z.
\end{equation}
Again, a general form $V_y = b(z,y)$ could be considered, with $\frac{\partial}{\partial y} b(0,y) <0$ (reflecting the natural decay of the antibody level) and $\frac{\partial}{\partial z}b(z,y) >0$ (reflecting the fact that virus presence stimulates antibody production).

All in all, \eqref{eq:3}--\eqref{eq:2} give
\begin{equation}
  \label{eq:4}
  \frac{\partial }{\partial t} I(t,z,y) + \frac{\partial}{\partial z}\big( (a_1 z  -a_2 y)\, I(t,z,y) \big) + \frac{\partial }{\partial y} \big((-a_3 y + a_4 z)\, I(t,z,y)\big) = 0
\end{equation}
for the within-host dynamics of the virus and antibody in the infected compartment,
with $z\in [z_0,+\infty)$ and $y\in [0,+\infty)$.

\subsubsection*{The antibody threshold, $y_0$}

According to our assumptions, it is natural that individuals enter the infected compartment $I$ having a low antibody level. After that, the within-host dynamics plays out, where the virus initially replicates, which motivates antibody production. In turn, this allows for the reduction of the viral load, until it reaches the detectable threshold $z_0$ again, but this time in a  decreasing way.

% Mathematically, a necessary condition for this behavior is expressed by the direction of the vector field $(V_z,V_y),$ defined in \eqref{eq:1},\eqref{eq:2}, along the line $Z_0 :=\{ (z_0,y) :y\ge 0\}$ (see Fig.~\ref{fig:IDynamics}): it should hold that $V_z>0$ for low values of $y$, and, for higer values of $y$, that $V_z<0$.

A sufficient condition for this behavior can be formulated in terms of the direction of the vector field $(V_z,V_y),$ defined in \eqref{eq:1},\eqref{eq:2}, along the half-line $\{ (z_0,y) :y\ge 0\}$ (see Fig.~\ref{fig:IDynamics}). Specifically,  $V_z$ must be positive for small values of y and negative for sufficiently large values of y.

The form of the transport terms in \eqref{eq:4} reflects exactly this type of behavior, and in addition allows us to calculate the antibody threshold $y_0$ where the sign change of $V_z$ takes place. From \eqref{eq:1} we see that $V_z >0
$ if, and only if, $y<z_0 a_1/a_2$. Therefore, we define the antibody threshold value $y_0$ as
\begin{equation}
  \label{eq:5}
  \begin{aligned}
    y_0 := z_0\frac{a_1}{a_2}.
  \end{aligned}
\end{equation}
Then, it holds that the population with viral load $z_0$ enters the infected compartment when its antibody level is in $[0,y_0)$, and leaves the infected compartment (into the recovered compartment) when its antibody level is in $(y_0,+\infty)$. How this inflow and outflow take place will be precisely formulated when we discuss the boundary conditions and characteristic curves below, and is illustrated in Fig.~\ref{fig:IDynamics}; in particular, the condition \eqref{eqns:29} on the coefficients $a_{i}$ will be needed. For the moment we observe only that on the outflow boundary $Z_{\mathrm{out}} := \{(z_0,y) : y> y_0\}$ no condition needs to be imposed.

We remark that this can be interpreted as a (necessary) distinction between infected individuals who are in the beginning stages of infection (the viral load is low, but increasing, and the antibody level is low), and in the later stages of infection (where the viral load is low, but decreasing, and the antibody level is higher). This is analogous to the distinct compartments introduced in earlier works (such as \cite{gulbudak2020immuno,della2023sir}) for infected individuals having increasing or decreasing viral loads. Our model, on the other hand, exhibits this distinction as a consequence of our assumptions and mathematical framework. We also remark that the antibody threshold is not an additional parameter of the system, but is determined according to \eqref{eq:5}.

\subsubsection{Recovered dynamics}
\label{sec:recovered-dynamics}

As discussed above, individuals in the infected compartment reach the detectable viral load threshold $z_0$ at the end of the infective period, having an antibody level $y>y_0$; at this point we assume that they enter the recovered compartment $R$. In the recovery phase, we keep track of the antibody level, but not of the viral load. Thus, $R=R(t,y),$ with $y>y_0$. During recovery, the antibody level remains above $y_0$, and we suppose that this prevents reinfection. This is consistent with the description of $y_0$ in the previous discussion. The antibody has a natural decay rate $-a_5$, and when the antibody level of an individual reaches $y_0$ (necessarily from above), they leave the recovered compartment and enter the susceptible compartment. In this way, the recovered population should be a solution of the advection equation
\begin{equation*}
  \begin{aligned}
    \frac{\partial }{\partial t} R(t,y) + \frac{\partial }{\partial y} \big( -a_5 y R(t,y)\big) = F_{R},
  \end{aligned}
\end{equation*}
where the source term $F_{R}$ accounts for the infected individuals at the end of their infective period who reach the viral load threshold $z_0$, and thus enter the recovered compartment.

The quantity $F_{R}$ is computed from the outgoing flux of the infected population along the boundary $Z_{\mathrm{out}} = \{(z_0,y) : y> y_0\}$ as:
\begin{equation}
  \label{eqns:4}
  F_{R} = -(V_z \, I)_{|_{Z_{\mathrm{out}}}} = -I(t,z_0, y) \big( a_1 z_0 - a_2 y \big), 
\end{equation}
giving
\begin{equation}
  \label{eq:6}
  \begin{aligned}
    \frac{\partial }{\partial t} R(t,y) - \frac{\partial }{\partial y} \big(a_5 y R(t,y)\big) = -I(t,z_0, y) \big( a_1 z_0 - a_2 y \big), \qquad y \in [y_0,+\infty),
  \end{aligned}
\end{equation}
for the evolution of the recovered compartment. Note that the right-hand side is nonnegative, from \eqref{eq:5} and $y>y_0$.
As the flux in \eqref{eq:6} is outgoing at $y=y_0$, there is no need to impose a boundary condition there.

\subsection{Susceptible and exposed dynamics}
\label{sec:susc-expos-dynam}

The inflow into the susceptible compartment should correspond to the outgoing flux of the recovered population (with the opposite sign). According to \eqref{eq:6}, the outflow at $y=y_0$ is $-a_5 y_0 R(t,y_0)$. On the other hand, individuals leave the susceptible compartment (and enter the exposed compartment) due to infection, which may occur when there is contact between the susceptible individual and an infective mosquito vector, whose quantity is $I_v(t)$. This is modulated by the \emph{biting rate} $b,$ which has units of (time)$^{-1}$, and the \emph{vector-host infection probability} $\beta_{vh}$. We suppose the vector and host populations are well-mixed, and so infection occurs through a mass-action law. This description yields the equation
\begin{equation}
  \label{eq:7}
  \dot S(t) = - b\beta_{vh} I_v(t) \frac{S(t)}{N_h} + a_5 y_0 R(t,y_0), \qquad t\ge 0
\end{equation}
for the susceptible compartment.

Similarly, the exposed (host) compartment gains the population which leaves the susceptible compartment, and in turn loses individuals to the infected compartment after an incubation period $\tau_h$. Thus, the equation for the exposed compartment is
\begin{equation}
  \label{eq:8}
  \dot E(t) = b \beta_{vh} I_v(t) \frac{S(t)}{N_h} - \frac{1}{\tau_h}E(t), \qquad t\ge 0.
\end{equation}

\subsection{From exposed to infected host}
\label{sec:from-expos-infect}

To describe the host dynamics, it remains only to show how exposed hosts $E(t)$ enter into the infected host compartment $I(t,z,y)$. From \eqref{eq:8}, exposed hosts leave the exposed compartment according to the term with $\frac{1}{\tau_h}E(t)$, and so this quantity should flow into the infected compartment. As mentioned in the discussion in Section~\ref{sec:within-host-dynamics}, the introduction of newly infective individuals into the $I$ compartment is represented by an inflow term into the domain $\{ (z,y) : z>z_0, y>0\}$, through the inflow boundary $Z_{\mathrm{in}}:= \{(z_0,y) : 0\le y\le y_0\}.$ This amounts to setting a boundary condition for the equation \eqref{eq:4} along $Z_{\mathrm{in}}$.

To characterize this boundary condition, we define a probability function $g:[0,y_0] \to \RR$, governing the antibody level distribution of the inflowing infective population. Then, we set $I(t,z=z_0,y\in[0,y_0]) = \frac{C}{\tau_h}E(t)  g(y)$, where the scaling factor $C$ is chosen to ensure that the total mass of the system is conserved. It turns out (as we will see in Prop.~\ref{prop:cons} below) that $C$ must be $(a_1 z_0 - a_2 y^\star)^{-1}$, where
\begin{equation}
  \label{eqns:y_star}
  y^\star = \int_{0}^{y_0} yg(y) \,dy \in (0,y_0)
\end{equation}
(thus, $y^\star$ is the expected value of the probability function $g$). Therefore, we set
\begin{equation}
  \label{eq:9}
  I(t,z=z_0, y \in [0,y_0] ) = \frac{ E(t) g(y) }{(a_1 z_0 - a_2 y^\star)\tau_h}
\end{equation}
as the boundary condition on the inflow boundary $Z_{\mathrm{in}}$ of $I(t,z,y)$. The boundary condition on $\{(z>z_0,y=0)\}$ is set to zero, and as mentioned in Section~\ref{sec:within-host-dynamics}, the boundary $\{ (z_0, y> y_0)\}$ is always an outflow boundary, and as such does not require a boundary condition.

\subsection{Dynamics of the vector population}
\label{sec:dynamics-vector}

We will model the vector population by directly considering only their exposed and infected compartments, respectively $E_v(t)$ and $I_v(t)$. Indeed, we suppose that the susceptible vector population, $S_v(t)$, is replenished in such a way that the total vector population remains a constant $N_v$, even in the presence of vector mortality. This allows us to write $S_v = N_v-E_v-I_v$, and so we should have
\begin{equation*}
  \left\{
    \begin{aligned}
      & \dot E_v(t) = b \frac{I_h(t)}{N_h} S_v(t) -\frac{1}{\tau_v}E_v(t) - {\mu_v} E_v(t),
      \\
      & \dot I_v(t) = \frac{1}{\tau_v} E_v(t) - {\mu_v} I_v(t),
    \end{aligned}\right.
\end{equation*}
where $b$ is again the biting rate, and $I_h(t)$ represents the total quantity of infected hosts, multiplied by the probability of host-vector transmission. More precisely,
we can use $I(t,z,y)$ from the equation \eqref{eq:4} to express $I_h(t)$ as
\begin{equation*}
  I_h(t) := \int_{0}^{\infty}\!\!\int_{z_0}^{\infty} I(t,z,y) \beta_{hv}(z) \,dz dy,
\end{equation*}
where the function $\beta_{hv}(z)$ is the \emph{host-vector infection probability}, which naturally depends on the viral load $z$.

The vector dynamics are then given by
\begin{equation}
  \label{eq:10}
  \left\{
    \begin{aligned}
      & \dot E_v(t) = \frac{b}{N_h} \int_{0}^{\infty}\!\!\int_{z_0}^{\infty} I(t,z,y) \beta_{hv}(z) \,dz dy  \,\big( N_v - E_v(t) - I_v(t)\big) -\frac{1}{\tau_v}E_v(t) - {\mu_v} E_v(t),
      \\
      & \dot I_v(t) = \frac{1}{\tau_v} E_v(t) - {\mu_v} I_v(t), \qquad t\ge 0.
    \end{aligned}\right.
\end{equation}

\subsection{Full model}
\label{sec:full-model}

Our full model consists of the equations \eqref{eq:4},\eqref{eq:6},\eqref{eq:7},\eqref{eq:9} and \eqref{eq:10}. Since the total host and vector populations are constant, we will consider the proportions
\begin{equation*}
  \overline S = \frac{S}{N_h}, \quad \overline E = \frac{E}{N_h}, \quad \overline I = \frac{I}{N_h}, \quad \overline R = \frac{R}{N_h},
% \end{equation*}
% \begin{equation*}
  \overline E_v = \frac{E_v}{N_v}, \quad \overline I_v = \frac{I_v}{N_v}, 
\end{equation*}
and drop the bars from the notation. This way, setting
\begin{equation*}
  m:= \frac{N_v}{N_h},
\end{equation*}
we obtain the system
\begin{equation}\label{eqns:FullModel}
  \left\{
    \begin{aligned} 
      & \frac{\partial I}{\partial t} (t,z,y) + \frac{\partial }{\partial z} \big(( a_1 z - a_2 y) I(t,z,y)  \big)
       + \frac{\partial }{\partial y} \big( (-a_3 y  + a_4 z ) I(t,z,y)  \big) = 0, \qquad z \ge z_0, y\ge 0,
      \\
      & \frac{\partial R}{\partial t} (t,y) - \frac{\partial }{\partial y} \left( a_5 y R(t,y) \right)  = - I(t,z_0, y) \bigl( a_1 z_0 - a_2 y \bigr), \qquad y \ge y_0,
      \\
      & \dot{S}(t) = -b\beta_{vh} m I_v(t) {S(t)} + a_5y_0 R(t,y_0),
      \\
      & \dot{E}(t) = b \beta_{vh} m I_v(t) {S(t)} - \frac{1}{\tau_h}E(t),
      \\
      & \dot{E}_v(t) =  b \int_0^{\infty}\!\!\int_{z_0}^{\infty} {I(t,z,y)} \beta_{hv} (z) \,dz dy   \,\big( 1 - E_v(t) - I_v(t) \big)
       - \Big(\frac{1}{\tau_v} + \mu_v\Big) E_v(t), 
      \\
      & \dot{I}_v(t) = \frac{1}{\tau_v} E_v(t)- \mu_v I_v(t),
    \end{aligned}\right.
\end{equation}
along with the boundary conditions
\begin{equation}
  \label{eqns:BCI}
  \left\{
  \begin{aligned}
    &I(t,z=z_0, y \in [0,y_0] ) = \frac{ E(t) g(y) }{(a_1 z_0 - a_2 y^\star)\tau_h} ,
    \\
    % &I(t,z \rightarrow + \infty, y) = 0 ,
    % \\
    % &I(t,z , y \rightarrow + \infty) = 0 ,
    % \\
    &I(t,z >z_0, y = 0) = 0
  \end{aligned}\right.
\end{equation}
where $y^\star$ is given by \eqref{eqns:y_star}, and the initial conditions (all nonnegative)
\begin{equation}
  \label{eqns:ini}
  \left\{
    \begin{aligned}
      &I(0,z,y) = I_0(z,y), \, z\ge z_0, y\ge 0, \quad R(0,y) = {R}_0(y),\, y\ge y_0,
      \\
      & S(0) = S_0, \quad E(0)=E_0, \quad E_v(0) = E_{v0}, \quad I_v(0) = I_{v0}.
    \end{aligned}\right.
\end{equation}
Since our model quantities are proportions of the total population, we assume moreover that
\begin{equation}
  \label{eqns:28}
  \begin{aligned}
    &S_0 + E_0 + \int_{z_0}^{\infty}\!\!\int_{0}^{\infty} I_0(z,y) \,dydz + \int_{y_0}^{\infty} R_0(y) \,dy = 1,
    \\
    &I_{v0} + E_{v0} \le 1.
  \end{aligned}
\end{equation}

We also suppose that $R_0(y)$ and $I_0(z,y)$ are integrable on their domains and that $R(t, y \rightarrow + \infty) = 0$, $I(t,z,y) \to 0$ as $z^2+y^2 \to \infty$.

% \begin{equation}
%   \label{eqns:BCR}
%   R(t, y \rightarrow + \infty) = 0 .
% \end{equation}

%

The variables and parameters present in our model are specified in Table \ref{tab:ModelParameters}. %\tau_v$ and $\tau_h$ are the extrinsic (within the vector) and intrinsic (within the host) incubation period of the pathogen, respectively. %These periods are approximately 7 days for $\tau_h$ \cite{CDCwebsite,xu2020defervescent}. 

\begin{table}[htbp]
  \small
  \centering
  \caption{Variables and parameters in the system \eqref{eqns:FullModel}.}
  \begin{tabular}{lllll}
    \hline
    Notation & Units & Definition & References  \\ 
    \hline  
    $t$  & $t$ &  Time  & -  \\
    $z$ & $z$ &  Viral load & \cite{laue1999detection,nguyen2013host,xu2020defervescent} \\
    $y$ & $y$  &   Antibody level & \cite{assir2014concurrent}\\[2pt] \hline
    Variables &  & & \\ \hline
    $I(t,z,y)$ & -- & Infected host density w.r.t.~viral load and antibody titer & \\
    $R(t,y)$ & -- & Recovered host density w.r.t.~antibody titer & \\
    $S(t)$ & -- & Susceptible host proportion & \\
    $E(t)$ & -- & Exposed host proportion & \\
    $E_v(t)$ & -- & Exposed vector proportion & \\
    $I_v(t)$ & -- & Infected vector proportion & \\ \hline
    Parameters  &  &  & \\ \hline
    $N_h,N_v$ & number & Population number of hosts and vectors & \\
    $m$ & -- & $N_v/N_h$ & \\
    $z_0$ & $z$ &  Minimum detectable viral load ($z_0 > 0$) & \cite{xu2020defervescent}\\
    $y_0$ & $y$ &  Antibody threshold value, $y_0 =z_0 a_1/a_2$ & \eqref{eq:5} \\
    $a_1$ & $t^{-1}$ &  Virus growth rate in host & -- \\
    $a_2$ & $z(ty)^{-1}$ &  Viral load decay rate per unit of antibody  & Fig.~S6, \cite{nguyen2013host} \\ %0.60
    $a_3$ & $t^{-1}$ &  Antibody decay rate on infected host & \cite{sebayang2021modeling}  \\
    $a_4$ & $y(tz)^{-1}$ &  Antibody production rate per virus load unit  &  \cite{sebayang2021modeling} \\
    $a_5$  & $t^{-1}$ &  Antibody titer decay rate on recovered host & --  \\
    $b$  & $t^{-1}$  & Mosquito biting rate  & \cite{aguiar2022mathematical} \\
    $\beta_{vh}$ & -- & Vector to host transmission probability & \\ % 
    $\mu_v$ & $t^{-1}$ &  Natural vector death rate & \cite{massad2010estimation} \\
    $\beta_{hv}(z)$ & -- &  Host to vector transmission probability &  \cite{nguyen2013host} \\
    $g(y)$ & -- &  Exposed host to infected host distribution function &  -- \\
    $y^\star$ & $y$ & Mean antibody titer of recently infected hosts & -- \\
    % probability distribution function
    $\tau_v$ & $t$ &   Virus incubation period in the vector & \cite{nguyen2013host,CDCwebsite} \\ %\href{https://www.cdc.gov/dengue/training/cme/ccm/page45915.html}{(cf. CDC)}  \\
    $\tau_h$ & $t$ &   Virus incubation period in the host & \cite{massad2010estimation,CDCwebsite,xu2020defervescent} \\ %\href{https://www.cdc.gov/dengue/training/cme/ccm/page45915.html}{(cf. CDC)}  \\
    \hline
  \end{tabular}
  \label{tab:ModelParameters} 
\end{table}
The interactions between the host and vector compartments during the dengue infection process, as described in our model, are illustrated schematically in Fig.~\ref{fig:CompartmentalModelDiagram_with_Parameters}.

% apenas taxas no diagrama
\begin{figure}[ht]
  \centering
  
  \tikzset{every picture/.style={line width=0.75pt}} %set default line width to 0.75pt        

  \begin{tikzpicture}[x=0.75pt,y=0.75pt,yscale=-1,xscale=1]
    % uncomment if required: \path (0,300); %set diagram left start at 0, and has height of 300

    % Rounded Rect [id:dp694505201454906] 
    \draw   (315,63.5) .. controls (315,59.08) and (318.58,55.5) .. (323,55.5) -- (375,55.5) .. controls (379.42,55.5) and (383,59.08) .. (383,63.5) -- (383,87.5) .. controls (383,91.92) and (379.42,95.5) .. (375,95.5) -- (323,95.5) .. controls (318.58,95.5) and (315,91.92) .. (315,87.5) -- cycle ;
    % Rounded Rect [id:dp1251153923817956] 
    \draw   (439,63) .. controls (439,58.58) and (442.58,55) .. (447,55) -- (501,55) .. controls (505.42,55) and (509,58.58) .. (509,63) -- (509,87) .. controls (509,91.42) and (505.42,95) .. (501,95) -- (447,95) .. controls (442.58,95) and (439,91.42) .. (439,87) -- cycle ;
    % Straight Lines [id:da06515270535790196] 
    \draw    (383,75) -- (436,75) ;
    \draw [shift={(439,75)}, rotate = 180] [fill={rgb, 255:red, 0; green, 0; blue, 0 }  ][line width=0.08]  [draw opacity=0] (8.93,-4.29) -- (0,0) -- (8.93,4.29) -- cycle    ;
    % Rounded Rect [id:dp08076574095934252] 
    \draw   (255,164) .. controls (255,159.58) and (258.58,156) .. (263,156) -- (315,156) .. controls (319.42,156) and (323,159.58) .. (323,164) -- (323,188) .. controls (323,192.42) and (319.42,196) .. (315,196) -- (263,196) .. controls (258.58,196) and (255,192.42) .. (255,188) -- cycle ;
    % Rounded Rect [id:dp8093021862126111] 
    \draw   (379,164) .. controls (379,159.58) and (382.58,156) .. (387,156) -- (441,156) .. controls (445.42,156) and (449,159.58) .. (449,164) -- (449,188) .. controls (449,192.42) and (445.42,196) .. (441,196) -- (387,196) .. controls (382.58,196) and (379,192.42) .. (379,188) -- cycle ;
    % Straight Lines [id:da5495366264030856] 
    \draw    (323,176) -- (376,176) ;
    \draw [shift={(379,176)}, rotate = 180] [fill={rgb, 255:red, 0; green, 0; blue, 0 }  ][line width=0.08]  [draw opacity=0] (8.93,-4.29) -- (0,0) -- (8.93,4.29) -- cycle    ;
    % Rounded Rect [id:dp5570221902328516] 
    \draw   (131.5,164.5) .. controls (131.5,160.08) and (135.08,156.5) .. (139.5,156.5) -- (193.5,156.5) .. controls (197.92,156.5) and (201.5,160.08) .. (201.5,164.5) -- (201.5,188.5) .. controls (201.5,192.92) and (197.92,196.5) .. (193.5,196.5) -- (139.5,196.5) .. controls (135.08,196.5) and (131.5,192.92) .. (131.5,188.5) -- cycle ;
    % Straight Lines [id:da4500399162111457] 
    \draw    (200,176) -- (252,176) ;
    \draw [shift={(255,176)}, rotate = 180] [fill={rgb, 255:red, 0; green, 0; blue, 0 }  ][line width=0.08]  [draw opacity=0] (8.93,-4.29) -- (0,0) -- (8.93,4.29) -- cycle    ;
    % Straight Lines [id:da6594412724908458] 
    \draw    (451,176) -- (503,176) ;
    \draw [shift={(506,176)}, rotate = 180] [fill={rgb, 255:red, 0; green, 0; blue, 0 }  ][line width=0.08]  [draw opacity=0] (8.93,-4.29) -- (0,0) -- (8.93,4.29) -- cycle    ;
    % Rounded Rect [id:dp6982448590655768] 
    \draw   (507,164) .. controls (507,159.58) and (510.58,156) .. (515,156) -- (569,156) .. controls (573.42,156) and (577,159.58) .. (577,164) -- (577,188) .. controls (577,192.42) and (573.42,196) .. (569,196) -- (515,196) .. controls (510.58,196) and (507,192.42) .. (507,188) -- cycle ;
    % Straight Lines [id:da7661501011815295] 
    \draw    (541.5,221.25) -- (166.5,221.25) ;
    % Straight Lines [id:da9071522498731999] 
    \draw    (541.5,196) -- (541.5,220.75) ;
    % Straight Lines [id:da4506895986728099] 
    \draw    (166.5,199.5) -- (166.5,221.25) ;
    \draw [shift={(166.5,196.5)}, rotate = 90] [fill={rgb, 255:red, 0; green, 0; blue, 0 }  ][line width=0.08]  [draw opacity=0] (8.93,-4.29) -- (0,0) -- (8.93,4.29) -- cycle    ;
    % Straight Lines [id:da02816279066363614] 
    \draw  [dash pattern={on 4.5pt off 4.5pt}]  (287,130.33) -- (411.33,130.33) ;
    % Straight Lines [id:da32627405590621006] 
    \draw  [dash pattern={on 4.5pt off 4.5pt}]  (472.5,119.67) -- (227.5,120.33) ;
    % Rounded Rect [id:dp849247521654406] 
    \draw   (192.5,64) .. controls (192.5,59.58) and (196.08,56) .. (200.5,56) -- (252.5,56) .. controls (256.92,56) and (260.5,59.58) .. (260.5,64) -- (260.5,88) .. controls (260.5,92.42) and (256.92,96) .. (252.5,96) -- (200.5,96) .. controls (196.08,96) and (192.5,92.42) .. (192.5,88) -- cycle ;
    % Straight Lines [id:da3240747265957795] 
    \draw    (260.5,75.5) -- (312,75.5) ;
    \draw [shift={(315,75.5)}, rotate = 180] [fill={rgb, 255:red, 0; green, 0; blue, 0 }  ][line width=0.08]  [draw opacity=0] (8.93,-4.29) -- (0,0) -- (8.93,4.29) -- cycle    ;
    % Straight Lines [id:da02235812607932819] 
    \draw    (136,76) -- (189,76) ;
    \draw [shift={(192,76)}, rotate = 180] [fill={rgb, 255:red, 0; green, 0; blue, 0 }  ][line width=0.08]  [draw opacity=0] (8.93,-4.29) -- (0,0) -- (8.93,4.29) -- cycle    ;
    % Straight Lines [id:da35627723321359683] 
    \draw    (226.5,56) -- (226.5,26.25) ;
    \draw [shift={(226.5,23.25)}, rotate = 90] [fill={rgb, 255:red, 0; green, 0; blue, 0 }  ][line width=0.08]  [draw opacity=0] (8.93,-4.29) -- (0,0) -- (8.93,4.29) -- cycle    ;
    % Straight Lines [id:da6165810540948216] 
    \draw    (350.5,55.5) -- (350.5,25.75) ;
    \draw [shift={(350.5,22.75)}, rotate = 90] [fill={rgb, 255:red, 0; green, 0; blue, 0 }  ][line width=0.08]  [draw opacity=0] (8.93,-4.29) -- (0,0) -- (8.93,4.29) -- cycle    ;
    % Straight Lines [id:da19424342705532016] 
    \draw    (474,55) -- (474,26.25) ;
    \draw [shift={(474,23.25)}, rotate = 90] [fill={rgb, 255:red, 0; green, 0; blue, 0 }  ][line width=0.08]  [draw opacity=0] (8.93,-4.29) -- (0,0) -- (8.93,4.29) -- cycle    ;
    % Straight Lines [id:da5634526405297302] 
    \draw  [dash pattern={on 4.5pt off 4.5pt}]  (473.67,95) -- (473.67,119.67) ;
    % Straight Lines [id:da6397037433110488] 
    \draw  [dash pattern={on 4.5pt off 4.5pt}]  (227.5,120.33) -- (227.5,173) ;
    \draw [shift={(227.5,176)}, rotate = 270] [fill={rgb, 255:red, 0; green, 0; blue, 0 }  ][line width=0.08]  [draw opacity=0] (8.93,-4.29) -- (0,0) -- (8.93,4.29) -- cycle    ;
    % Straight Lines [id:da26154917328603] 
    \draw  [dash pattern={on 4.5pt off 4.5pt}]  (413.33,130.33) -- (413.33,155) ;
    % Straight Lines [id:da15860988946666454] 
    \draw  [dash pattern={on 4.5pt off 4.5pt}]  (287,79.33) -- (287,130.33) ;
    \draw [shift={(287,76.33)}, rotate = 90] [fill={rgb, 255:red, 0; green, 0; blue, 0 }  ][line width=0.08]  [draw opacity=0] (8.93,-4.29) -- (0,0) -- (8.93,4.29) -- cycle    ;

    % Text Node
    \draw (340,66.9) node [anchor=north west][inner sep=0.75pt]    {$E_{v}$};
    % Text Node
    \draw (466.5,66.9) node [anchor=north west][inner sep=0.75pt]    {$I_{v}$};
    % Text Node
    \draw (161,167.9) node [anchor=north west][inner sep=0.75pt]    {$S$};
    % Text Node
    \draw (282.5,168.9) node [anchor=north west][inner sep=0.75pt]    {$E$};
    % Text Node
    \draw (407.5,168.4) node [anchor=north west][inner sep=0.75pt]    {$I$};
    % Text Node
    \draw (535.5,168.9) node [anchor=north west][inner sep=0.75pt]    {$R$};
    % Text Node
    \draw (217.5,67.4) node [anchor=north west][inner sep=0.75pt]    {$S_{v}$};
    % Text Node
    \draw (150,55.4) node [anchor=north west][inner sep=0.75pt]    {$\mu _{v}$};
    % Text Node
    \draw (229,16.4) node [anchor=north west][inner sep=0.75pt]    {$\mu _{v}$};
    % Text Node
    \draw (354,15.9) node [anchor=north west][inner sep=0.75pt]    {$\mu _{v}$};
    % Text Node
    \draw (476.5,15.4) node [anchor=north west][inner sep=0.75pt]    {$\mu _{v}$};
    % Text Node
    \draw (211.5,134.57) node [anchor=north west][inner sep=0.75pt]    {$b$};
    % Text Node
    \draw (328.5,177.9) node [anchor=north west][inner sep=0.75pt]    {$1/\tau _{h}$};
    % Text Node
    \draw (388,54.9) node [anchor=north west][inner sep=0.75pt]    {$1/\tau _{v}$};
    % Text Node
    \draw (287,97) node [anchor=north west][inner sep=0.75pt]    {$\beta_{hv} (z)$};

  \end{tikzpicture}

  \caption{The schematic diagram of our host-vector model. The parameters are summarized in Table~\ref{tab:ModelParameters}.}
  \label{fig:CompartmentalModelDiagram_with_Parameters}
\end{figure}

\section{Basic properties}
\label{sec:properties}

\subsection{Characteristic curves}
\label{subsec:Characteristic}

The characteristic curves of the transport equation of the infected hosts \eqref{eqns:FullModel}
describe the trajectories of individuals who enter the infected compartment through the inflow boundary, $Z_{\mathrm{in}}= \{(z_0,y) : 0 \le y < y_0\},$ according to the probability distribution $g(y)$, $y\in[0,y_0)$ (see \eqref{eqns:BCI}). These characteristics are the solutions to the following (linear) system of ODEs,
\begin{equation}\label{eqns:ODE}
  \begin{cases}
    \dot{z}= {a_1} z - {a_2} y \\
    \dot{y} = {a_4} z - {a_3} y, 
  \end{cases}
\end{equation}
for $z(0) =z_0$, $y(0) = y' \in [0,y_0)$.

Note that on $Z_{\mathrm{in}}$, we have $\dot z >0$, while on the outflow boundary $ Z_{\mathrm{out}}$ we have $\dot z<0$.% $\{(z=z_0, y>y_0)\}$ we have $\dot z<0$. 
The eigenvalues of the matrix associated to system \eqref{eqns:ODE} are
\begin{equation}
  \label{eqns:eigenvalues}
  \begin{aligned}
    \lambda_{1,2} = \frac{1}{2}\Big(a_1-a_3 \pm \sqrt{(a_1+a_3)^2 - 4a_2a_4}\Big).
  \end{aligned}
\end{equation}
We are interested in the situation where the within-host dynamics are described by an antibody response to the increase in viral load. As described earlier, we expect that the antibody titer should increase while the viral load also initially increases, followed by a decrease in the viral load \cite{CDCwebsite,st2019adaptive}. To achieve this cyclic behavior, it is natural to expect that the eigenvalues of the system \eqref{eqns:ODE} have nonzero imaginary part. Thus, we will suppose throughout the paper that the following assumption holds:
\begin{equation}
  \label{eqns:29}
  4a_2 a_4 > (a_1 + a_3)^2.
\end{equation}
The assumption \eqref{eqns:29} implies, in particular, that the matrix of the system \eqref{eqns:ODE} is nonsingular.

Under the assumption \eqref{eqns:29}, solving \eqref{eqns:ODE}, a standard computation gives the characteristic curves originating from any point $(z_0,y')$, with $y'\in[0,y_0)$:
\begin{equation} 
    \left\{\begin{aligned}
        &z(\tau,y') = e^{\frac{a_1- a_3}{2}\tau} \Big( z_0 \cos{\beta \tau} + \Big(z_0 \frac{a_1+a_3}{2 \beta} - \frac{{y}'  a_2}{\beta }\Big) \sin{\beta \tau} \Big),
        \\
        &y(\tau,y') =  e^{\frac{a_1- a_3}{2}\tau} \Big( {y}' \cos{\beta \tau} +\Big( \frac{ a_4 z_0}{\beta} - {y}' \frac{a_1+ a_3}{2 \beta}\Big)\sin{\beta \tau} \Big),
    \end{aligned}\right.
    \label{eqns:characteristic}
\end{equation}
where
\begin{equation}
  \label{eqns:beta}
  \beta = \frac{1}{2}\sqrt{4 a_2 a_4 - (a_1+a_3)^2 } \in \mathbb{R}.
\end{equation}

Let us define the set $A,$ which will be the domain in $(z,y)$ of the infective compartment $I(t,z,y)$ for each $t$:
\begin{equation}\label{def:A}
\begin{aligned}
    &\text{$A$ is the open set
 bounded by the characteristic curve originating}\\
 &\text{from $(z_0,0)$, and the line $\{z=z_0\}$.}
\end{aligned}
\end{equation}
See Fig.~\ref{fig:Icharacterists}. For each $(z,y)\in A$, there exists a unique pair $(\tau,y') = (\tau(z,y),y'(z,y))$, with $\tau>0$ and $y'\in[0,y_0)$, such that the characteristic originating from $(z_0,y')$ reaches $(z,y)$ after time $\tau$.

For our purposes, we see the solutions of \eqref{eqns:characteristic} as restricted to the domain $A$. A typical example is represented in Fig.~\ref{fig:Icharacterists}.

\begin{figure}[ht!]
    \centering
    \includegraphics[width=0.5\linewidth]{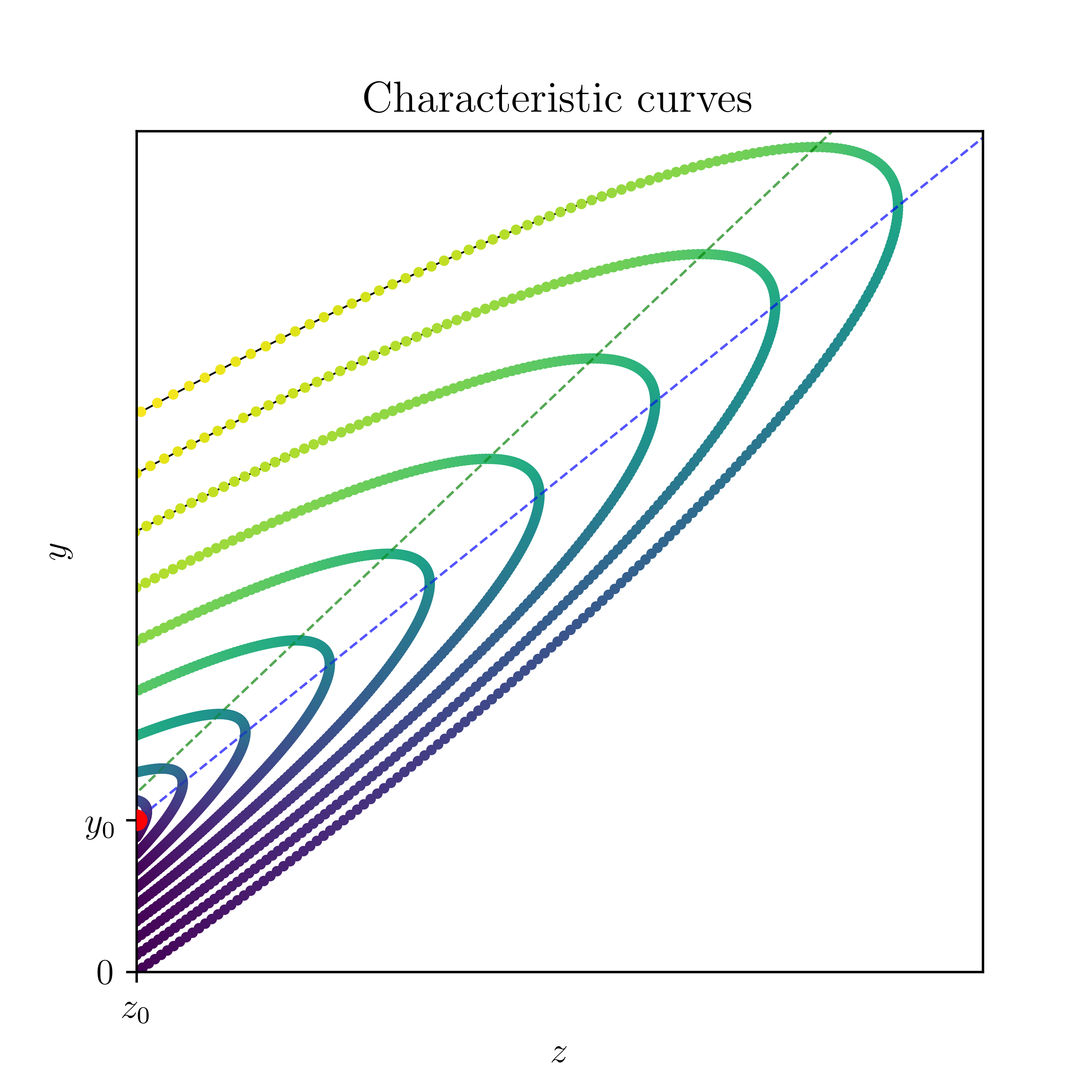}
    \caption{Typical phase portrait for the characteristic curves \eqref{eqns:ODE} of the transport equation for the infected hosts $I$, in the domain $[z_0,\infty) \times [0,\infty)$. The coloring indicates the time evolution along each characteristic. The dashed lines are the nullclines of the system.}
    \label{fig:Icharacterists}
\end{figure}

\subsection{Infective permanence time}

Infected host individuals who enter the $I$ compartment at some time $t$, with an antibody level $y' \in[0,y_0)$, will leave the $I$ compartment into the $R$ (recovered) compartment when some time $\tau_1 = \tau_1(y')$ has elapsed. This \emph{permanence time} in the $I$ compartment is the time elapsed along the characteristic curve \eqref{eqns:characteristic} originating from the inflow boundary $Z_{\mathrm{in}}$ until it reaches the outflow boundary $Z_{\mathrm{out}}.$ 
So, $\tau_1$ is defined as the smallest $\tau \in (0, +\infty)$, such that the characteristic curve $(z(\tau,y'),y(\tau,y'))$ originating from $(z_0,y')$ reaches a point $(z_0,y^+)$ with $y^+>y_0$, see Fig.~\ref{fig:IDynamics} for an illustration. Then, $\tau_1$ solves the system (see \eqref{eqns:characteristic})
\begin{equation} \label{eqns:tau1Characteristics}
    \left\{\begin{aligned}
        &z_0 = e^{\frac{a_1- a_3}{2}\tau_1} \Big( z_0 \cos{\beta \tau_1} + \Big(z_0 \frac{a_1+ a_3}{2 \beta} - \frac{{y}'  a_2}{\beta }\Big) \sin{\beta \tau_1} \Big),
        \\
        & y^+ =  e^{\frac{a_1- a_3}{2}\tau_1} \Big( {y}' \cos{\beta \tau_1} +\Big( \frac{ a_4 z_0}{\beta} - {y}' \frac{a_1+ a_3}{2 \beta}\Big)\sin{\beta \tau_1} \Big).
    \end{aligned}\right.
\end{equation}
More precisely, we have the following result:
\begin{proposition}\label{prop:y}
  Let $z_0>0$ be given, let $y_0=z_0a_1/a_2$, and let $\beta$ be defined by \eqref{eqns:beta}. For each $y'\in[0,y_0)$, there exist a unique $y^+ = y^+(y') > y_0$ and a minimal $\tau_1 = \tau_1(y')>0$ such that \eqref{eqns:tau1Characteristics} holds. $\tau_1$ is the time that an infected individual with antibody level $y'$ at the onset of infection remains in the infected compartment before transitioning to the recovered compartment. Additionally, $\tau_1$ does not depend on the minimum detectable viral load $z_0$.
\end{proposition}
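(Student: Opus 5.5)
We work with the explicit characteristic formula \eqref{eqns:characteristic} and first eliminate $z_0$ by scaling. Because \eqref{eqns:ODE} is linear, we can write $y'=s\,y_0=s\,z_0a_1/a_2$ with $s\in[0,1)$. Then $z(\tau,y')=z_0\,\zeta(\tau,s)$ and $y(\tau,y')=z_0\,\eta(\tau,s)$, where
\begin{equation*}
\zeta(\tau,s)=e^{\frac{a_1-a_3}{2}\tau}\Big(\cos\beta\tau+\frac{a_1+a_3-2a_1 s}{2\beta}\sin\beta\tau\Big)
\end{equation*}
and $\eta$ is defined analogously. The first equation of \eqref{eqns:tau1Characteristics} then reads $\zeta(\tau_1,s)=1$, which does not involve $z_0$. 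Accordingly, $\tau_1$ depends only on $s=y'/y_0$ and on the $a_i$. This gives the last claim once existence and minimality are settled. It also gives $y^+=z_0\eta(\tau_1,s)$, and $y^+$ is determined uniquely by $\tau_1$ through the second equation.

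For existence, we set $h(\tau)=\zeta(\tau,s)-1$. Then $h(0)=0$, and $h'(0)=\dot z(0)/z_0=a_1-a_2y'/z_0=a_1(1-s)>0$, because $(z_0,y')$ lies on the inflow part $Z_{\mathrm{in}}$. Hence $h>0$ on some interval $(0,\varepsilon)$. By \eqref{eqns:29} we have $\beta>0$, so $\zeta$ is an exponential times a nontrivial sinusoid $\rho\cos(\beta\tau-\phi)$ and vanishes at some $\tau\in(0,\pi/\beta]$. Therefore $h$ equals $-1$ there. By the intermediate value theorem $h$ has a zero in $(0,\pi/\beta)$. The zero set is closed and bounded away from $0$, since $h>0$ near $0^+$. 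We define $\tau_1$ as the smallest positive zero.

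It remains to show $y^+>y_0$. At $\tau_1$ the curve crosses $z=z_0$ from $z>z_0$ into $z\le z_0$, so $\dot z(\tau_1)\le 0$. This means $a_1z_0-a_2y^+\le0$, that is, $y^+\ge y_0$. To get strict inequality, we rule out $\dot z(\tau_1)=0$ with $y^+=y_0$. If that happened, then $\ddot z=a_1\dot z-a_2\dot y=-a_2(a_4z_0-a_3y_0)$. This quantity is negative provided $a_4z_0>a_3y_0$. That inequality is $a_2a_4>a_1a_3$, which follows from \eqref{eqns:29} via $(a_1+a_3)^2\ge 4a_1a_3$. So $z$ would have a strict local maximum equal to $z_0$ at $\tau_1$. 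Then $z<z_0$ just before $\tau_1$, and by continuity there would be an earlier crossing of $z=z_0$, contradicting minimality. Hence $y^+>y_0$. The interpretation of $\tau_1$ as a permanence time is immediate, since the characteristic stays in $\{z>z_0\}$ on $(0,\tau_1)$.

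The main obstacle is this last strictness step, namely excluding tangency to $z=z_0$ at the exit point, together with checking $y^+\ge0$ if needed. The plan is to settle it by the second-derivative argument above. If that turns out to be delicate, the fallback is an argument that distinct characteristics cannot meet, so they cannot touch the point $(z_0,y_0)$ from inside $A$.
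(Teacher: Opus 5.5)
Your argument is correct, but it reaches the exit point by a different route than the paper.

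The paper argues in the plane. It takes $B=\{z>z_0,\ y>0\}$ and reads off from the direction of $(V_z,V_y)$ on $\partial B$ that characteristics can leave $B$ only through $\{z=z_0,\ y\ge y_0\}$. It then notes that at $\tau=\pi/\beta$ the characteristic is at $e^{\frac{a_1-a_3}{2}\frac{\pi}{\beta}}(-z_0,-y')$, which lies outside $B$.

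You instead reduce to the scalar function $h(\tau)=\zeta(\tau,s)-1$. You use the same half-period fact via the intermediate value theorem, define $\tau_1$ as the first positive zero of $h$, and get $y^+\ge y_0$ from $\dot z(\tau_1)\le 0$.

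What your route buys is the strict inequality. The paper passes from ``leaves through $\{z=z_0,\ y\ge y_0\}$'' to ``$y^+>y_0$'' without excluding a tangential exit at $(z_0,y_0)$. Your computation $\ddot z(\tau_1)=-z_0(a_2a_4-a_1a_3)<0$, using $4a_2a_4>(a_1+a_3)^2\ge 4a_1a_3$, settles this, so the fallback you mention is not needed.

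What the paper's route buys is that the characteristic stays in $\{y>0\}$ on $(0,\tau_1)$. The permanence-time interpretation actually requires this, since the infected domain is $\{z\ge z_0,\ y\ge 0\}$. Your write-up only records $z>z_0$. Add the one-line check that $\dot y=a_4z>0$ on $\{y=0,\ z>0\}$, so $y$ cannot become negative before $\tau_1$.

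The $z_0$-independence step is the same scaling as in the paper. Your coefficient $\frac{a_1+a_3-2a_1s}{2\beta}$ is the correct one; the coefficient printed in \eqref{eqns:taualt} is a misprint, which does not affect the conclusion.
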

\begin{proof} 

Consider the region $B = \{ (z, y) \in \mathbb{R}^2 :  z>z_0,  y >0 \}$. From the characteristic equations \eqref{eqns:characteristic}
we see that the characteristics enter $B$  only along $\{( z, y ): z=z_0 , y \in [0,y_0)\} \cup \{( z, y) :  z \geq z_0 , y = 0 \},$ and may leave $B$ only along $\{( z, y) : z=z_0 , y \geq y_0\}.$ Therefore, since $y'$ is in the inflowing part of $\partial B$ and the characteristic curve is smooth, the result will be proved if we can show that the characteristic \eqref{eqns:characteristic} issuing from $(z_0,y'),$ $y'\in (0,y_0),$ eventually leaves $B$, since in that case it must leave at some point $(z_0,y^+>y_0),$ and, by continuity, after some minimal time $\tau_1>0$. But for $t = \frac{\pi}{\beta}$, we immediately get from \eqref{eqns:characteristic}, and the assumption that $\beta$ is real, that
\begin{equation}
    \left( z({\pi}/{\beta}), y\left({\pi}/{\beta}\right) \right) = e^{\frac{a_1 -a_3}{2}\frac{\pi}{\beta}} (-z_0, -y'),
\end{equation}
which is in the third quadrant, and therefore outside of $B$. Thus, the characteristic curve in question eventually leaves $B$, which determines $y^+$ and $\tau_1$.

We now check 
 that $\tau_1$ does not depend on $z_0$; this follows from the expression \eqref{eqns:tau1Characteristics}. Indeed, only the first equation of \eqref{eqns:tau1Characteristics} is sufficient to determine $\tau_1$, and writing $y'$ as $\alpha y_0 \equiv \alpha z_0 \frac{a_1}{a_2}$ for some $\alpha\in (0,1)$ allows us to write the first equation of \eqref{eqns:tau1Characteristics} equivalently as
    \begin{equation}\label{eqns:taualt}
    1= e^{\frac{a_1- a_3}{2}\tau_1} \Big( \cos{\beta \tau_1} +  \frac{a_3 - \alpha a_1}{2 \beta}  \sin{\beta \tau_1} \Big).
    \end{equation}
    This shows that $\tau_1$ does not depend on the minimum detectable viral load $z_0$.
\end{proof}

\begin{remark}\label{rmk:tau}
     The equation \eqref{eqns:taualt} and the definition of $\beta$, \eqref{eqns:beta}, show that the $\tau_1$ only depends on $a_2,a_4$ through the product $a_2a_4$.
\end{remark}

\begin{figure}[ht]
	\centering

    \tikzset{every picture/.style={line width=0.75pt}} %set default line width to 0.75pt        
    
    \begin{tikzpicture}[x=0.75pt,y=0.75pt,yscale=-0.9,xscale=0.9]
    %uncomment if require: \path (0,300); %set diagram left start at 0, and has height of 300
    
    %Straight Lines [id:da32946670997185645] 
    \draw [color={rgb, 255:red, 2; green, 73; blue, 155 }  ,draw opacity=1 ] [dash pattern={on 4.5pt off 4.5pt}]  (134,180) -- (189.5,180) ;
    %Straight Lines [id:da8393192166349704] 
    \draw [color={rgb, 255:red, 36; green, 114; blue, 18 }  ,draw opacity=1 ] [dash pattern={on 4.5pt off 4.5pt}]  (190,21) -- (190,264) ;
    %Straight Lines [id:da7515571429034656] 
    \draw    (112.2,263.5) -- (348,263.5) ;
    \draw [shift={(350,263.5)}, rotate = 180] [color={rgb, 255:red, 0; green, 0; blue, 0 }  ][line width=0.75]    (10.93,-3.29) .. controls (6.95,-1.4) and (3.31,-0.3) .. (0,0) .. controls (3.31,0.3) and (6.95,1.4) .. (10.93,3.29)   ;
    %Straight Lines [id:da0950061152285161] 
    \draw    (132.5,279.63) -- (132.5,14.12) ;
    \draw [shift={(132.5,12.13)}, rotate = 90] [color={rgb, 255:red, 0; green, 0; blue, 0 }  ][line width=0.75]    (10.93,-3.29) .. controls (6.95,-1.4) and (3.31,-0.3) .. (0,0) .. controls (3.31,0.3) and (6.95,1.4) .. (10.93,3.29)   ;
    %Curve Lines [id:da11085021389239125] 
    \draw [color={rgb, 255:red, 2; green, 73; blue, 155 }  ,draw opacity=1 ]   (189.67,222.17) .. controls (311.39,185.68) and (279.99,30.4) .. (191.01,69.56) ;
    \draw [shift={(189.67,70.17)}, rotate = 335.33] [color={rgb, 255:red, 2; green, 73; blue, 155 }  ,draw opacity=1 ][line width=1.5]    (10.93,-3.29) .. controls (6.95,-1.4) and (3.31,-0.3) .. (0,0) .. controls (3.31,0.3) and (6.95,1.4) .. (10.93,3.29)   ;
    
    % Text Node
    \draw (203,276) node  [color={rgb, 255:red, 36; green, 114; blue, 18 }  ,opacity=1 ]  {$z_{0}$};
    % Text Node
    \draw (118.5,179.5) node  [color={rgb, 255:red, 1; green, 81; blue, 177 }  ,opacity=1 ]  {$y_{0}$};
    % Text Node
    \draw (281.83,130.93) node [anchor=north west][inner sep=0.75pt]    {$I( t,z,y)$};
    % Text Node
    \draw (352,265.9) node [anchor=north west][inner sep=0.75pt]    {$z$};
    % Text Node
    \draw (108,24.4) node [anchor=north west][inner sep=0.75pt]    {$y$};
    % Text Node
    \draw (191.67,225.57) node [anchor=north west][inner sep=0.75pt]    {$\left( t,z_{0} ,y'\right)$};
    % Text Node
    \draw (185,216) node [anchor=north west][inner sep=0.75pt]    {$\star$};
    % Text Node
    \draw (182.7,64) node [anchor=north west][inner sep=0.75pt]    {$+$};
    % Text Node
    \draw (190.8,36.6) node [anchor=north west][inner sep=0.75pt]    {$\left( t+\tau _{1} ,z_{0} ,y^{+}\right)$};

    \end{tikzpicture}

    \caption{Illustration of the microdynamics in the infected hosts along a single characteristic curve.}
    \label{fig:IDynamics}
    
\end{figure}

\subsection{Definition of solution to the transport equation}
The flow of the characteristics provides one way to conveniently define the solution $I$ of the two-dimensional transport equation in \eqref{eqns:FullModel}. Along the characteristic curves, by their own definition, the solution $I(t,z(t),y(t))$ satisfies
\begin{equation}\label{IODE}
    \begin{aligned}
        \frac{d}{dt} I(t,z(t),y(t)) = (a_3-a_1)I(t,z(t),y(t)).
    \end{aligned}
\end{equation}
Therefore, the value of the solution is propagated according to the above ODE. Then, given  $(t,z,y) \in [0,+\infty)\times A,$ the value of $I(t,z,y)$ is found by integration of the ODE \eqref{IODE} from an initial point of the characteristic, either at a point $(t,z_0,y')$ with $(z_0,y')$ on the inflow boundary $Z_{\mathrm{in}}$, or on a point $(0,\overline{z},\overline{y}) \in A.$ In the latter case, the initial data $I_0$ should be taken into account, while in the former case the boundary condition \eqref{eqns:BCI} should be used. We can express this using the flow of the characteristics. Using Prop.~\ref{prop:y}, we see that the flow of the characteristics defines a bijection between $(z,y)\in A$ and $(\tau,y')$ in the set
$$
\{ (\tau,y') : y'\in (0,y_0), \tau \in (0,\tau_1(y'))\}.
$$
Thus, we may write $\tau \equiv \tau(z,y),$ and 
we find
\begin{equation*}
    I(t,z,y) = \left\{\begin{aligned}
        &I\big(t-\tau(z,y),z_0,y'(z,y)\big) e^{(a_3-a_1)\tau(z,y)}, && \text{if } t-\tau(z,y) >0
        \\
        &I_0(\overline{z},\overline{y})e^{(a_3-a_1)\tau(z,y)}, && \text{if } t-\tau(z,y), \le 0,
    \end{aligned}\right.
\end{equation*}
see Section \ref{sec:FullR0} below for more details.
Using the boundary condition \eqref{eqns:BCI}, becomes
\begin{equation}\label{DefSolI}
    I(t,z,y) = \left\{\begin{aligned}
        &\frac{E(t-\tau) g(y')}{\tau_h (a_1 z_0 - a_2y^\star)} e^{(a_3-a_1)\tau}, && \text{if } t-\tau >0
        \\
        &I_0(\overline{z},\overline{y})e^{(a_3-a_1)\tau}, && \text{if } t-\tau \le 0.
    \end{aligned}\right.
\end{equation}

 % $t-\tau$ to $t$, where $\tau >0$ is such that $(t-\tau,z(t-\tau),y(t-\tau))$ is at the border of the 3-dimensional domain of definition of $I$; that is, either $t-\tau=0$, or $z(t-\tau) = z_0$, or $y(t-\tau) = 0.$ Since we are interested only in the domain of $(z,y)$ bounded by the characteristic curve originating from $(z_0,0)$ and the line $\{z=z_0\}$, we can exclude the case $y(t-\tau) = 0$ (see Fig.~\ref{fig:Icharacterists}). 
%
%
%

\subsection{Mass conservation}
\label{sec:mass}

The system \eqref{eqns:FullModel}-\eqref{eqns:ini} consists of a 2-dimensional transport equation (for $I$) coupled to a 1-dimensional transport equation (for $R$), whose source term is the trace of $I$ on its outflow boundary. The trace of $R$ at $y=y_0$ is then input in the ODE for $S$, which is coupled to $E$, $E_v$, and $I_v$. In turn, $E_v$ depends on an integral operator of $I$, and the value of $E(t)$ contributes to the boundary value of $I$ on its inflow boundary.
From this description, we see that well-posedness for the problem may pose some challenge. Our first result asserts the mass conservation of the host population. The proof is postponed to Section~\ref{sec:proofs}.
\begin{proposition}\label{prop:cons}
Let $S,E,I,R, E_v,I_v$ solve the system \eqref{eqns:FullModel} in a classical sense.
  Then, the total population of the host is conserved, that is,
  \begin{equation*}
    \frac{d}{dt} \Big(  S(t) + E(t) + \int_0^{\infty}\!\!\int_{z_0}^{\infty} {I(t,z,y)}  \,dz dy + \int_{y_0}^{+\infty} R(t,y) \,dy  \Big) = 0.
  \end{equation*}
\end{proposition}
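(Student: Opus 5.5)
We differentiate each of the four host quantities in time, substitute the equations of \eqref{eqns:FullModel}, and check that all boundary fluxes cancel in pairs. Adding $\dot S$ and $\dot E$, the infection terms $\pm b\beta_{vh} m I_v S$ cancel, leaving
\[
\dot S + \dot E = a_5 y_0 R(t,y_0) - \tfrac{1}{\tau_h}E(t).
\]
The rest of the argument shows that $\frac{d}{dt}\int R$ produces $-a_5y_0R(t,y_0)$ plus the recovery inflow, and that $\frac{d}{dt}\iint I$ produces $+\frac1{\tau_h}E$ minus that same inflow.

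\textbf{Recovered compartment.} We integrate the $R$ equation over $y\in[y_0,\infty)$. Since $R(t,y)\to0$ as $y\to\infty$ (and we assume $yR\to 0$ as well, which is implicit in the classical setting), the divergence term gives
\[
\int_{y_0}^\infty \partial_y(a_5 yR)\,dy = -a_5y_0R(t,y_0).
\]
Hence
\[
\frac{d}{dt}\int_{y_0}^\infty R\,dy = -a_5y_0R(t,y_0) - \int_{y_0}^\infty I(t,z_0,y)(a_1z_0-a_2y)\,dy .
\]

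\textbf{Infected compartment.} We integrate the $I$ equation over the quarter-plane $[z_0,\infty)\times[0,\infty)$ and apply the divergence theorem. The decay of $I$ at infinity kills the far-field terms, so only two edges contribute.
\begin{itemize}
\item On the edge $y=0$, $z>z_0$, the boundary condition gives $I=0$, so this edge contributes nothing. Equivalently, $I$ is supported in $A$ by the characteristic construction \eqref{DefSolI}.
\item On the edge $z=z_0$, the contribution is $+\int_0^\infty (a_1z_0-a_2y)I(t,z_0,y)\,dy$.
\end{itemize}
We split the $z=z_0$ edge at $y_0$. On $[y_0,\infty)$ the term equals the outflow into $R$ and cancels exactly with the source term found above; on $[y_0,\infty)$ it is the trace from the interior, on the outflow boundary $Z_{\mathrm{out}}$. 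On $[0,y_0)$ we insert the boundary condition \eqref{eqns:BCI}:
\[
\int_0^{y_0}(a_1z_0-a_2y)\frac{E(t)g(y)}{(a_1z_0-a_2y^\star)\tau_h}\,dy=\frac{E(t)}{\tau_h}\cdot\frac{a_1z_0-a_2y^\star}{a_1z_0-a_2y^\star}=\frac{E(t)}{\tau_h}.
\]
This uses $\int_0^{y_0} g=1$ and $\int_0^{y_0} yg=y^\star$ from \eqref{eqns:y_star}. This is exactly why the constant $C$ in \eqref{eq:9} was chosen, and it cancels the $-E/\tau_h$ in $\dot E$.

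Summing the three computations, every term cancels and the total derivative is zero.

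\textbf{Main obstacle.} The delicate step is justifying the boundary manipulations on the unbounded domain: the far-field fluxes vanish only if $(a_1z-a_2y)I$ and $(a_4z-a_3y)I$ decay fast enough. We will handle this by noting that $I(t,\cdot)$ is supported in the bounded set $A$ of \eqref{def:A}. For $t$ large this follows from the characteristic representation \eqref{DefSolI}, and in general it holds provided $I_0$ is supported in $A$, which we adopt as the natural interpretation of the domain of $I$. Under that support assumption the integral over the quarter-plane reduces to one over $A$, and the only boundary of $A$ carrying flux is the segment $\{z_0\}\times[0,y^+(0)]$. Elsewhere on $\partial A$ the boundary is a characteristic, so the flux through it is zero.
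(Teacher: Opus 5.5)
Your proof is correct and is essentially the paper's argument: sum the equations, turn the divergence integral of $I$ into the flux through $\{z=z_0\}$ (the edge $y=0$ contributes nothing by \eqref{eqns:BCI}), cancel the part on $[y_0,\infty)$ against the source term of the $R$ equation, and use $\int_0^{y_0} g=1$ and $\int_0^{y_0} yg=y^\star$ to reduce the inflow part to $E(t)/\tau_h$. Your extra care about the far-field fluxes, namely requiring $yR\to 0$ and confining $I(t,\cdot)$ to the bounded set $A$, tightens what the paper leaves implicit in its decay assumptions rather than changing the route.
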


With Proposition \ref{prop:cons} in hand, we can use the conservation of the host mass to eliminate one of the variables in the system \eqref{eqns:FullModel}. For instance, if we write the susceptible compartment $S(t)$ as

\begin{equation}
    \begin{aligned}
        S(t) = 1 - E(t) - \int_{0}^{\infty}\!\!\int_{z_0}^{\infty}I(t,z,y) \,dzdy - \int_{y_0}^{\infty} R(t,y) \,dy, \qquad t\ge 0,
    \end{aligned}
\end{equation}
we obtain the system
\begin{equation}\label{eqns:FullModel2}
  \left\{
    \begin{aligned} 
      & \frac{\partial I}{\partial t} (t,z,y) + \frac{\partial }{\partial z} \big(( a_1 z - a_2 y) I(t,z,y)  \big)
    \\
    & \qquad\qquad\qquad {}+ \frac{\partial }{\partial y} \big( (-a_3 y  + a_4 z ) I(t,z,y)  \big) = 0, \qquad z \ge z_0, y\ge 0,
      \\
      & \frac{\partial R}{\partial t} (t,y) - \frac{\partial }{\partial y} \left( a_5 y R(t,y) \right)  = - I(t,z_0, y) \bigl( a_1 z_0 - a_2 y \bigr), \qquad y \ge y_0,
      \\
      & \dot{E}(t) = b \beta_{vh} m I_v(t) \Big( 1 - E(t) - \int_{0}^{\infty}\!\!\int_{z_0}^{\infty}I (t,z,y) \,dzdy - \int_{y_0}^{\infty} R(t,y) \,dy \Big) - \frac{1}{\tau_h}E(t),
      \\
      & \dot{E}_v(t) =  b \int_0^{\infty}\!\!\int_{z_0}^{\infty} {I(t,z,y)} \beta_{hv} (z) \,dz dy   \,\big( 1 - E_v(t) - I_v(t) \big)
       - \Big(\frac{1}{\tau_v} + \mu_v\Big) E_v(t), 
      \\
      & \dot{I}_v(t) = \frac{1}{\tau_v} E_v(t)- \mu_v I_v(t),
    \end{aligned}\right.
\end{equation}
along with the boundary conditions \eqref{eqns:BCI}
and the nonnegative initial conditions
\begin{equation}
  \label{eqns:ini2}
  \left\{
    \begin{aligned}
      &I(0,z,y) = I_0(z,y), \, z\ge z_0, y\ge 0, \quad R(0,y) = R_0(y),\, y\ge y_0,
      \\
      & E(0)=E_0, \quad E_v(0) = E_{v0}, \quad I_v(0) = I_{v0},
      \\
      &E_0 + \int_{z_0}^{\infty}\!\!\int_{0}^{\infty} I_0(z,y) \,dydz + \int_{y_0}^{\infty} R_0(y) \,dy \le 1,
    \\
    &I_{v0} + E_{v0} \le 1.
    \end{aligned}\right.
\end{equation}

The system \eqref{eqns:FullModel2},\eqref{eqns:ini2} has one less equation than the original system \eqref{eqns:FullModel}, which is an advantage in the numerical simulations below, where we use \eqref{eqns:FullModel2},\eqref{eqns:ini2}. However, there is no advantage of the simpler version in what concerns the proof of the well-posedness results, and so we keep the original version \eqref{eqns:FullModel} which is slightly clearer.

\subsection{Well-posedness result}
\label{sec:well-posedn-result}

Next, we state a well-posedness result for the system \eqref{eqns:FullModel},\eqref{eqns:ini}. First, let us make precise the notion of solution we are considering. The transport equation for $I$ has a time-dependent boundary condition on its inflow boundary. We will need to consider cases where this boundary condition is merely continuous in time. The propagation along the characteristics \eqref{eqns:characteristic} does not smoothen the solution, therefore strong solutions of the $I$ equation are not guaranteed. Moreover, it is natural to consider, for instance, situations where the initial infected population is zero ($I_0(z,y)\equiv 0$), but $E_0>0.$ This instantly produces a discontinuity in $I$ for $t>0$, due to the boundary condition \eqref{eqns:BCI}. So, unless there is an artificial compatibility condition at $t=0$ between $I_0$ and the boundary data, non-smooth solutions are expected. For this reason we rely on the results of \cite{bardos1979first,colombo2015rigorous}, which assert well-posedness of distributional solutions to the two transport equations present in the system \eqref{eqns:FullModel}. 
Is is then standard to prove that this notion of solution is equivalent to the one stated in \eqref{DefSolI}.

Furthermore, the results in \cite{bardos1979first,colombo2015rigorous} give that the solutions have bounded variation and bounded traces along the boundaries and at $t=0$, attained in an $L^1$ sense. The consequence for our purposes is that the resulting notion of solution allows for manipulations in the integral formulation which produce the same results as taking the strong formulation, integrating, and doing the same manipulations. This will be true as long as we don't need to consider pointwise values of $I$ in our arguments, which is indeed the case in the technical proofs.

For $R$, the situation is simpler, even though the strong form also cannot be used  (since the source term is not necessarily continuous). Its characteristics are solutions to $\dot{y} = -a_5 y$, and so the solution can be represented either as an integration along the characteristics, or as the integral (distributional) form (see \cite{perthame}). Both these formulations admit bounded, but possibly discontinuous, source terms, and it is easy to see that the resulting solution $R$ is still continuous.

For this reason, in the proof of our well-posedness result, we slightly abuse the notation and write the transport equations in strong form to keep the presentation cleaner and more understandable. 

\begin{theorem}
  \label{thm:well-posedn-result}
  Under the assumption \eqref{eqns:29}, the system \eqref{eqns:FullModel}-\eqref{eqns:ini} has a unique solution, where the transport equations for $I$ and $R$ are understood in the distribution sense, or in the sense of \eqref{DefSolI}.
\end{theorem}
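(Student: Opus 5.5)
We plan a Banach fixed-point argument on a short time interval $[0,T]$, followed by an a priori bound that lets us extend the solution to all $t\ge 0$. The fixed point is taken in the variables that couple the two transport equations to the ODEs. Concretely, given a continuous function $E$ on $[0,T]$, the $I$ equation is a linear transport equation with the explicit representation \eqref{DefSolI}. Here the inflow datum $E(t-\tau)g(y')/(\tau_h(a_1z_0-a_2y^\star))$ is given on $Z_{\mathrm{in}}$, and the flow of characteristics is a bijection between $A$ and $\{(\tau,y'):\tau\in(0,\tau_1(y'))\}$ by Prop.~\ref{prop:y}. By \cite{bardos1979first,colombo2015rigorous}, this gives the unique distributional solution $I=\mathcal I[E]$, which depends linearly on $E$. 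From $I$ we obtain the outflow trace $-I(t,z_0,y)(a_1z_0-a_2y)$ on $Z_{\mathrm{out}}$, and then $R=\mathcal R[I]$ by integrating along $\dot y=-a_5y$; this again depends linearly on $E$. The quantities fed back into the ODEs are $R(t,y_0)$ and $J(t):=\int\!\!\int I\beta_{hv}\,dz\,dy$. We then solve the ODE system for $(S,E,E_v,I_v)$ with these inputs, which yields a new $\tilde E$. Our aim is to show that $\Phi:E\mapsto\tilde E$ is a contraction on $C([0,T])$ for $T$ small.

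The key estimates are the following. For $J$, we change variables $(z,y)\mapsto(\tau,y')$, whose Jacobian is $e^{(a_1-a_3)\tau}$ times the inflow flux $(a_1z_0-a_2y')$; this cancels the factor $e^{(a_3-a_1)\tau}$. We therefore get $|J[E_1](t)-J[E_2](t)|\le C\int_0^t|E_1-E_2|\,ds$, using $\beta_{hv}\le1$. The boundary trace $R(t,y_0)$ is more delicate: it equals the value transported from the inflow of $R$ along characteristics, namely the flux that entered at $y^+$ at time $t-s$ with $y_0e^{a_5s}=y^+$, plus the initial datum. Using the monotone relation between $y'$ and $y^+$ and the same Jacobian computation, we expect to write $a_5y_0R(t,y_0)$ as the initial contribution plus $\int_0^t K(t-s)E(s)\,ds$ with a bounded kernel $K$. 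This gives again $|\Delta R(t,y_0)|\le C\int_0^t|\Delta E|$. The ODE right-hand sides are globally Lipschitz on the bounded set where all variables lie in $[0,1]$, so Gronwall yields $\|\tilde E_1-\tilde E_2\|_\infty\le CT\|E_1-E_2\|_\infty$, which is a contraction for small $T$.

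For global existence we need the constant $C$ to be uniform, so we prove invariance of the state set. Positivity of $S,E,E_v,I_v$ follows from the quasi-positive structure of the ODEs, with $I,R\ge0$ coming from the representation formulas; moreover $E_v+I_v\le1$ because the derivative of $E_v+I_v$ is nonpositive when the sum equals $1$. Prop.~\ref{prop:cons}, applied in integral form (valid for distributional solutions, as discussed above), keeps the total host mass equal to $1$, so all quantities stay in $[0,1]$. The Lipschitz constants therefore do not depend on the current time, and we can iterate the local result on intervals of fixed length $T$. Uniqueness follows from the contraction together with the uniqueness of distributional solutions to each linear transport problem.

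The main obstacle is the estimate for the trace $R(t,y_0)$ in terms of the history of $E$. This step requires controlling the map $y'\mapsto(\tau_1(y'),y^+(y'))$ from Prop.~\ref{prop:y}; we need it to be continuous, monotone, and to have a bounded Jacobian of the change of variables. We would handle this through the implicit function theorem applied to \eqref{eqns:tau1Characteristics}, noting that $\dot z<0$ at the exit point, which ensures transversality.
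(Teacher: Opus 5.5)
Your proposal is correct and follows essentially the same route as the paper, which proves Theorem~\ref{thm:well-posedn-result} by a Banach fixed-point argument (details deferred to its supplementary material); like you, it closes the coupling through the inflow datum $E$ and the feedback quantities $R(t,y_0)$ and $\iint I\beta_{hv}$, and extends globally using invariance of the state set and mass conservation. The step you flag as the main obstacle is easier than you expect: by the same flux identity used in the proof of Prop.~\ref{prop:R0Full}, $a_5y_0\,\Delta R(t,y_0)$ is an average of $\Delta E$ at the delayed times $t-\tau_1(y')-\frac{1}{a_5}\ln(y^+(y')/y_0)$, with weight $g(y')(a_1z_0-a_2y')/(\tau_h(a_1z_0-a_2y^\star))$ of total mass $1/\tau_h$, so $|\Delta R(t,y_0)|\le (a_5y_0\tau_h)^{-1}\sup_{[0,t]}|\Delta E|$, which already gives the contraction after the Gronwall step and needs neither a bounded convolution kernel (that would require strict monotonicity of this total delay in $y'$, which the implicit function theorem does not provide) nor any bound on $g$ beyond its being a probability density.
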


The proof of Theorem \ref{thm:well-posedn-result}, using the Banach fixed-point theorem, is conceptually standard but technically burdensome and is relegated to the supplementary materials.

\subsection{The reproduction number $\mathcal{R}_0$ and endemic equilibrium}
\label{sec:FullR0}

Here we investigate the existence of a nontrivial stationary solution to the system \eqref{eqns:FullModel2}, and relate it to a threshold parameter $\mathcal{R}_0$. This endemic equilibrium
\begin{equation}\label{endeq}
    \big(E^*,E_v^*,I_v^*,I^*(z,y),R^*(y)\big),
\end{equation}
with $E^*,E_v^*,I_v^*>0$
should satisfy the following stationary system:
\begin{equation}\label{eqns:EquiFull}
  \left\{
    \begin{aligned} 
      &\frac{\partial }{\partial z} \big(( a_1 z - a_2 y) I^*(z,y)  \big)
       + \frac{\partial }{\partial y} \big( (-a_3 y  + a_4 z ) I^*(z,y)  \big) = 0, \qquad z \ge z_0, y\ge 0,
      \\
      & - \frac{\partial }{\partial y} \left( a_5 y R^*(y) \right)  = - I^*(z_0, y) \bigl( a_1 z_0 - a_2 y \bigr), \qquad y \ge y_0,
      \\
      & 0 = b \beta_{vh} m I_v^* \Big( 1 - E^* - \int_{0}^{\infty}\!\!\int_{z_0}^{\infty}I^*(z,y) \,dzdy - \int_{y_0}^{\infty} R^*(y) \,dy \Big) - \frac{1}{\tau_h}E^*,
      \\
      & 0 =  b \int_0^{\infty}\!\!\int_{z_0}^{\infty} {I^*(z,y)} \beta_{hv} (z) \,dz dy   \,\big( 1 - E_v^* - I_v^* \big)
       - \Big(\frac{1}{\tau_v} + \mu_v\Big) E_v^*, 
      \\
      &0= \frac{1}{\tau_v} E_v^*- \mu_v I_v^*,
    \end{aligned}\right.
\end{equation}
with the inflow boundary condition for $I^*$,
\begin{equation}
  \label{eqns:BCEqui}
  \begin{aligned}
    &I^*(z=z_0, y \in [0,y_0] ) = \frac{ E^* g(y) }{\tau_h(a_1 z_0 - a_2 y^\star)} ,
  \end{aligned}
\end{equation}
where $y^\star$, the center of mass of $g$, is given by \eqref{eqns:y_star}.

The first remark is that, given any positive value $E^*$, a solution $I^*(z,y)$ of the first equation in \eqref{eqns:EquiFull} exists. Indeed, for any $(z,y)$ in the domain $A$ (cf.~\eqref{def:A}), we consider the flow of the characteristics, $\Phi(\tau,y') = (z(\tau,y'),y(\tau,y'))$, which pairs each $y'\in [0,y_0)$ and $\tau>0$ to the point $(z,y)\in A$ such that $\tau \mapsto (z(\tau,y'),y(\tau,y'))$ solves the system \eqref{eqns:ODE} with initial conditions $(z_0,y')$\footnote{Or, equivalently, $(z,y)$ are explicitly given by \eqref{eqns:characteristic}.}. Then,
\begin{equation}\label{Istar}
    I^*(z(\tau,y'),y(\tau,y')) = I(z_0,y') e^{(a_3-a_1)\tau} = \frac{E^* g(y')}{\tau_h(a_1z_0-a_2y^\star)}e^{(a_3-a_1)\tau}.
\end{equation}
The set $A$ is the set of points $(z,y)$ that can be reached by the characteristics. In particular, when $\tau = \tau_1(y')$, then according to Prop.~\ref{prop:y} the point $(z(\tau_1,y'),y(\tau_1,y'))$ lies on the outflow boundary and defines the source term for the $R^*$ equation in \eqref{eqns:EquiFull}.

Similarly, once the right-hand side of the equation for $R^*$ is well defined, then $R^*(y)$ exists under the condition $R^*(+\infty)=0$ and is given by 
\begin{equation}
    \label{Rstar}
    R^*(y) = -\frac{1}{a_5y} \int_y^{\infty}I^*(z_0, r) \bigl( a_1 z_0 - a_2 r \bigr) \,dr, \qquad y \ge y_0.
\end{equation}

With these remarks in mind, we can state our result which defines a suitable reproduction number $\mathcal{R}_0$, and relates it to the existence of an endemic equilibrium. 

\begin{proposition}\label{prop:R0Full}
    Let $\tau_1(y)$ and $y^+(y)$ be defined in Prop.~\ref{prop:y} for $y\in [0,y_0)$, and set
    \begin{equation}\label{defs1}
        \begin{aligned}     
        &\Tcal_1 = \frac{\int_{0}^{y_0}g(y)(a_1z_0 -a_2y) \tau_1(y) \,dy}{\int_{0}^{y_0}g(y)(a_1z_0 -a_2y) \,dy},
        \\
        & \Tcal_2 = \frac{\int_{0}^{y_0}g(y)(a_1z_0 -a_2y) \ln(y^+/y_0) \,dy}{a_5\int_{0}^{y_0}g(y)(a_1z_0 -a_2y) \,dy},
        \\
        &\Tcal_1[\beta_{hv}] = \frac{\int_{0}^{y_0}g(y)(a_1z_0 -a_2y) \Big[\int_{0}^{\tau_1(y)} \beta_{hv}(z(\tau,y)) d\tau\Big] \,dy}{\int_{0}^{y_0}g(y)(a_1z_0 -a_2y) \,dy}.
        \end{aligned}
    \end{equation}
    We call $\Tcal_1$ the \emph{mean infective permanence time} and $\Tcal_2$ the \emph{mean recovery time}.
    Let
    \begin{equation}\label{R_0Full}
        \mathcal{R}_0 = \frac{b^2\beta_{vh}m \,\Tcal_1[\beta_{hv}]}{\mu_v(1+\tau_v\mu_v)}.
    \end{equation}
    Then, an endemic equilibrium \eqref{endeq} of the system \eqref{eqns:FullModel2} exists if, and only if, $\mathcal{R}_0>1.$ In that case,
    \begin{equation}\label{equi-full}
      \begin{aligned}
          &E^* = \frac{\mu_v\tau_h (\mathcal{R}_0 -1)}{\mu_v \mathcal{R}_0(\Tcal_1 + \Tcal_2 + \tau_h)+ b\beta_{vh}\Tcal_1[\beta_{hv}]}, 
          \\
          &E_v^* = \frac{\mu_v\tau_v(\mathcal{R}_0-1)}{\mathcal{R}_0(1+\mu_v\tau_v) +  b \beta_{vh}m(\Tcal_1 + \Tcal_2 + \tau_h)},
          \\
          &I_v^* = \frac{(\mathcal{R}_0-1)}{\mathcal{R}_0(1+\mu_v\tau_v) +  b \beta_{vh}m (\Tcal_1 + \Tcal_2 + \tau_h)},
      \end{aligned}
  \end{equation}
  and $I^*,R^*$ are defined in the remarks before the statement of the proposition (see \eqref{Istar}).
\end{proposition}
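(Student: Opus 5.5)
The plan is to reduce the stationary system \eqref{eqns:EquiFull} to scalar relations in $E^*$ alone. By \eqref{Istar} and \eqref{Rstar}, $I^*$ and $R^*$ are linear in $E^*$, so the three integral functionals appearing in \eqref{eqns:EquiFull} are multiples of $E^*$:
\[
\int\!\!\int I^*\,dz\,dy = c_1E^*,\qquad \int_{y_0}^\infty R^*\,dy = c_2E^*,\qquad \int\!\!\int I^*\beta_{hv}\,dz\,dy = c_3E^*.
\]
The goal is to show $c_1=\Tcal_1/\tau_h$, $c_2=\Tcal_2/\tau_h$ and $c_3=\Tcal_1[\beta_{hv}]/\tau_h$.

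For the $I^*$ integrals I change variables $(z,y)=\Phi(\tau,y')$ on $A$, using the bijection from Prop.~\ref{prop:y}. By Liouville's formula the Jacobian of the linear flow satisfies $\det D\Phi(\tau,y') = e^{(a_1-a_3)\tau}\det D\Phi(0,y')$. At $\tau=0$ we have $\partial_\tau\Phi=(a_1z_0-a_2y',\,\cdot)$ and $\partial_{y'}\Phi=(0,1)$, so the determinant has modulus $a_1z_0-a_2y'>0$. The factor $e^{(a_1-a_3)\tau}$ cancels against $e^{(a_3-a_1)\tau}$ in \eqref{Istar}, which gives
\[
\int_A I^*h(z)\,dz\,dy=\frac{E^*}{\tau_h(a_1z_0-a_2y^\star)}\int_0^{y_0}g(y')(a_1z_0-a_2y')\int_0^{\tau_1(y')}h(z(\tau,y'))\,d\tau\,dy'.
\]
Since $\int g=1$, the normalizing constant equals $\int g(y)(a_1z_0-a_2y)\,dy$, and this yields $c_1$ and $c_3$ with $h\equiv1$ and $h=\beta_{hv}$.

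For $R^*$, I integrate \eqref{Rstar} and swap the order of integration:
\[
\int_{y_0}^\infty R^* = \frac{1}{a_5}\int_{y_0}^\infty I^*(z_0,r)(a_2r-a_1z_0)\ln(r/y_0)\,dr.
\]
I then parametrize the outflow boundary by $r=y^+(y')$. The outflow flux density $I^*(z_0,r)(a_2r-a_1z_0)\,dr$ equals the inflow flux $I^*(z_0,y')(a_1z_0-a_2y')\,dy'$. This follows from flux conservation along characteristics: apply the divergence theorem to the region swept between two nearby characteristics, or use the same Jacobian computation at $\tau=\tau_1$. It gives $c_2=\Tcal_2/\tau_h$. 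I expect this step to be the main obstacle, because it needs care about the monotonicity and injectivity of $y'\mapsto y^+(y')$ and about the Jacobian along the exit curve. A cleaner alternative is to note that the total outflow equals the inflow $E^*/\tau_h$, which follows from the mass-balance computation used in Prop.~\ref{prop:cons}.

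Finally, the ODE equations are algebraic. The last equation gives $E_v^*=\mu_v\tau_vI_v^*$. The vector equation then reads
\[
b\,c_3E^*(1-E_v^*-I_v^*)=(1+\mu_v\tau_v)\mu_vI_v^*/\tau_v\cdot\tau_v,
\]
that is, $b\,c_3E^*\bigl(1-(1+\mu_v\tau_v)I_v^*\bigr)=\mu_v(1+\mu_v\tau_v)I_v^*$. The host equation reads $b\beta_{vh}mI_v^*\bigl(1-(c_1+c_2+1)E^*\bigr)=E^*/\tau_h$.

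I solve the vector equation for $I_v^*$ as an increasing Möbius function of $E^*$ and substitute it into the host equation. Dividing by $E^*>0$ leaves a linear equation in $E^*$. Its solution is positive, with $I_v^*>0$ and $S^*>0$, exactly when $b^2\beta_{vh}m\,c_3\tau_h>\mu_v(1+\mu_v\tau_v)$, that is, when $\mathcal{R}_0>1$. Conversely, if $\mathcal{R}_0\le1$ the only solution has $E^*\le 0$. Solving explicitly should then reproduce \eqref{equi-full}, which I will check by direct substitution.
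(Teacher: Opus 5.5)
Your proposal is correct and follows essentially the paper's argument: the change of variables along the characteristic flow with Jacobian $e^{(a_1-a_3)\tau}(a_1z_0-a_2y')$, Fubini on \eqref{Rstar} to produce the weight $\ln(r/y_0)$, and transfer of the weighted outflow flux back to the inflow boundary. The paper does this last step with the divergence theorem applied to $I^*a$, with $a$ constant along characteristics, which is your flux-conservation step. You then do the algebra more explicitly than the paper, so the ``only if'' direction is actually covered.

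Two caveats. First, your ``cleaner alternative'' would not suffice: $\Tcal_2$ needs the outflow flux weighted by $\ln(y^+/y_0)$, not just its total. Second, your linear equation yields $E^*=\mu_v\tau_h(\mathcal{R}_0-1)/\bigl(\mu_v\mathcal{R}_0(\Tcal_1+\Tcal_2+\tau_h)+b\,\Tcal_1[\beta_{hv}]\bigr)$, which is consistent with \eqref{equi-delay}. So the extra factor $\beta_{vh}$ in the printed \eqref{equi-full}, and the missing $b\,\Tcal_1[\beta_{hv}]$ term in the paper's \eqref{Estar1}, are typos rather than a flaw in your derivation.
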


Note that the mean infective permanence time $\Tcal_1$ is a weighted average of the infective permanence times $\tau_1(y)$ for $y\in (0,y_0)$. Also, The term $\Tcal_1[\beta_{hv}]$ reduces to $\Tcal_1 \beta_{hv}$ in the case where $\beta_{hv}(z)$ is a constant.

The proof of Prop.~\ref{prop:R0Full} is postponed until Section~\ref{sec:proofs}.

% \subsection{The reproduction number $R_0$ and the stability of the disease-free equilibrium}

% In this section, we show how the reproduction number \eqref{R_0Full} is related to linear stability properties of the disease-free equilibrium (DFE),
% \begin{equation}
%     (E,E_v,I_v,I,R) = (0,0,0,0,0).
% \end{equation}
% Although we could not achieve a fully rigorous result, we can still see that under some simplifications, $R_0 <1$ is indeed closely related to the linear stability of the DFE. 

% Our reasoning is as follows: consider the linearization of the system \eqref{eqns:FullModel2} around the DFE. The equations for $I$ and $R$ do not change, as they are linear already. The equations for $E,E_v,I_v$ are
% \begin{equation}\left\{
%     \begin{aligned}
%         &\dot E(t) = b\beta_{vh}m I_v - \frac{1}{\tau_h}E,
%         \\
%         & \dot E_v(t) = b \int_0^{\infty}\!\!\int_{z_0}^{\infty} {I(t,z,y)} \beta_{hv} (z) \,dz dy - \Big(\frac{1}{\tau_v}+ \mu_v\Big) E_v,
%         \\
%         & \dot I_v(t) = \frac{1}{\tau_v} E_v - \mu_v I_v.
%     \end{aligned}\right. 
% \end{equation}

%
%
%
%%%%%%%%%%%%%%%%%%%%%%%%%%%%%%%%%%%%%%%%%%%%%%%%%%%%%%%%%%%%%%%%%%
\section{A model with uniform host response}
\label{subsec:DelayModel}

The full model \eqref{eqns:FullModel} poses numerical challenges due to the two-dimensional nature of the transport equation. In this section, we deduce from \eqref{eqns:FullModel} a particular case where we assume that the probability distribution $g(y)$ in \eqref{eqns:BCI}, which governs the antibody distribution of the population as they enter the infected compartment from the exposed compartment, is a Dirac delta concentrated on some $y^\star \in (0,y_0)$. This means that there is no uncertainty in the antibody level of individuals as they enter the infected compartment, which implies that the within-host dynamics are the same for all the host population. We call this the \emph{uniform host response} model, or UHR model for short. A uniform response might be a good approximation when the host population is small, or when the virus-antibody dynamics does not have large variations in the population. 
Thus, we assume that for some $y^\star\in (0,y_0)$, $g$ is the distribution on $(0,y_0)$
\begin{equation}
  \label{eqns:g}
  g(y) = \delta(y-y^\star).
\end{equation}

Let us now see what consequences this choice has on the structure of the system \eqref{eqns:FullModel}.
At least formally, the choice \eqref{eqns:g} implies that the infected host population is concentrated on the single characteristic issuing from $(z_0,y^\star)$, namely \eqref{eqns:characteristic}. That is, the microdynamics of the infected host is described by a single characteristic curve, starting from $(z_0,y^\star)$. 
(Actually, this can be made precise, as in \cite[Prop.6.4]{perthame}, and we can deduce that $I(t,z,y) = \delta\big((z,y) - (z(t),y(t))\big)$ is a solution in the distributional sense to the transport equation for $I$; but this falls outside the scope of this work).

Under the present assumptions, the total outflow from the infected compartment $I$ at time $t+\tau_1$ (where $\tau_1 = \tau_1(y^\star)$ is given by Prop.~\ref{prop:y}) on the border $Z_{\mathrm{out}}$ is the same as the inflow at time $t$ at the inflow border $Z_{\mathrm{in}}$, which is controlled by the boundary condition \eqref{eqns:BCI}. At the same time, at the outflow border, $I(t,z_0,y)$ should be a multiple of the Dirac delta concentrated at $y^+=y^+(y^\star)$ (see Prop.~\ref{prop:y}). These facts imply that 
\begin{equation}
   (a_1 z_0- a_2 y) I(t+\tau_1,z_0, y ) = -\frac{E(t)}{\tau_h} \delta(y-y^+)
  \label{eq:I_tau1}
\end{equation}
on the border $Z_{\mathrm{out}}$, which is the domain of definition of $R(t,y)$.
Therefore, we (formally) have for $R(t,y)$
\begin{equation}
  \label{eqns:30}
  \begin{aligned}
    \frac{\partial R}{\partial t}(t,y) - \frac{\partial }{\partial y}(a_5 y R(t,y)) =-\frac{E(t)}{\tau_h} \delta(y-y^+), \qquad t>\tau_1,\, y\in [y_0,+\infty).
  \end{aligned}
\end{equation}
Now, it is a simple exercise in distribution theory to show that, in the present context, the equation \eqref{eqns:30} with a Dirac delta on the right-hand side, concentrated on a (fixed) $y^+ > y_0$, is equivalent to solving the boundary value problem
\begin{equation}
  \label{eqns:31}
  \left\{
    \begin{aligned}
      & \frac{\partial R}{\partial t}(t,y) - \frac{\partial }{\partial y}(a_5 y R(t,y)) = 0, \qquad t>\tau_1, \quad y\in [y_0,y^+],
      \\
      & R(t,y^+) = \frac{1}{a_5 y^+ \tau_h}E(t-\tau_1), \qquad t>\tau_1.
    \end{aligned}\right.
\end{equation}
From now on, we suppose $R$ is only defined for $y\in [y_0,y^+]$ to be consistent with the remaining exposition.
The equation \eqref{eqns:31} makes it easy to determine the value of $R(t,y_0)$ as a function of the inflowing value $R(t,y^+)$ at an earlier time. Indeed, an easy calculation using the characteristics of the equation \eqref{eqns:31},
\begin{equation*}
  y(t) = y(0)e^{-a_5 t}, \qquad y(0) \in [y_0,y^+],
\end{equation*}
shows that for $y\in[y_0,y^+]$ it holds
\begin{equation*}
  R(t,y(t)) = R(t-s,y(t-s)) e^{a_5s},
\end{equation*}
for $s$ such that $y(t-s) \in [y_0,y^+]$. When $y(t-s) = y^+$, then $y(t) = y^+ e^{-a_5 s}$ and so for $y\in [y_0,y^+]$ we have
\begin{equation}
  \label{eqns:10}
  \begin{aligned}
    R(t,y) &= R\Big(t- \frac{1}{a_5}\ln(y^+/y),y^+\Big) \frac{y^+}{y}
    \\
           & = \frac{1}{a_5 y^+ \tau_h} E\Big(t- \tau_1 - \frac{1}{a_5}\ln(y^+/y)\Big) \frac{y^+}{y}.
  \end{aligned}
\end{equation}

From the formula \eqref{eqns:10} we get
\begin{equation*}
  \int_{y_0}^{y^+} R(t,y) \,dy = \int_{y_0}^{y^+} \frac{1}{a_5\tau_h} E\Big(t- \tau_1 - \frac{1}{a_5}\ln(y^+/y) \Big) \frac{1}{y} \,dy. 
\end{equation*}
by a change of variables, and defining the \emph{recovery period}
\begin{equation}
  \label{eqns:32}
  \tau_2 := \frac{1}{a_5}\ln \frac{y^+}{y_0},
\end{equation} 
we arrive at
\begin{equation}\label{R01}
  \int_{y_0}^{y^+} R(t,y) \,dy = \int_{0}^{\tau_2} \frac{1}{\tau_h} E\big(t- \tau_1 - \tau_2 + s \big) \,ds = \int_{t-\tau_1-\tau_2}^{t-\tau_1} \frac{1}{\tau_h} E(s) \,ds, 
\end{equation}
for the total population in recovery at time $t$.

Going further, we assume that the host-vector infectivity probability $\beta_{hv}$ is constant. 
The total infected population at time $t$ is given by the population that entered at a rate $E(t)/\tau_h$ through the inflow (concentrated at the point $(z_0,y^\star)$), according to the boundary condition \eqref{eqns:BCI}, from time $t-\tau_1$ to $t$. Therefore we can write
\begin{equation}\label{I01}
  \begin{aligned}
    \int_{z_0}^{\infty}\!\!\int_{0}^{\infty}I(t,z,y) \,dydz  =  \int_{t-\tau_1}^{t} \frac{1}{\tau_h} E(s) \,ds. 
    \end{aligned}
\end{equation}

Using \eqref{I01} and \eqref{R01}, we can eliminate the equation for $S$ by
\begin{equation*}
  \begin{aligned}
    S(t) &= 1-E(t)- \int_{z_0}^{\infty}\!\!\int_{0}^{\infty}I(t,z,y) \,dydz - \int_{y_0}^{y^+} R(t,y) \,dy
    \\
    & = 1 -E(t) - \int_{t-\tau_1-\tau_2}^{t} \frac{1}{\tau_h} E(s) \,ds. 
    \end{aligned}
\end{equation*}

All in all, the previous arguments allow us to reduce the system \eqref{eqns:FullModel} to the following integro-differential distributed delay-type system:
\begin{equation}
  \label{eqns:delay}
  \left\{
  \begin{aligned}
    &\dot E(t) = b\beta_{vh} m I_v(t) \Big( 1 -E(t) - \int_{t-\tau_1-\tau_2}^{t} \frac{1}{\tau_h} E(s) \,ds\Big) - \frac{1}{\tau_h}E(t)
    \\
    &\dot E_v(t) = b\beta_{hv}\int_{t-\tau_1}^{t} \frac{1}{\tau_h} E(s) \,ds \cdot \big(1 - E_v(t) - I_v(t)\big)
    - \Big(\frac{1}{\tau_v} + \mu_v \Big) E_v(t)
    \\
    &\dot I_v(t) = \frac{1}{\tau_v}E_v(t) - \mu_v I_v(t).
  \end{aligned}\right.
\end{equation}

Due to the distributed delay nature of the system, the initial data for $E$ should include information about the past behavior of $E$. Therefore,
the data for the system \eqref{eqns:delay} are
\begin{equation}
  \label{eqns:data}\left\{
  \begin{aligned}
    &E_v(0) = E_{v0},\quad I_v(0) = I_{v0},
    \\
    & E(t) = E_0(t), \quad t \in [-\tau_1 - \tau_2,0],
  \end{aligned}\right.
\end{equation}
subject to 
\begin{equation}
    E_{v0} + I_{v0} \le 1, \qquad \int_{-\tau_1-\tau_2}^{0} E_0(s) \,ds \le 1.
\end{equation}
Note that in this formulation, the information about the initial recovered population is contained in the values of $E_0(s)$ for $s\in [-\tau_1-\tau_2, -\tau_1],$ and the information about the initial infected host population is expressed in $E_0(s)$ for $s\in [-\tau_1,0].$ In particular, the integral of $E_0(s)$ over these two intervals gives, respectively, the total initial recovered host and the total initial infected host.

%%%%%%%%%%%%%%%%%%%%%%%%%%%%%%%%%%%%%%%%%%%%%%%%%%%%%%%%%%%%%%%%%%
\subsection{The reproduction number and endemic equilibrium for the uniform host response model}
\label{sec:Dynamics}
%%%%%%%%%%%%%%%%%%%%%%%%%%%%%%%%%%%%%%%%%%%%%%%%%%%%%%%%%%%%%%%%%%

The disease-free equilibrium (DFE) of the system \eqref{eqns:delay} is given by
\begin{equation}
  \mathrm{DFE} = (E, E_v, I_v) = \left(0, 0, 0 \right),
\end{equation}
where $E =0$ means that $E(t)=0$ for all $t\in[-\tau_1-\tau_2,0]$.

The endemic equilibrium $(E^*, E_v^*, I_v^*)$
of the system \eqref{eqns:delay} is described in the following proposition, where we also introduce the endemic threshold, or basic reproduction number, $\mathcal{R}_0$. The result is to be compared with the results in Prop.~\ref{prop:R0Full} for the full system.
\begin{proposition}\label{prop:R0}
Let
\begin{equation}\label{R0}
     \mathcal{R}_0 =  \frac{b^2 \beta_{vh} \beta_{hv} m \tau_1}{ \mu_v ( \mu_v \tau_v +1 ) }.
\end{equation}
  Then, there exists an endemic equilibrium $(E^*,E_v^*,I_v^*) \in (0,1)^3$ of the system \eqref{eqns:delay} if and only if $\mathcal{R}_0>1.$ In that case, we have
  \begin{equation}\label{equi-delay}
      \begin{aligned}
          &E^* \equiv \frac{\mu_v\tau_h (\mathcal{R}_0 -1)}{\mu_v \mathcal{R}_0(\tau_1+\tau_2+\tau_h)+ b\beta_{hv}\tau_1}, \qquad \text{on }  [-\tau_1-\tau_2,0],
          \\
          &E_v^* = \frac{\mu_v\tau_v(\mathcal{R}_0-1)}{\mathcal{R}_0(1+\mu_v\tau_v) +  b \beta_{vh}m(\tau_1+\tau_2+\tau_h)},
          \\
          &I_v^* = \frac{(\mathcal{R}_0-1)}{\mathcal{R}_0(1+\mu_v\tau_v) +  b \beta_{vh}m(\tau_1+\tau_2+\tau_h)}.
      \end{aligned}
  \end{equation}
\end{proposition}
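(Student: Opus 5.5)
The plan is to look for constant solutions of \eqref{eqns:delay} and reduce the problem to a single scalar equation. If $E(t)\equiv E^*$ is constant on $[-\tau_1-\tau_2,0]$ and for all later times, the distributed delays become explicit: $\int_{t-\tau_1}^t \frac{1}{\tau_h}E\,ds = \frac{\tau_1}{\tau_h}E^*$ and $\int_{t-\tau_1-\tau_2}^t \frac{1}{\tau_h}E\,ds=\frac{\tau_1+\tau_2}{\tau_h}E^*$. An equilibrium is then exactly a solution of an algebraic system. To lighten notation I will set $K=b\beta_{vh}m$ and $T=\tau_1+\tau_2+\tau_h$. Setting the right-hand sides of \eqref{eqns:delay} to zero gives three equations:
\begin{equation*}
K I_v^*\Big(1-\frac{T}{\tau_h}E^*\Big)=\frac{E^*}{\tau_h},\qquad
\frac{b\beta_{hv}\tau_1}{\tau_h}E^*\,(1-E_v^*-I_v^*)=\Big(\frac{1}{\tau_v}+\mu_v\Big)E_v^*,\qquad
E_v^*=\mu_v\tau_v I_v^*.
\end{equation*}

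Next I eliminate variables. The first equation is linear in $E^*$, so $E^*=K\tau_h I_v^*/(1+KTI_v^*)$. The third gives $E_v^*=\mu_v\tau_v I_v^*$, hence $1-E_v^*-I_v^*=1-(1+\mu_v\tau_v)I_v^*$. I substitute both into the second equation and divide by $I_v^*>0$; positivity is part of the definition of endemic equilibrium, and $I_v^*=0$ forces the DFE. This yields
\begin{equation*}
b^2\beta_{vh}\beta_{hv}m\tau_1\big(1-(1+\mu_v\tau_v)I_v^*\big)=\mu_v(1+\mu_v\tau_v)(1+KTI_v^*).
\end{equation*}
The key observation is that the constant on the left equals $\mathcal{R}_0\,\mu_v(1+\mu_v\tau_v)$, by \eqref{R0}. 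The equation therefore collapses to $\mathcal{R}_0(1-(1+\mu_v\tau_v)I_v^*)=1+KTI_v^*$, whose unique solution is
\begin{equation*}
I_v^*=\frac{\mathcal{R}_0-1}{\mathcal{R}_0(1+\mu_v\tau_v)+KT}.
\end{equation*}
This is positive if and only if $\mathcal{R}_0>1$, which gives both directions of the equivalence. The formula for $E_v^*$ follows from $E_v^*=\mu_v\tau_vI_v^*$.

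For $E^*$, I will compute $1+KTI_v^*=\mathcal{R}_0(1+\mu_v\tau_v+KT)/(\mathcal{R}_0(1+\mu_v\tau_v)+KT)$, which gives $E^*=K\tau_h(\mathcal{R}_0-1)/\big(\mathcal{R}_0(1+\mu_v\tau_v+KT)\big)$. I then multiply numerator and denominator by $\mu_v/K$ and use the identity $\mathcal{R}_0\mu_v(1+\mu_v\tau_v)/K=b\beta_{hv}\tau_1$; this recovers the expression in \eqref{equi-delay}. This bookkeeping step is where I expect the only real care to be needed, namely matching the paper's normalized form.

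Finally, I will check that the equilibrium lies in $(0,1)^3$ when $\mathcal{R}_0>1$. From the first equation, $1-TE^*/\tau_h=E^*/(K\tau_hI_v^*)>0$, so $E^*<\tau_h/T<1$. From the second equation, $1-E_v^*-I_v^*>0$, so $E_v^*,I_v^*<1$. All three components are positive since $\mathcal{R}_0>1$.
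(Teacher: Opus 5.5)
Your proposal is correct and follows essentially the paper's route. Like the paper, you restrict to constant states so that the distributed delays become $\frac{\tau_1}{\tau_h}E^*$ and $\frac{\tau_1+\tau_2}{\tau_h}E^*$, and your formulas for $I_v^*$, $E_v^*$, $E^*$ and the $(0,1)^3$ bounds all check out. The only difference is that the paper merely verifies that \eqref{equi-delay} is stationary and positive when $\mathcal{R}_0>1$, whereas your elimination derives it as the unique positive solution, which also makes the ``only if'' direction explicit.
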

\begin{proof}
    The fact that \eqref{equi-delay} are stationary states of the system \eqref{eqns:delay} is a direct calculation, in which we only have to observe that the integral terms give, for instance,
    \begin{equation}
        \int_{t-\tau_1-\tau_2}^t \frac{1}{\tau_h}E^* \,ds = E^*\frac{\tau_1+\tau_2}{\tau_h}.
    \end{equation}
    The positivity of the endemic equilibrium follows immediately from $\mathcal{R}_0>1$, while $E^*,E_v^*,I_v^* <1$ follow easily from the positivity of $\mathcal{R}_0$. We omit the (straightforward) details.
\end{proof}

\begin{remark}
    As we have seen, the uniform response model \eqref{eqns:delay} is the formal limit of the full system \eqref{eqns:FullModel} when the transition function $g(y)$ from the exposed to the infective compartment becomes a Dirac delta. We remark here that this limit can also be observed in the expression of $\mathcal{R}_0$ and of the equilibrium \eqref{equi-delay} and \eqref{R0}, in relation to the corresponding quantities for the full model, namely \eqref{equi-full} and \eqref{R_0Full},\eqref{defs1}. Indeed, when $g(y)$ is a Dirac delta (necessarily centered at $y^\star$), then the expression of the mean infectious permanence time $\Tcal_1$ in \eqref{defs1} reduces to $\tau_1(y^\star)$, which is the infective permanence time $\tau_1$ appearing in the UHR model. The same holds for the mean recovery time $\Tcal_2$ and the recovery period $\tau_2$ in \eqref{eqns:32}. In this way, we see that the expressions in \eqref{R0} and \eqref{equi-delay} are obtained as the limits of the corresponding quantities for the full system in the limit $g\to \delta_{y^\star}.$
\end{remark}

 \begin{remark}
    Let us recall that the foundational interpretation of the basic reproduction number is that ${R}_0$ corresponds to the {\em number of secondary infections caused by a single infected individual of the same type (host or vector) during its infectious period in a completely susceptible population} \cite{macdonald1952analysis}. With this in mind, we obtain an intuitive derivation of ${R}_0$ for the system \eqref{eqns:delay}, following the ideas presented in \cite{lopez2002threshold}. We find a similar threshold to the one established in our study. In fact, the expected number of infected mosquitoes generated by a human host as index case, is given by the product of the mosquito-to-human ratio $m$, the mosquito biting rate $b$, and the probability of transmission from the host to the vector $\beta_{hv}$ during its infectious period $\tau_1$. This quantity is expressed by $ m b \beta_{hv} \tau_1 $.
    These infected mosquitoes, during their average lifespan $1/\mu_v$, generate new host infections. These new infections depend on the mosquito biting rate $b$, the probability of transmission from vector to host $\beta_{vh}$, and the fraction of mosquitoes that survive the extrinsic incubation period $e^{-\mu_v \tau_v}$. Accordingly, the expected number of secondary human infections is given by
    \begin{equation*}
        m b \beta_{hv} \tau_1 \frac{b \beta_{vh} e^{-\mu_v \tau_v}}{\mu_v} = \frac{b^2 \beta_{vh} \beta_{hv} m \tau_1}{ \mu_v e^{\mu_v \tau_v} }.
    \end{equation*}
    Note that this expression differs from the basic reproduction number $\mathcal{R}_0$, defined in \eqref{R0}, only through a first-order Taylor approximation of the exponential term $e^{\mu_v \tau_v}$.
\end{remark}

\subsubsection{$\mathcal{R}_0$ and the stability of the disease-free equilibrium}
\label{sec:DFEstab}

So far, the reproduction number $\mathcal{R}_0$ acts as a threshold at $\mathcal{R}_0=1$ in the sense that the endemic equilibrium exists if, and only if, $\mathcal{R}_0>1$. This is true both for the full model (Prop.~\ref{prop:R0Full}) and for the UHR model (Prop.~\ref{prop:R0}). In fact, another desirable property of $\mathcal{R}_0$ would be that the disease-free equilibrium is asymptotically stable if, and only if, $\mathcal{R}_0<1.$ Although we were unable to prove such a result, we can make some comments. The UHR system can be seen as a delay-type system, and as such the study of the characteristic polynomial of the linearized system around the DFE yields a transcendental equation. Omitting the details for the sake of brevity, we are led to analyze the roots $\lambda$ of the equation
\begin{equation}\label{char-equation}
\begin{aligned}
     0 & =   \tau_h \tau_v \lambda^3 + \left( 2 \tau_h \tau_v \mu_v + \tau_h + \tau_v \right) \lambda^2 + \left(  \tau_h \tau_v \mu_v^2 +  \tau_h\mu_v + 2\tau_v\mu_v + 1 \right) \lambda 
     \\
     &  \quad +\left( \tau_v \mu_v^2 + \mu_v  \right)  + b^2 \beta_{vh} \beta_{hv} m   \frac{e^{-\lambda \tau_1} - 1}{\lambda}.   
     \end{aligned}
\end{equation}
From \eqref{char-equation} it is easy to see that, any \emph{real} root is negative if, and only if, $\mathcal{R}_0<1,$ which at least suggests that the desired stability threshold property holds. But, of course, one cannot exclude complex roots $\lambda$, and in that situation one obtains a transcendental equation for which we could not prove that $\Re(\lambda)<0$ whenever $\mathcal{R}_0<1.$

Still, all our numerical experiments, reported below, indicate that the threshold property of $\mathcal{R}_0$ holds, not only for the UHR model, but also, in fact, for the full model.

\section{Numerical Simulations}
\label{sec:Numerical}
%%%%%%%%%%%%%%%%%%%%%%%%%%%%%%%%%%%%%%%%%%%%%%%%%%%%%%%%%%%%%%%%%%

\subsection{Influence of within-host dynamics on epidemiological outcomes}

Consider first the uniform host response model \eqref{eqns:delay}. Recall that the reproduction number (which we repeat here for convenience) was found in Prop.~\ref{prop:R0} to be
\begin{equation}\label{R0a}
     \mathcal{R}_0 =  \frac{b^2 \beta_{vh} \beta_{hv} m \tau_1}{ \mu_v ( \mu_v \tau_v +1 ) }.
\end{equation}

Recall that the mean host infective permanence period $\tau_1$ was obtained in Proposition~\ref{prop:y} as the time measured along the (single) characteristic curve along which the infected compartment ``travels'' in the uniform host response model. $\tau_1$ is a function of the parameters appearing in~\eqref{eqns:tau1Characteristics}, namely $a_{1,2,3,4}$. Still, the $a_i$ do not appear in the uniform host response system \eqref{eqns:delay}. Therefore, $\tau_1$ can also be seen as a free parameter of that model.
In that case, the reproduction number $\mathcal{R}_0$ will be directly proportional to $\tau_1$, when the other parameters are kept fixed. This is in line with the expectation that a longer infectious period (i.e., a larger $\tau_1$) will worsen the epidemic outcome by increasing $\mathcal{R}_0$.

On the other hand, investigating the influence of the parameters $a_i$ on $\tau_1$ (and, by consequence, on $\mathcal{R}_0$) would help to understand how within-host dynamics can influence epidemiological outcomes in the context of our model. This can be useful in two complementary ways. First, as observed throughout this work, the within-host dynamics proposed are phenomenological and not intended to accurately reproduce physiological processes, but only the broad dynamics involved. In this sense, it is a challenging task to determine reasonable baseline values for $a_i$. But $\tau_1$ is an epidemiological parameter which can be experimentally determined for many diseases; therefore, taking $\tau_1$ (or $\mathcal{R}_0$) as a known value and adjusting the $a_i$ accordingly can be a good way to calibrate the parameters associated with the within-host dynamics. Conversely, the $a_i$ represent, in broad strokes, how the virus interacts with the immune system in the host throughout the pathogenesis process. Knowing which of their values has more or less influence on epidemiological variables such as $\mathcal{R}_0$ can help to determine areas of focus for research.

To further explore these questions, we investigate numerically the influence of $a_{1,2,3,4}$ on the value of the reproduction number $\mathcal{R}_0$.
We recall here the following facts from Table~\ref{tab:ModelParameters}:
\begin{itemize}
    \item $a_1$ is the virus growth rate in the host,
    \item $a_2$ is the viral decay rate per unit of antibody,
    \item $a_3$ is the background antibody decay rate, and
    \item $a_4$ is the antibody production rate per unit of viral load. 
\end{itemize}
According to these interpretations, it should be a feature of the model that:
\begin{itemize}
    \item $\mathcal{R}_0$ should increase with $a_1$ and $a_3$, and
    \item $\mathcal{R}_0$ should decrease when $a_2$ and $a_4$ increase.
\end{itemize}

In the following experiments we show that the model reproduces these expectations.

\subsubsection*{Baseline parameters}

We collect in Table~\ref{tab:parameters} realistic baseline values for the parameters that do not involve  within-host dynamics. Most of these have well-established ranges in the literature, see for instance \cite{massad2010estimation,feng1997competitive,kribs2025impact,gulbudak2020infection}.

\begin{table}[ht]
    \centering
    \caption{Parameters and corresponding baseline value for the system \eqref{eqns:delay}.}
    \label{tab:parameters}
    \begin{tabular}{lll}
    \hline
    Parameter & Description & Baseline value \\
    \hline
        $b$ & biting rate & 1/3\\
        $\beta_{vh}$ & Vector to host transmission probability & 0{,}25\\
        $\beta_{hv}$ & Host to vector transmission probability & 0{,}2\\
        $m$ & Mosquito-to-host ratio & 6\\
        $\mu_{v}$ & Natural vector death rate & 0{,}05\\
        $\tau_h$ & intrinsic incubation period & 7\\
        $\tau_v$ & extrinsic incubation period & 10\\
        $\tau_1$ & Infectious period & $\sim$ 7\\
        $\mathcal{R}_0$ & Basic reproduction number & $\sim$ 3\\
        \hline
    \end{tabular}
\end{table}
% \DM{
% $b$, $m$, $\tau_1 = 1/\gamma$, $\tau_v$, $\mu_v$ are similar from \cite{massad2010estimation}, but its not a numerical paper, just data based on real cases.
% %
% \cite{feng1997competitive} presents some parameters for dengue fever, but they differs from ours.
% % A Systematic Review of Mathematical Models of Dengue Transmission and Vector Control: 2010–2020 --> Nao tem data nem parametros mas lista referencias no assunto
% %
% Check references from table 1.
% %
% \cite{kribs2025impact} present similar values for $\mu_v$.
% %
% {zahid2023} estimates parameters from data.
% %
% \cite{gulbudak2020infection} similar $\mu_v$.
% }

As explained above, we look for values of $a_1,\dots,a_4$ such that the resulting $\tau_1$ is close to the reference value in Table~\ref{tab:parameters}. Since $\mathcal{R}_0$ is also a given, this imposes a constraint on the combination of the remaining parameters that appear in the expression of $\mathcal{R}_0$, namely,
\begin{equation}
    \frac{b^2 \beta_{hv}\beta_{vh}m}{\mu_v(1+\tau_v\mu_v)} = \frac{\mathcal{R}_0}{\tau_1} \simeq \frac{3}{7} = 0.429\dots.
\end{equation}

This constraint is satisfied using the values in Table~\ref{tab:parameters}. The set of admissible  $(a_1,a_2,a_3,a_4)$ is then the set $D_a\subset \RR^4$ such that $4a_2a_4 > (a_1+a_3)^2$ (cf.~\eqref{eqns:beta}) and $\tau_1 \in (5,9).$ Since $\tau_1$ is a continuous function of $(a_1,a_2,a_3,a_4)$ (from classical results on the continuous dependence of ODEs with respect to parameters), $D_a$ is an open set. However, it is difficult to gain insight into its structure. For that reason, we performed a random search
 among the 4-vectors $(a_1,a_2,a_3,a_4)$ in the domain $(0, 10)^4$ to select values of the $a_i$ in $D_a$. In this way we produced a set $D \subset D_a$ with {100} points.
 
 We performed a K-Means clustering analysis of the set of 4-vectors $D$, but did not find significant multimodal clustering. This suggested taking the centroid of $D$ as a representative value of the set of admissible $a_i$. However, since no convexity property of $D_a$ is guaranteed, this generally did not yield a point belonging to $D_a$.

Therefore, we chose the baseline point $(a_1,a_2,a_3,a_4)$ by selecting from the set of samples $D$ one yielding a numerical solution spiraling towards the equilibria of the system, and such that the characteristic curves were easily visualized:
\begin{equation}\label{eqns:ref_as}
    (a_1,a_2,a_3,a_4)_\text{ref} = (1,\ 0.44,\,0.72,\,1.93).
\end{equation}

The resulting infective period $\tau_1$ is $6.4$ days, the reproductive number is $\mathcal{R}_0 = 2.86,$ and the recovery period is $\tau_2 =70$ days with $a_5=0.01$. 

\subsubsection*{Reference solutions}
In Fig.~\ref{fig:refsol} we plot the time series of the solution to the UHR system with the parameters of Table~\ref{tab:parameters}. We can observe the convergence of the solution to the endemic equilibrium values (horizontal lines) predicted by Prop.~\ref{prop:R0}.

In Figs.~\ref{fig:refsolfull} and \ref{fig:ref_full_infected} we plot the solution to the full system \eqref{eqns:FullModel2}. To simulate the transport equation for $I$, we implemented a first-order upwind method, on a $200 \times 340$ grid on the $(z,y)$ computational domain shown in Fig.~\ref{fig:ref_full_infected}. The resulting numerical solution is known to be too dissipative \cite{leveque2002finite}, and so it is difficult to observe clearly the transport of the infective compartment along the direction of the characteristic curves, also plotted in Fig.~\ref{fig:ref_full_infected}. We choose $g(y)$ as a gaussian with variance $0.2$, centered on the midpoint of the interval $(0,y_0=2.27)$, and we can see that the time evolution of the compartments (Fig.~\ref{fig:refsolfull}) is qualitatively similar to the solution of the UHR system (Fig.~\ref{fig:refsol}). The smearing out of the $I$ compartment, evident in Fig.~\ref{fig:ref_full_infected}, can be minimized by using high-resolution schemes for hyperbolic equations \cite{leveque2002finite}, although we leave this to future work. We also plot the equilibrium values predicted by Prop.~\ref{prop:R0Full}. The slight discrepancy observed (especially in the $R$ component) may be ascribed to the diffusive nature of the numerical approximation of the $I$ and $R$ equations, and to discretization errors in the integral terms present in the expressions of the equilibria \eqref{equi-full}. In particular, it is natural that $I$ (and, by consequence, $R$) is underestimated by our numerical procedure, since the numerical diffusion may lead to mass leakage along the border of the computational domain. 

None of the previous drawbacks apply to the simulation of the UHR system, and so the remaining numerical experiments will focus mainly on its simulation. At the same time, the greater versatility of the full system, in the sense that it can account for uncertainty in the infective and recovery times of the population, suggests that it is worthwhile to develop more accurate numerical methods for its simulation.

\begin{figure}
    \centering
    $$\includegraphics[width=1\linewidth]{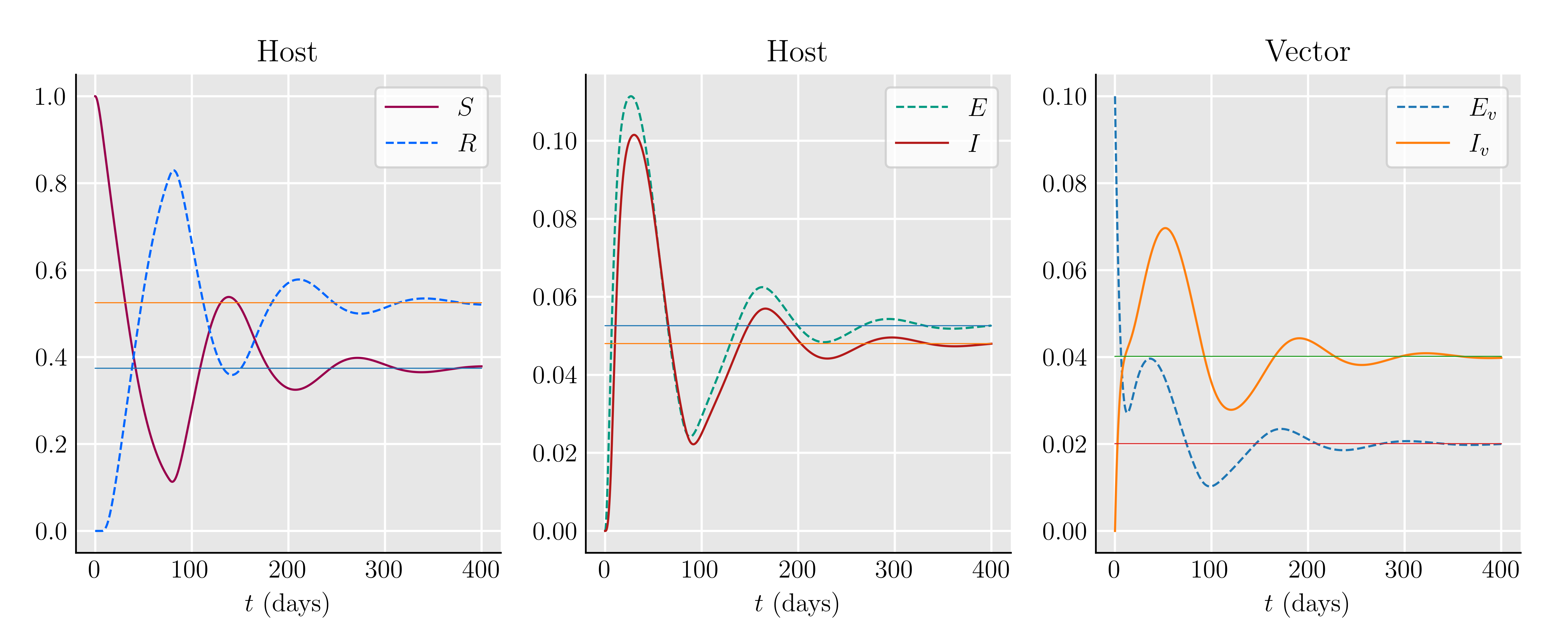}$$
    \caption{Reference solution time series for the Uniform Host Response model \eqref{eqns:delay}. The parameters used are in Table~\ref{tab:parameters}, along with \eqref{eqns:ref_as} and $a_5=0.01.$ The host infective period $\tau_1$ is $6.4$ days, the recovery period is $\tau_2=70$ days, and the reproductive number $\mathcal{R}_0$ is $2.86$. Horizontal lines are the equilibrium values.}
    \label{fig:refsol}
\end{figure}
\begin{figure}
    \centering
    \includegraphics[width=1\linewidth]{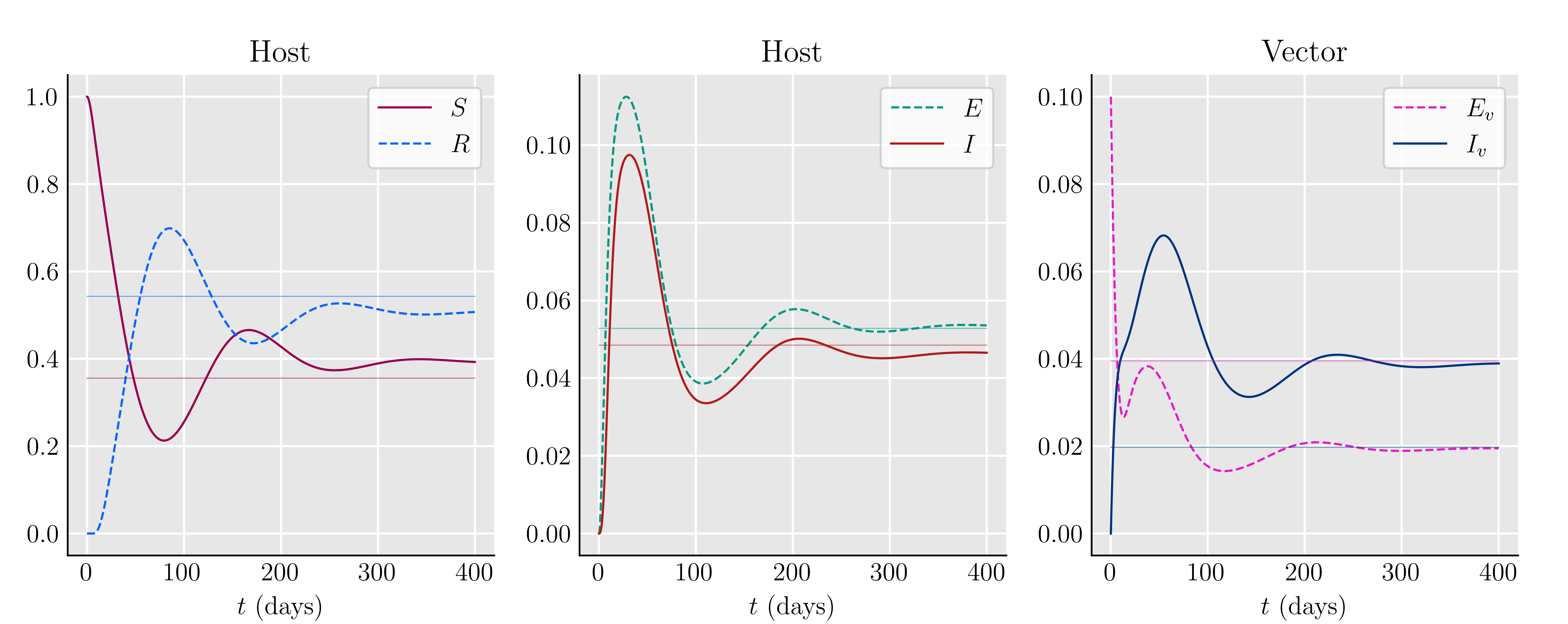} 
    \caption{Reference solution for the full model \eqref{eqns:FullModel2}. The parameters used are in Table~\ref{tab:parameters}, along with \eqref{eqns:ref_as} and $a_5=0.01.$ Here, $R$ and $I$ represent the integrals of $R(t,y)$ and $I(t,z,y)$ for each $t>0$. }
    \label{fig:refsolfull}
\end{figure}
\begin{figure}
    \centering
    \includegraphics[width=0.7\linewidth]{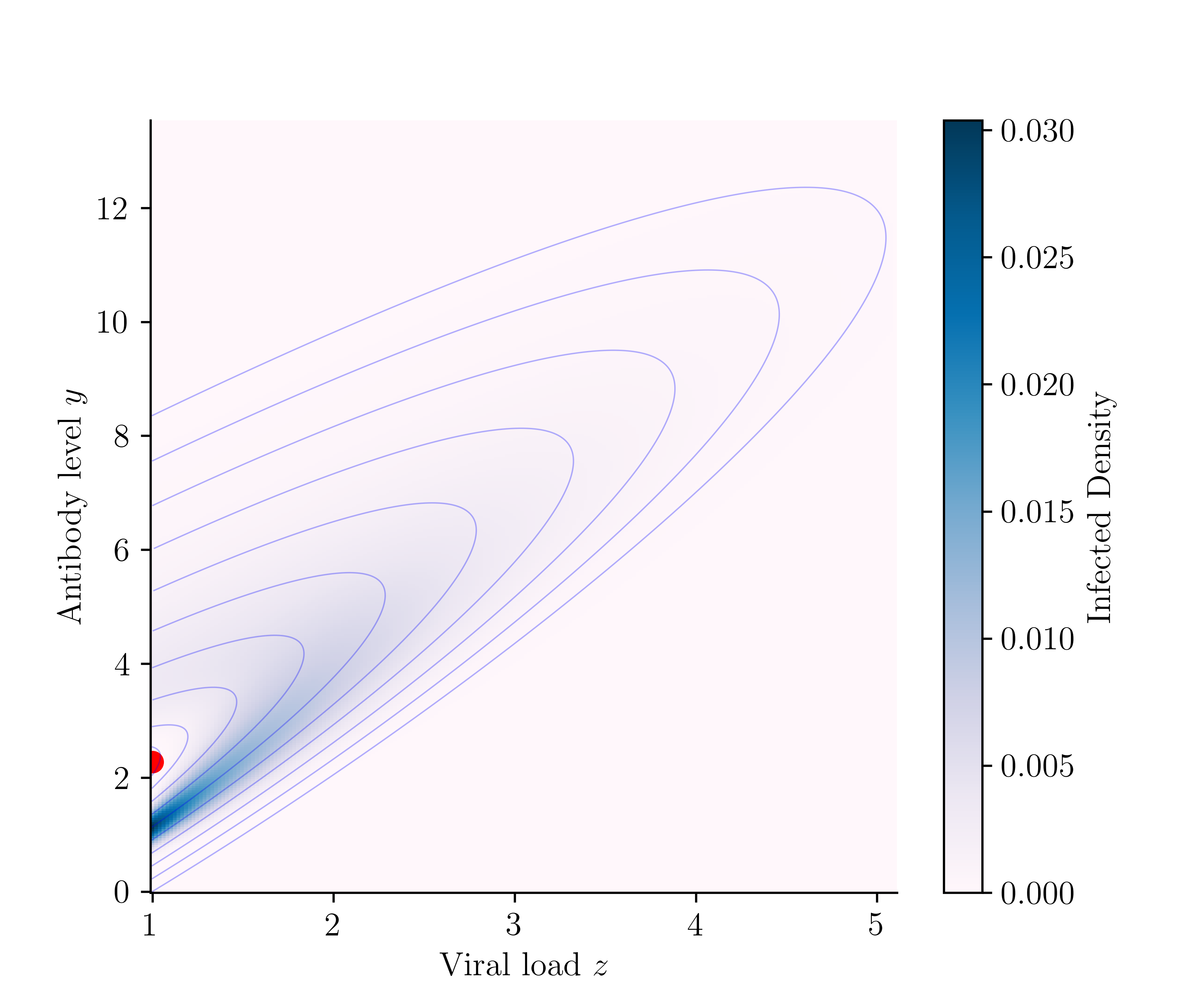}
    \caption{Density of the infective population $I(t,z,y)$ at the final time $t=400$ days for the reference solution of the full model \eqref{eqns:FullModel2}. The blue lines are characteristic curves of the transport equation for $I$.}
    \label{fig:ref_full_infected}
\end{figure}

\subsubsection*{Influence of $a_1,a_3$}

\begin{figure}
\centering
$$\includegraphics[width=0.5\linewidth]{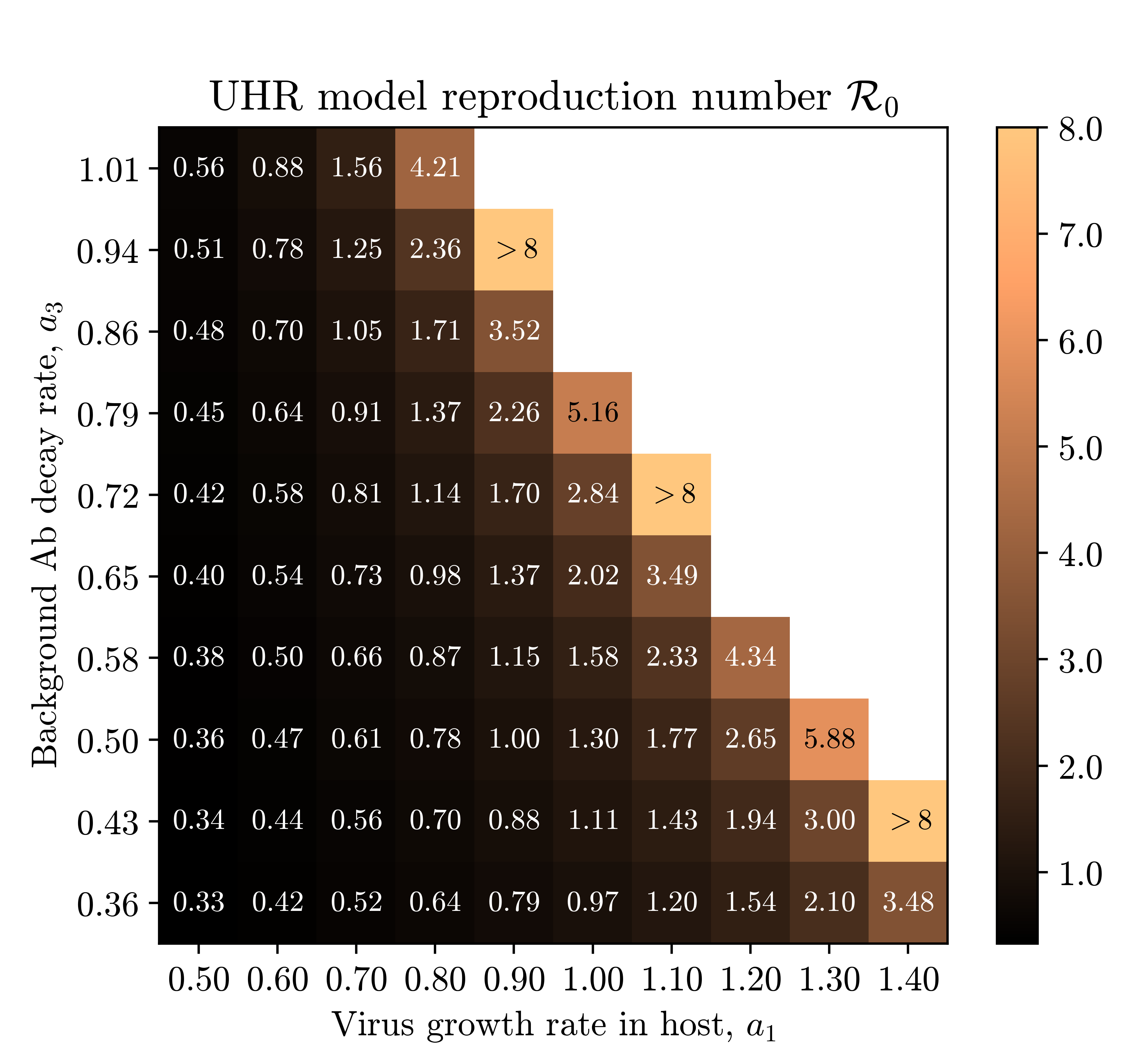}
\includegraphics[width=0.5\linewidth]{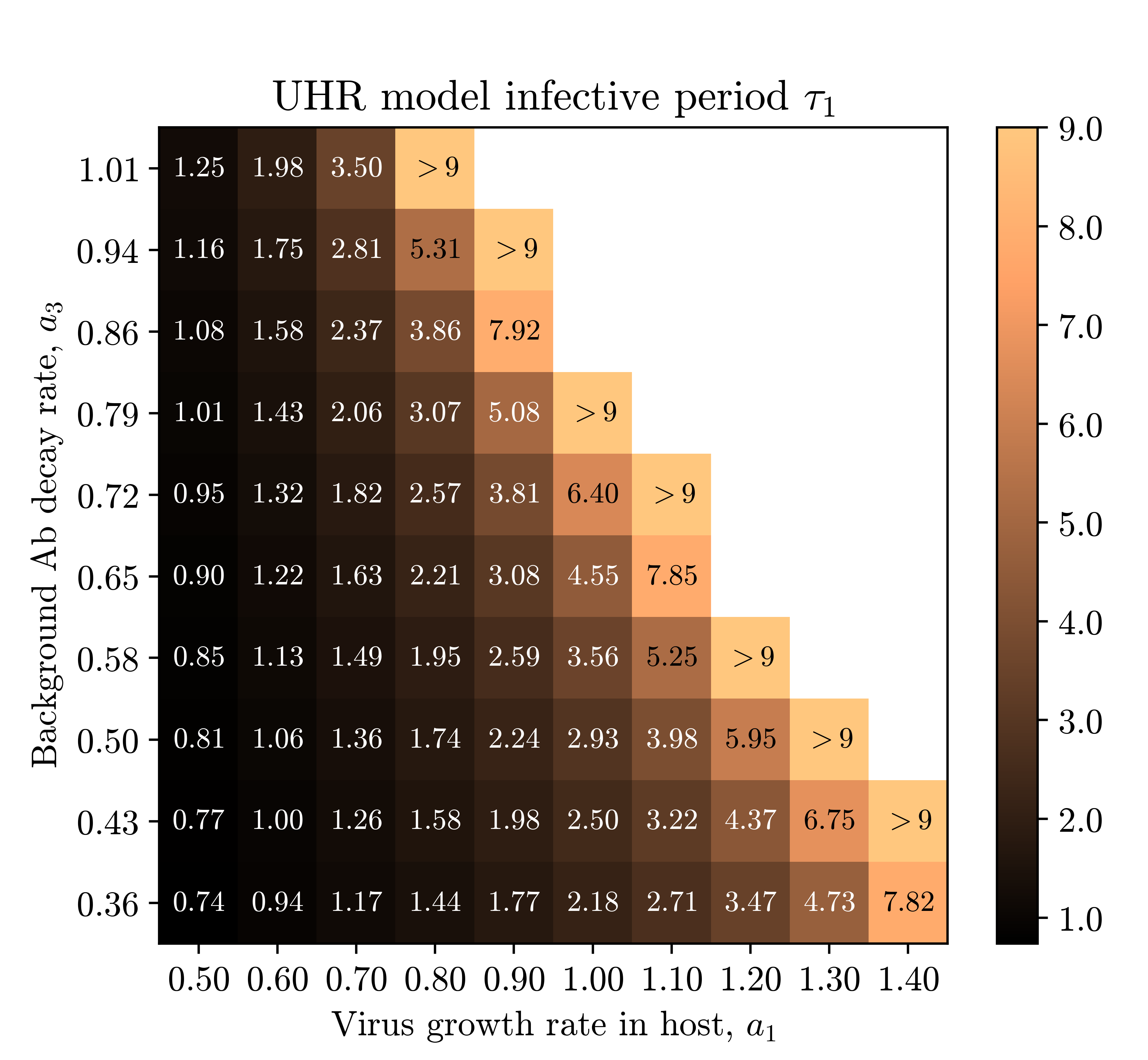}$$
\caption{$\mathcal{R}_0$ and $\tau_1$ for the UHR model as a function of $a_1$ and $a_3$, perturbed around the reference solution. The blank region corresponds to non-admissible $a_1,a_3$ values such that $\beta \not\in \RR$ (cf.~\eqref{eqns:beta}).}
\label{fig:R0a1a3}
\end{figure}

In Fig.~\ref{fig:R0a1a3}, we plot the values of $\mathcal{R}_0$ and the infective period $\tau_1$ for the UHR model as a function of $a_1$ and $a_3$. The parameters used are the baseline ones in Table~\ref{tab:parameters}, and the values of $a_1$ and $a_3$ are perturbations around the reference value in \eqref{eqns:ref_as}. We can observe that the value of $\mathcal{R}_0$ increases when $a_1$ and $a_3$ increase, but interestingly, the dependence of $\mathcal{R}_0$ on the Ab decay rate $a_3$ is weaker, especially for lower values of $a_1$, in the parameter ranges considered. This suggests that for small values of $a_1$ (that is, very low viral growth rate), the infection cannot establish itself anyway, and so the available antibodies will be sufficient to control the infection, independently of their decay rate. In particular, we see that the threshold $\mathcal{R}_0 =1$ is determined mainly by the value of the viral growth rate $a_1$ in this situation.

\subsubsection*{Influence of $a_2,a_4$}

\begin{figure}
\centering
$$\includegraphics[width=0.5\linewidth]{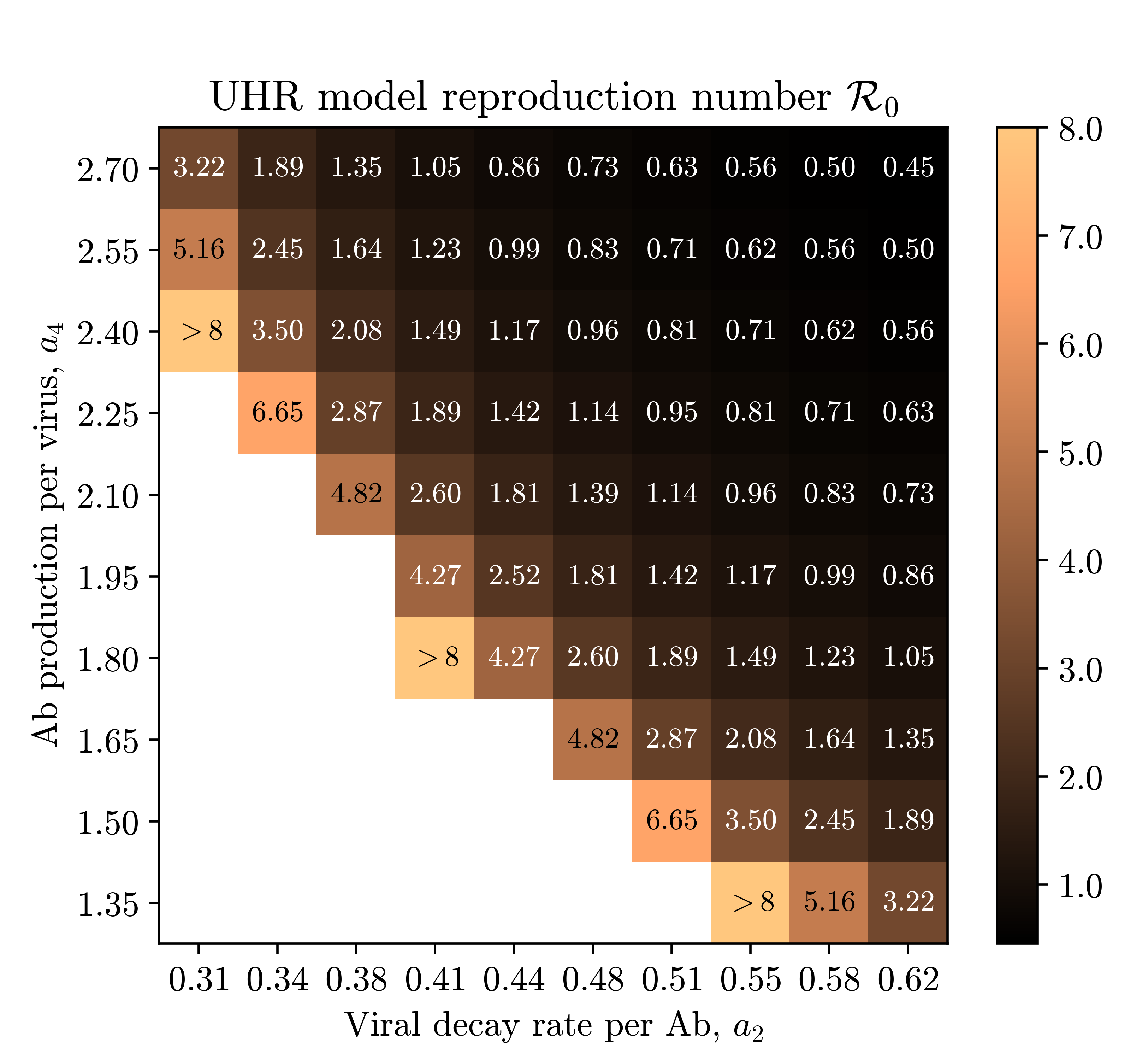}
\includegraphics[width=0.5\linewidth]{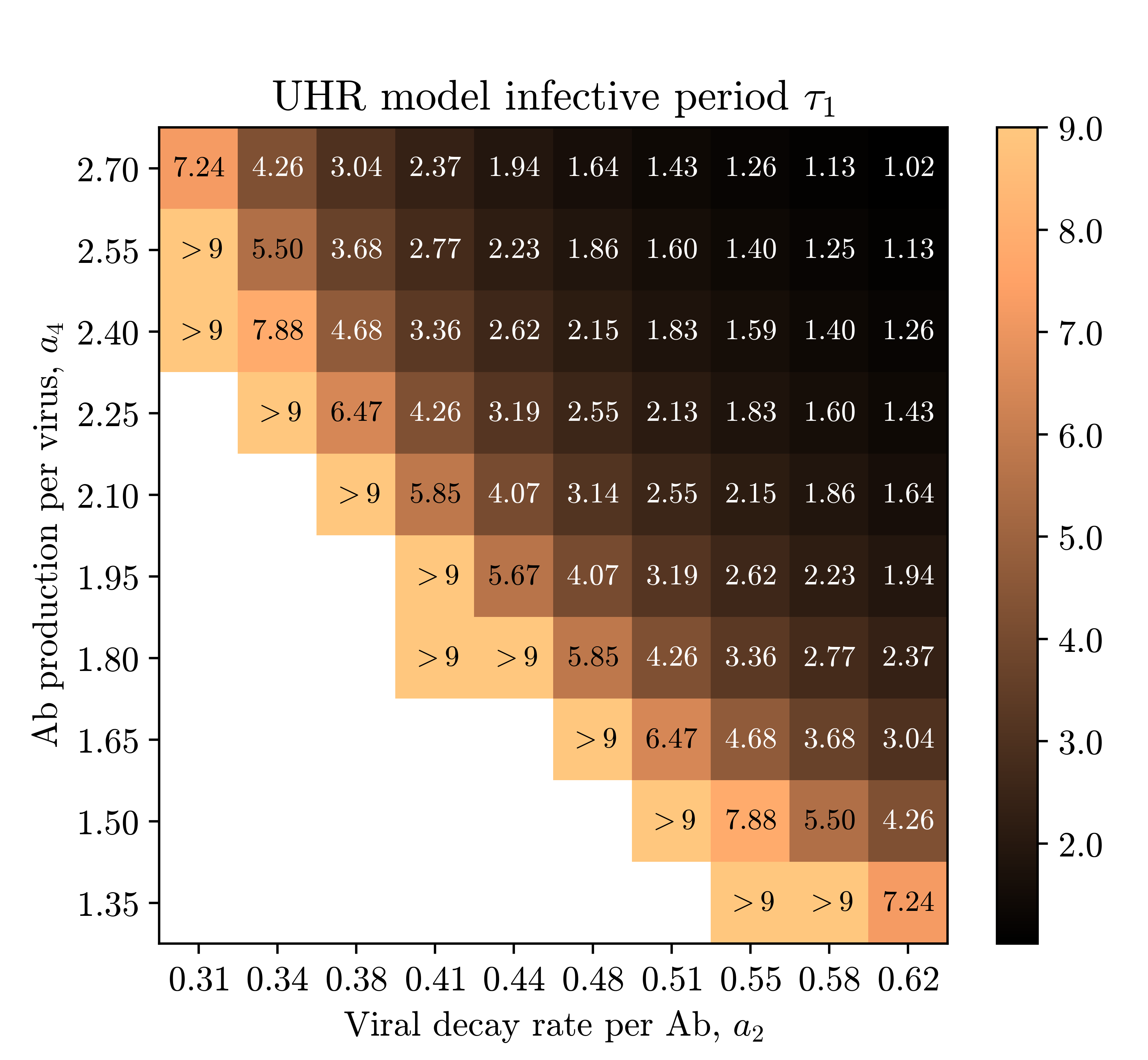}$$
\caption{$\mathcal{R}_0$ and $\tau_1$ for the UHR model as a function of $a_2$ and $a_4$, perturbed around the reference solution. The blank region corresponds to non-admissible $a_2,a_4$ values such that $\beta \not\in \RR$ (cf.~\eqref{eqns:beta}).}
\label{fig:R0a2a4}
\end{figure}

In Fig.~\ref{fig:R0a2a4}, we plot the values of $\mathcal{R}_0$ and $\tau_1$ as a function of the virus decay rate $a_2$ and the Ab production rate $a_4$. As expected, the value of $\mathcal{R}_0$ now decreases with both $a_2$ and $a_4$. 
The remark~\ref{rmk:tau} shows that the values in Fig.~\ref{fig:R0a2a4} are symmetric in $a_2,a_4$ and constant along the lines $a_2a_4 = C$. But this fact is a property of the particular within-host dynamics chosen. Other forms of dynamics with the same qualitative behavior might not share this precise symmetry property. Still, the experiment suggests that in similar models, the viral decay rate and the antibody production rate will have a similar influence on $\mathcal{R}_0$.

In both Figs.~\ref{fig:R0a1a3} and \ref{fig:R0a2a4}, we can confirm that the range of parameters $a_i$ which give $\mathcal{R}_0$ and $\tau_1$ in some desired range (that is, to fit with observations of a particular disease) appears to be narrow. This shows that the macroscopic dynamics is quite sensitive to the details of the within-host dynamics, at least in the context of the present class of models. In turn, this reinforces the idea that within-host dynamics can play an important role in the study and control of infectious diseases such as Dengue.

\subsubsection*{Influence of the antibody level of newly infected hosts, $y^\star$}

In both the full model and the uniform host response model, the population in the exposed compartment $E$ transitions to the infective compartment according to an exponential law, through the term $-\frac{1}{\tau_h}E(t)$.
The parameter $y^\star\in [0,y_0)$ represents the mean antibody level of individuals who reach the viral load $z_0$ at the onset of infection, after which they can transmit the infection. Recall that the UHR model was deduced from the full model \eqref{eqns:FullModel} precisely under the assumption that all newly infected have the same $y^\star$. Supposing that the within-host dynamics are described by our model, it should also be possible to find the value of $y^\star$ experimentally, in each application to a concrete disease.

It turns out that the value of $y^\star$ (or, more conveniently, the proportion $\alpha = y^\star / y_0 \in [0,1)$, which we use as a parameter) has a strong impact on the reproduction number $\mathcal{R}_0$. This is not unexpected, since we can see from the typical characteristic trajectory in Fig.~\ref{fig:Icharacterists} that if $\alpha$ is close to $1$, then the characteristic starting at $(z_0,y^\star)$ very quickly reaches the exit point $(z_0,y^+)$; in contrast, the characteristics originating from points with $y^\star \simeq 0$ stay in the infective region for much longer and therefore can contribute much more to new infections, increasing $\mathcal{R}_0$.

In Figs.~\ref{fig:ys1}-\ref{fig:ys4} we confirm this expectation. We consider values of $\alpha = y^\star/y_0$ ranging from $0.05$ to $0.95$ and simulate the UHR system using the reference parameters defined earlier, changing only $y^\star.$ Apart from the clear influence in the asymptotic values obtained (Figs.~\ref{fig:ys1},\ref{fig:ys2}), also the oscillating character of the solutions varies with $\alpha$, becoming more pronounced for smaller values of $\alpha$ (corresponding to larger $\mathcal{R}_0$).
\begin{figure}
    \centering
    \includegraphics[width=1\linewidth]{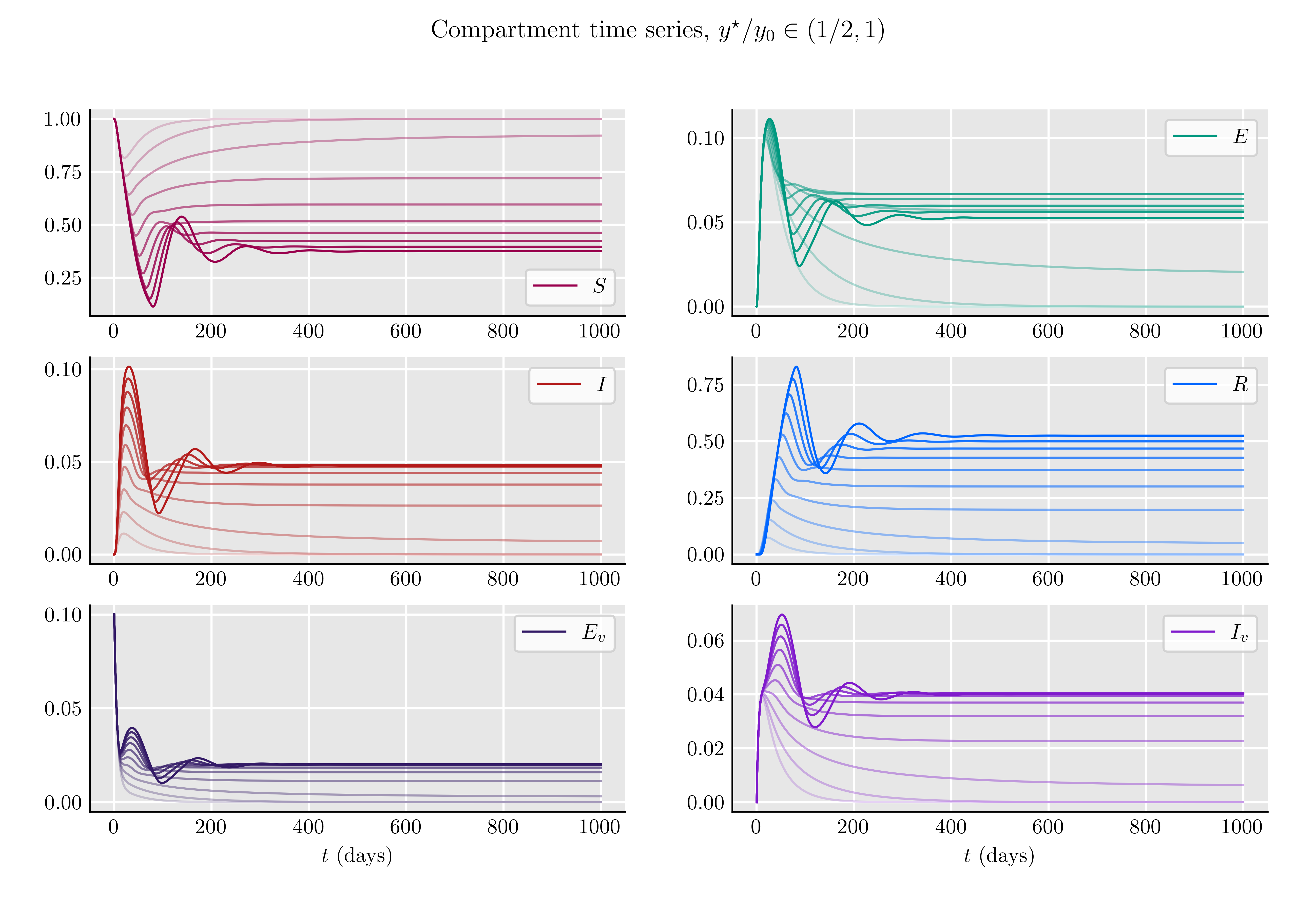}
    \caption{Compartment time series for varying $y^\star/y_0\in (1/2,1).$ Lighter curves correspond to higher values of $y^\star/y_0.$}   
    \label{fig:ys1}
\end{figure}
\begin{figure}
    \centering
    $$\includegraphics[width=0.7\linewidth]{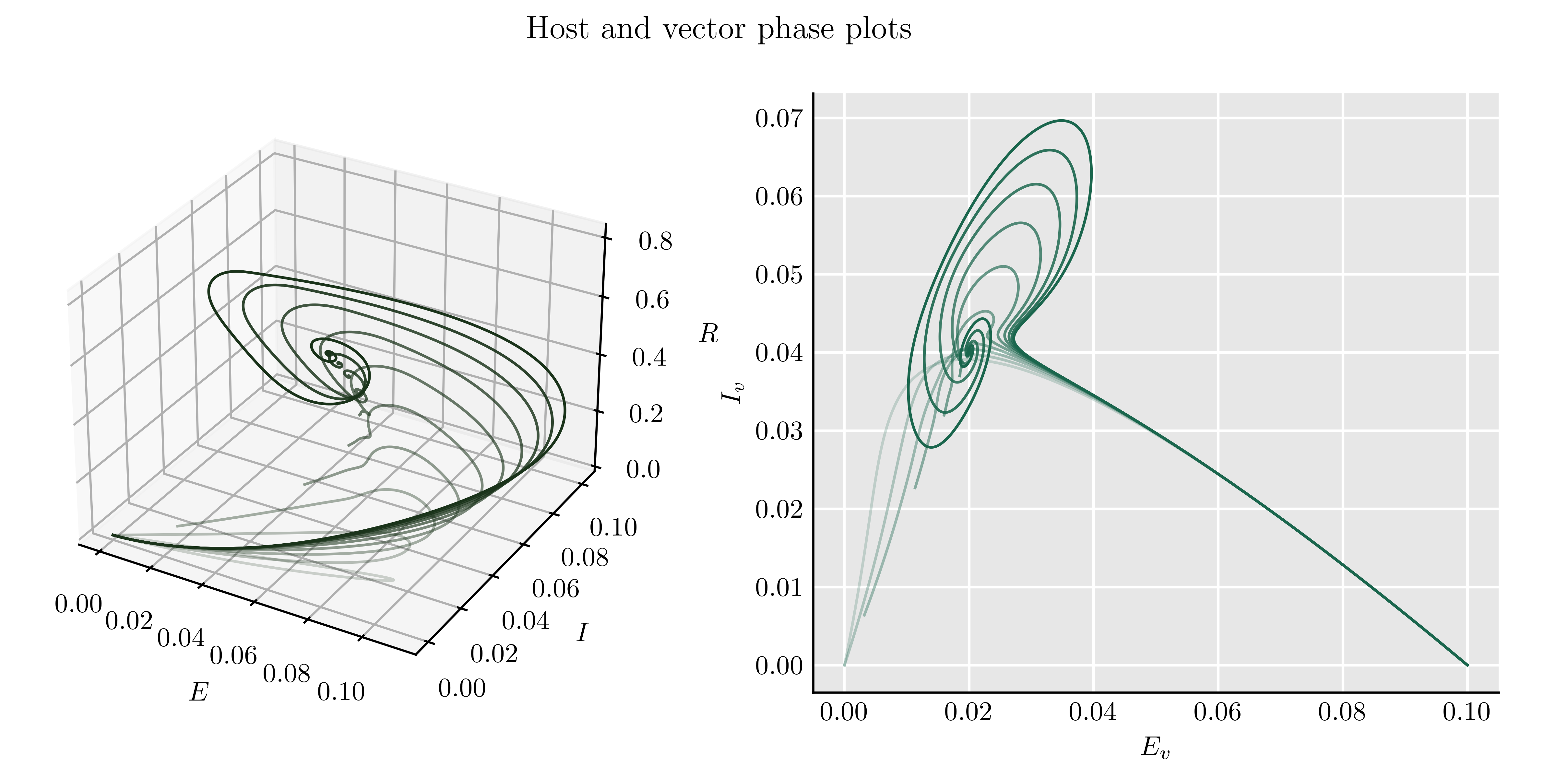}\includegraphics[width=0.3\linewidth]{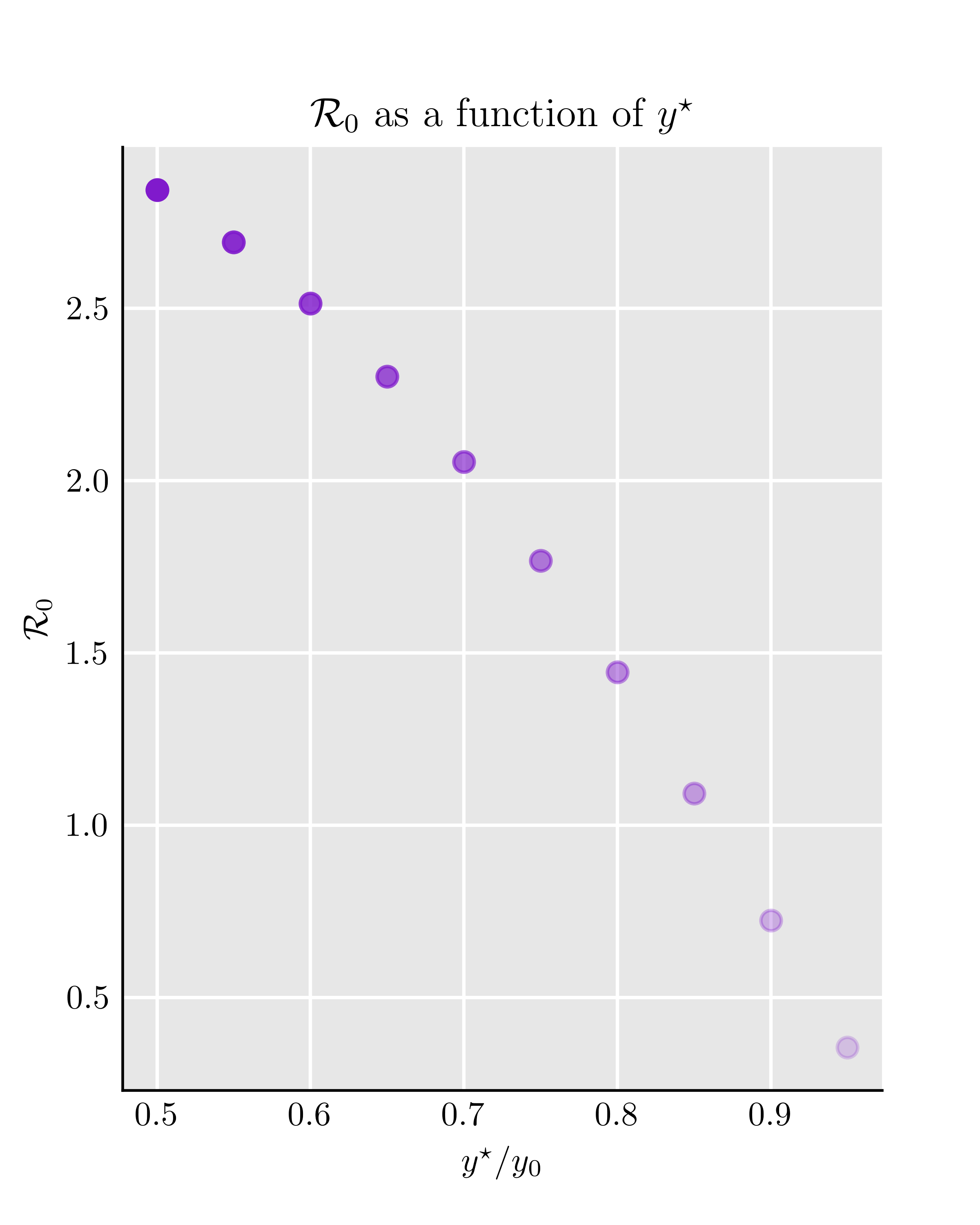}$$
    \caption{Phase trajectories of the solutions with $y^\star/y_0\in (1/2,1)$ (left, center). $\mathcal{R}_0$ as a function of $y^\star/y_0 \in (1/2,1)$ (right).}   
    \label{fig:ys2}
\end{figure}
\begin{figure}
    \centering
    \includegraphics[width=1\linewidth]{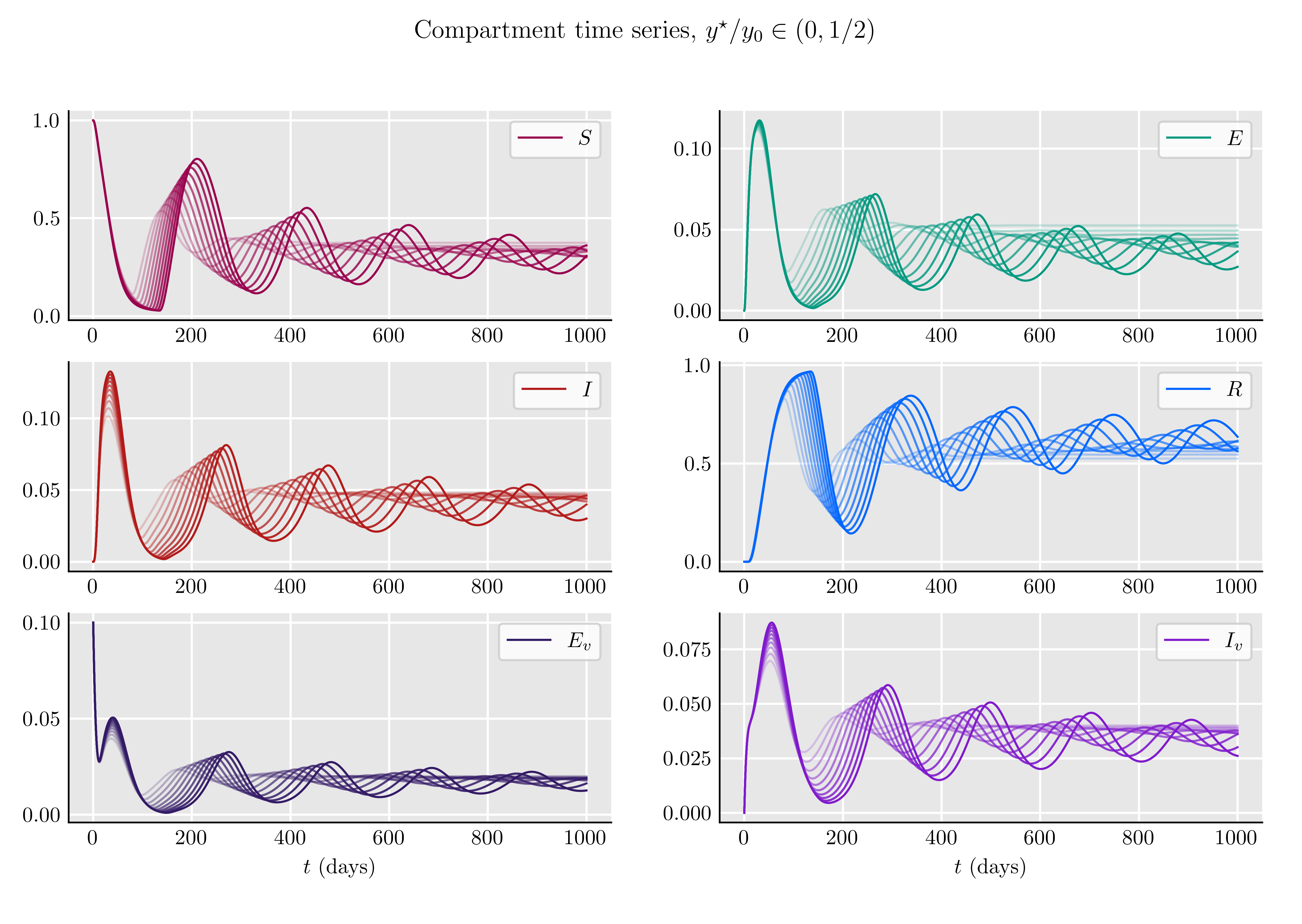}
    \caption{Compartment time series for varying $y^\star/y_0\in (0,1/2).$ Lighter curves correspond to higher values of $y^\star/y_0.$}   
    \label{fig:ys3}
\end{figure}
\begin{figure}
    \centering
    $$\includegraphics[width=0.7\linewidth]{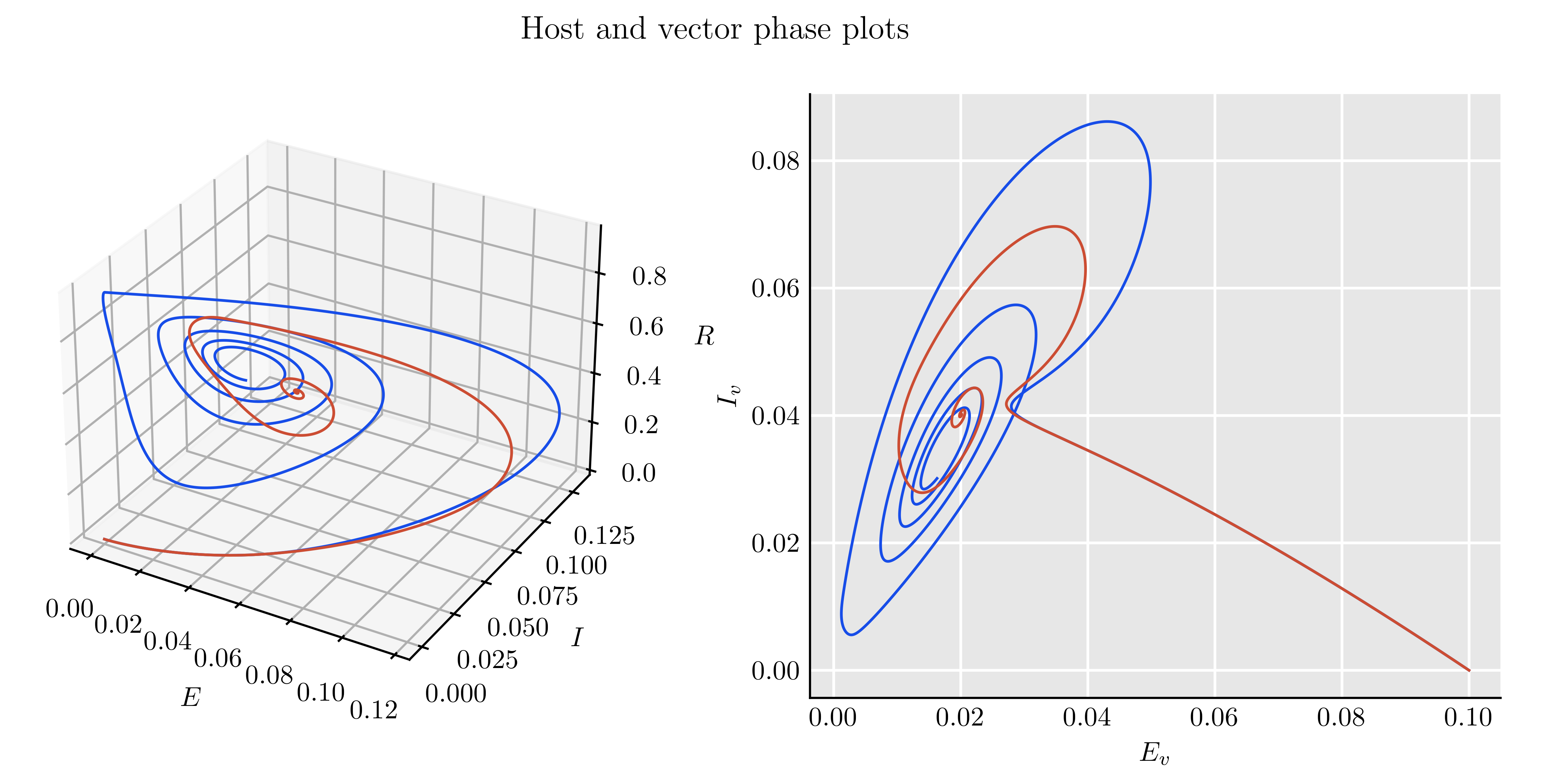}\includegraphics[width=0.3\linewidth]{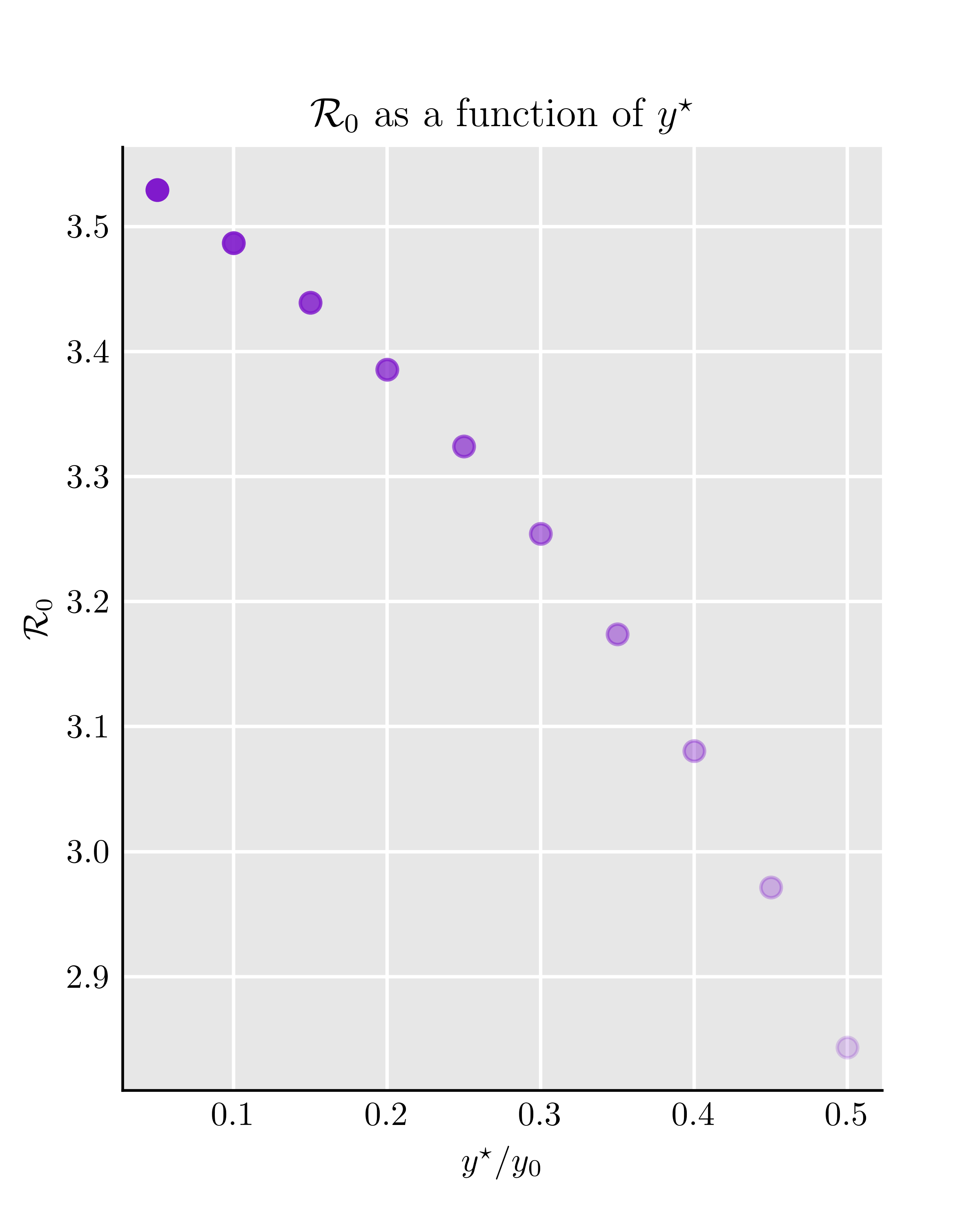}$$
    \caption{Phase trajectories of the solutions with $\alpha =y^\star/y_0\in (0,1/2)$ (left, center). For clarity, only $\alpha = 0.05$ and $1/2$ are shown.  $\mathcal{R}_0$ as a function of $y^\star/y_0 \in (0,1/2)$ (right).}   
    \label{fig:ys4}
\end{figure}

\subsection{Numerical investigation of $\mathcal{R}_0$ and the stability of the disease-free equilibrium}

We saw in Section \ref{sec:DFEstab} how a rigorous result linking the stability of the DFE and the value of $\mathcal{R}_0$ is challenging to obtain, even for the UHR model. In the next experiment, we show that, numerically at least, $\mathcal{R}_0 <1$ is equivalent to asymptotic stability of the DFE. To do this, we perform 100 simulations of the UHR model \eqref{eqns:delay} until $t_f=10^4$, each with a different value of $\mathcal{R}_0$, and plot the value of $I(t_f)$. In Fig.~\ref{fig:R0} we can see that $\mathcal{R}_0=1$ acts as a threshold between asymptotic stability and instability of the DFE. Since $\mathcal{R}_0$ is a derived parameter of the model, we achieved its variation by changing the value of $m$. 

We expect the $\mathcal{R}_0$ of the full system, \eqref{R_0Full}, to also serve as a threshold value for the stability of the DFE. Our experiments in this direction (omitted here for the sake of brevity) highlighted the previously mentioned difficulty in the numerical simulation of the transport equations. We performed for the full system the same series of simulations as described for the UHR system, varying $\mathcal{R}_0$. We observed a clear threshold value, just as in the UHR system, however this value was slightly above $\mathcal{R}_0=1$. As the mesh was refined, the threshold gets closer and closer to $\mathcal{R}_0=1$ from above, but the computation became unrealistic due to the fine mesh and the correspondingly small time step. In any case, the experiment strongly suggests that the threshold property also holds, but better numerical methods for the transport equation are needed to properly investigate this.

    % \begin{figure}[ht]
    %     \centering
    % \includegraphics[width=0.75\linewidth]{Figs/\mathcal{R}_0xSandR_FinalTime1000_ModeloPaper.png}
    % \caption{$\mathcal{R}_0$ versus endemic equilibrium, for a final time $T_f = 10^3$.}
    % \label{fig:R0}
    % \end{figure}

    \begin{figure}[ht]
        \centering
    \includegraphics[width=0.75\linewidth]{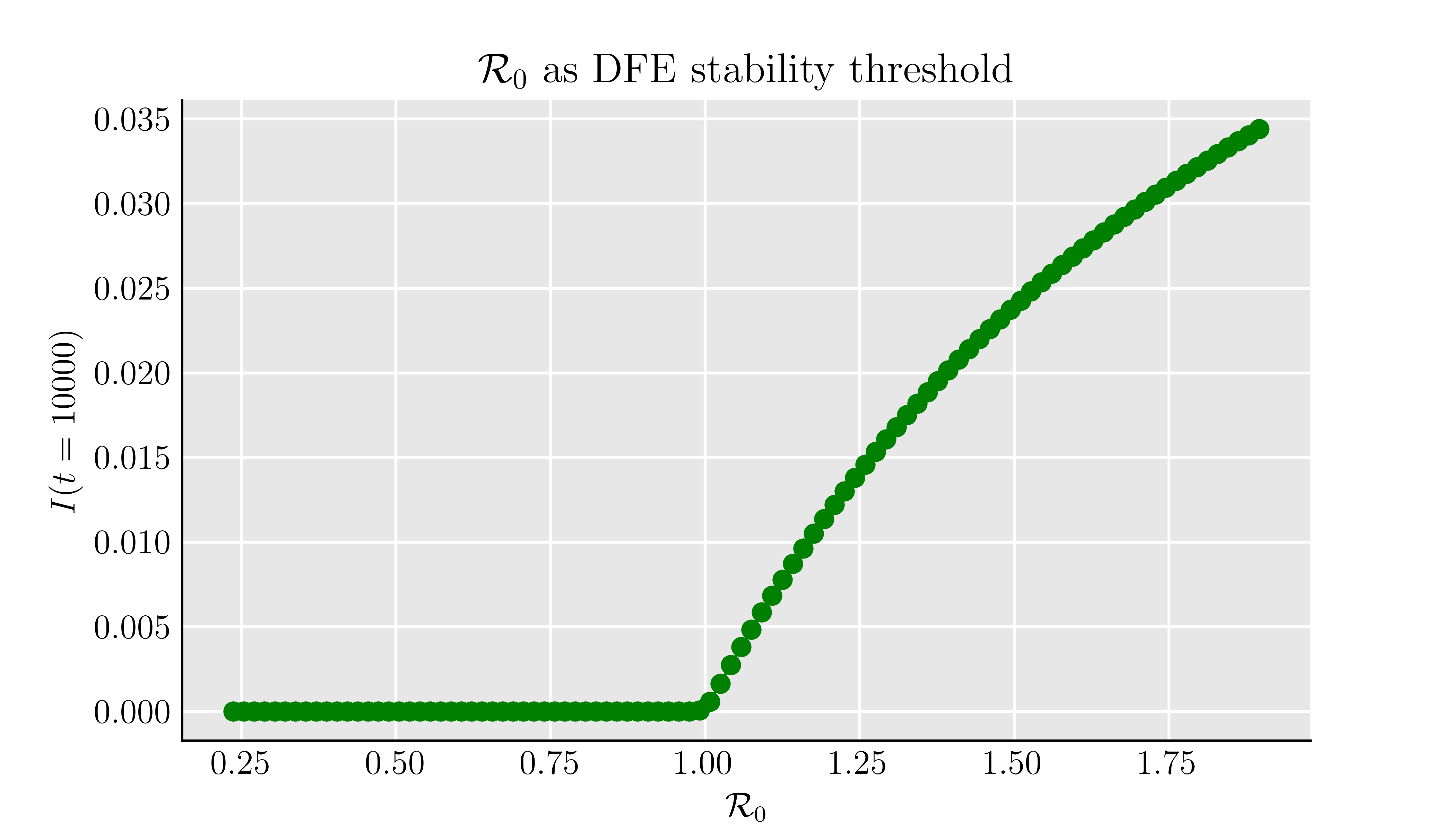}
    \caption{$\mathcal{R}_0$ versus asymptotic value of $I(t_f)$ at $t_f=10^4$.}
    \label{fig:R0}
    \end{figure}

%
%
%
%%%%%%%%%%%%%%%%%%%%%%%%%%%%%%%%%%%%%%%%%%%%%%%%%%%%%%%%%%%%%%%%%%
\section{Discussion}
\label{sec:Discussion}
%%%%%%%%%%%%%%%%%%%%%%%%%%%%%%%%%%%%%%%%%%%%%%%%%%%%%%%%%%%%%%%%%%

In this work, we introduced a host–vector model for vector-borne disease transmission that integrates within-host viral and immune dynamics into a population-level framework. By structuring the infected host population according to viral load and antibody level, the model captures the interaction between pathogen replication and the host immune response. This approach allows the model to represent the heterogeneity among infected individuals arising from differences in infection stage and immune status, which may play an important role in determining disease spread. In addition, the recovered population is structured by antibody level, reflecting the persistence and gradual waning of immunity after viral clearance and allowing the model to represent temporary protection against reinfection.

From a mathematical perspective, we established several fundamental properties of the system, including well-posedness, the characterization of characteristic curves, and the derivation of the basic reproduction number 
$\mathcal{R}_0$ together with the corresponding disease-free equilibrium. {We prove, both for the full model and the UHR model, that the existence of an endemic equilibrium is equivalent to $\mathcal{R}_0>1$. The reproduction number for the full model, defined in Prop.~\ref{prop:R0Full}, has a nontrivial form involving nonlocal terms.} To further explore the qualitative behavior of the system, we also considered a simplified formulation associated with a uniform host immune , which we called the Uniform Host Response system (or UHR system for short), which is mathematically simpler and allows to overcome challenges in the numerical simulation, thereby providing additional  insight into the dynamics of the model.

The numerical simulations illustrate how the interaction between viral dynamics, immune response, and vector transmission influences the course of the epidemic. In particular, the results highlight the sensitivity of the system to key epidemiological and immunological parameters. Overall, the proposed framework provides a new setting for investigating how individual-level infection dynamics may shape transmission patterns at the population level.

{The model has, of course, some limitations. For instance, numerical simulation of the full system (which accounts for variability in the host immune response) is challenging, as hyperbolic equations require high-resolution methods to yield useful numerical solutions. This limits the dynamical properties we could showcase for the full model, but can be overcome in future works by employing more sophisticated numerical methods. Also, the within-host modeling is phenomenological, in the sense that it is not derived from first principles. However, the model is built to allow other kinds of deterministic interactions between the antibody and virus, which can be encoded in the velocity terms in equation~\eqref{eqns:FullModel2}. Additionally, the numerical results we presented have shown how it is possible to narrow down the appropriate ranges for the parameters related to within-host dynamics, and to estimate how impactful each of them is on disease outcomes.}

A natural extension of the model would be to incorporate multiple dengue serotypes and account for the possibility of an exacerbated immune response during secondary infections with a different serotype, a phenomenon associated with antibody-dependent enhancement.

\section{Technical proofs}
\label{sec:proofs}
\subsection{Proof of Proposition \ref{prop:cons}}

% \begin{proof}
  %\todo[inline]{Prova}
  
  The system \eqref{eqns:FullModel} gives immediately
  \begin{equation*}
    \begin{aligned}
        \frac{d}{dt}& \Big(  S(t) + E(t) + \int_0^{\infty}\!\!\int_{z_0}^{\infty} {I(t,z,y)}  \,dz dy + \int_{y_0}^{+\infty} R(t,y) \,dy  \Big) 
        = a_5y_0 R(t,y_0) - \frac{1}{\tau_h}E(t) 
        \\
        &\quad - \int_0^{\infty}\!\!\int_{z_0}^{\infty}\frac{\partial }{\partial z} \big(( a_1 z - a_2 y) I(t,z,y)  \big) + \frac{\partial }{\partial y} \big( (-a_3 y  + a_4 z ) I(t,z,y)  \big)  \,dz dy 
        \\
        & \quad + \int_{y_0}^{+\infty} \frac{\partial }{\partial y} \left( a_5 y R(t,y) \right) \,dy - \int_{y_0}^{+\infty} I(t,z_0, y) \bigl( a_1 z_0 - a_2 y \bigr) \,dy. 
    \end{aligned}
  \end{equation*}
   Assuming classical solutions, the boundary conditions \eqref{eqns:BCI} give 
  \begin{equation*}
    \begin{aligned}
        & - \int_0^{\infty}\!\!\int_{z_0}^{\infty}\frac{\partial }{\partial z} \big(( a_1 z - a_2 y) I(t,z,y)  \big) + \frac{\partial }{\partial y} \big( (-a_3 y  + a_4 z ) I(t,z,y)  \big)  \,dz dy
        \\
        & = \Big(\int_0^{y_0} + \int_{y_0}^{\infty}\Big) (a_1 z_0 - a_2 y) I(t,z_0,y) \,dy.
    \end{aligned}
  \end{equation*}
  The second term cancels, while the first term is equal to
  \begin{equation}
      \begin{aligned}
          \int_0^{y_0} (a_1 z_0 -a_2y) \frac{E(t) g(y)}{(a_1 z_0 - a_2 y^\star) \tau_h}\,dy 
      \end{aligned}
  \end{equation}
  which is simply
  \begin{equation}
      \frac{E(t)}{\tau_h},
  \end{equation}
  from the definition of $y^\star$ and $\int_0^{y_0} g(y) \,dy = 1.$ Similarly, the terms involving $R$ easily cancel,
  which concludes the proof.
% \end{proof}

\subsection{Proof of Proposition \ref{prop:R0Full}}

%\begin{proof}[Proof of Proposition \ref{prop:R0Full}]
    From the expressions \eqref{equi-delay}, it is clear that the endemic equilibrium exists and has components in $(0,1)$ if, and only if, $\mathcal{R}_0 >1.$ Therefore it remains to show that setting $E^*, E_v^*, I_v^*$ according to \eqref{equi-delay}, then defining $I^*(z,y)$ by \eqref{Istar} and $R^*(y)$ by \eqref{Rstar}, we get a solution to \eqref{eqns:EquiFull}.

    Using the last three equations of \eqref{eqns:EquiFull}, we find after some straightforward computations,
    % \begin{equation*}
    %     \begin{aligned}
    %         E^* \Big( 1+ \frac{b \beta_{vh}m\tau_h}{1+\tau_v\mu_v}\Big(1+ \frac{1}{E^*}\iint I^*+\frac{1}{E^*}\int R^*\Big)\Big) & =  \frac{b \beta_{vh}m\tau_h}{1+\tau_v\mu_v} - \frac{\mu_v}{b\frac{1}{E^*}\iint I^*\beta_{hv}}
    %     \end{aligned}
    % \end{equation*}
    % and then
    \begin{equation}\label{Estar1}
        \begin{aligned}
            E^* = \frac{\mu_v \tau_h\Big( \frac{b^2\beta_{vh}m\frac{\tau_h}{E^*}\iint I^*\beta_{hv}}{\mu_v(1+\tau_v\mu_v)} -1\Big)}{\mu_v \frac{b^2\beta_{vh}m\frac{\tau_h}{E^*}\iint I^*\beta_{hv}}{\mu_v(1+\tau_v\mu_v)}\Big(\tau_h + \frac{\tau_h}{E^*}\iint I^* + \frac{\tau_h}{E^*}\int R^* \Big)}.
        \end{aligned}
    \end{equation}
    Comparing \eqref{Estar1} with \eqref{equi-delay},\eqref{R_0Full}, and \eqref{defs1}, we see that the result will be proved if we show that
    \begin{equation}
        \begin{aligned}
            &\frac{\tau_h}{E^*}\iint I^* =\Tcal_1 ,
            \\
            &\frac{\tau_h}{E^*}\int R^* = \Tcal_2,
            \\
            &\frac{\tau_h}{E^*}\iint I^*\beta_{hv} = \Tcal_1[\beta_{hv}],
        \end{aligned}
    \end{equation}
    where $\Tcal_1,\Tcal_2,\Tcal_1[\beta_{hv}]$ are defined in \eqref{defs1}.
    Consider $\Tcal_1$ first. We have
    \begin{equation}\label{I01a}
        \begin{aligned}
            \frac{\tau_h}{E^*}\int_0^\infty \!\!\int_{z_0}^\infty I^*(z,y) \,dzdy &= \frac{\tau_h}{E^*} \int_0^{y_0} \!\! \int_0^{\tau_1(y')} I^*(z(\tau,y'),y(\tau,y')) |J\Phi| \,d\tau dy',
        \end{aligned}
    \end{equation}
    where $\Phi(\tau,y') = (z(\tau,y'),y(\tau,y'))$ is the characteristic flow and $\tau_1$ is defined in Prop.~\ref{prop:y}. One computes 
    \begin{equation}\label{I02}
        |J\Phi(\tau,y')| = e^{(a_1-a_3) \tau} V_z(z_0,y').
    \end{equation}On the other hand, integrating along the characteristic which reaches $(z,y)$ (see \eqref{Istar}), and using the boundary condition \eqref{eqns:BCI}, we have that
    \begin{equation}\label{I03}
    \begin{aligned} 
        I^*(z(\tau,y'),y(\tau,y')) &= I^*(z(0,y'),y(0,y')) e^{(a_3-a_1) \tau}
        \\
        & =I^*(z_0,y') e^{(a_3-a_1) \tau}
        \\
        & = \frac{E^*}{\tau_h}\frac{g(y')}{ V_z(z_0,y^\star)} e^{(a_3-a_1) \tau}.
    \end{aligned}
    \end{equation}
    Using \eqref{I02},\eqref{I03} in \eqref{I01a}, we get
    \begin{equation}\label{I04}
        \begin{aligned}
            \frac{\tau_h}{E^*} \int_0^{y_0} \!\! \int_0^{\tau_1(y')} I^*(z(\tau,y'),y(\tau,y')) |J\Phi| \,d\tau dy'& = \int_0^{y_0}  \!\! \int_0^{\tau_1(y')}\frac{g(y')V_z(z_0,y')}{V_z(z_0,y^\star)}\,d\tau dy'
            \\
            & =\int_0^{y_0} \frac{g(y')V_z(z_0,y')}{V_z(z_0,y^\star)}\tau_1(y') \,dy'
            \\
            & =  \frac{\int_{0}^{y_0}g(y)(a_1z_0 -a_2y) \tau_1(y) \,dy}{\int_{0}^{y_0}g(y)(a_1z_0 -a_2y) \,dy} = \Tcal_1.
        \end{aligned}
    \end{equation}
    For $\Tcal_1[\beta_{hv}]$ we proceed in the same way, but having in mind the presence of $\beta_{hv}(z)$. We get
    \begin{equation}\label{T01}
        \begin{aligned}
            \frac{\tau_h}{E^*}\int_0^\infty \!\!\int_{z_0}^\infty I^*(z,y) \beta_{hv}(z) \,dzdy &= \frac{\tau_h}{E^*} \int_0^{y_0} \!\! \int_0^{\tau_1(y')} I^*(z(\tau,y'),y(\tau,y')) \beta_{hv}(z(\tau,y')) |J\Phi| \,d\tau dy',
            \\
            &= \int_0^{y_0}  \!\! \int_0^{\tau_1(y')}\frac{g(y')V_z(z_0,y')}{V_z(z_0,y^\star)}\beta_{hv}(z(\tau,y'))\,d\tau dy'
            \\
            & = \frac{\int_{0}^{y_0}g(y)(a_1z_0 -a_2y) \Big[\int_{0}^{\tau_1(y)} \beta_{hv}(z(\tau,y)) d\tau\Big] \,dy}{\int_{0}^{y_0}g(y)(a_1z_0 -a_2y) \,dy} = \Tcal_1[\beta_{hv}].
        \end{aligned}
    \end{equation}
    For the $\Tcal_2$ term, we find by integration of the $R^*$ equation
    \begin{equation*}
        \begin{aligned}
            a_5 y R^*(y) = - \int_y^\infty I^*(z_0,y') V_z(z_0,y') \,dy',
        \end{aligned}
    \end{equation*}
    whence
    \begin{equation}\label{Y01}
        \begin{aligned}
            a_5 \int_{y_0}^\infty R^*(y) \,dy &= -\int_{y_0}^\infty\!\! \int_y^\infty \frac{1}{y}I^*(z_0,y') V_z(z_0,y') \,dy'dy
            \\
            &= -\int_{y_0}^\infty \!\! \int_{y_0}^{y'} \frac{1}{y} I^*(z_0,y') V_z(z_0,y') \,dydy'
            \\
            &= -\int_{y_0}^\infty I^*(z_0,y')V_z(z_0,y') \ln(y'/y_0) \,dy'.
        \end{aligned}
    \end{equation}
    To obtain the expression of $\Tcal_2$ from \eqref{Y01}, we must write the integral in $y'\in (0,y_0)$. We proceed as follows. Given a function $\overline a(y)$ for $y\in (y_0,y^+(0))$, we can consider the function $a(z,y)$ defined on $A$ which is constant along the (backward) characteristic curves from $(z_0,y^+)$ to the corresponding $(z_0,y') \in (0,y_0),$ and such that $a(z_0,y) = \overline{a}(z_0)$ for $y> y_0$. By definition of the characteristics, we have $(V_z,V_y)\cdot \nabla a =0$. Then we have $\mathrm{div}((V_z,V_y)I^*a) = \mathrm{div}((V_z,V_y)I^*)a + I^* (V_z,V_y)\cdot \nabla a = 0.$
    By the Divergence Theorem, we find
    \begin{equation}
        \begin{aligned}
            0 & = \int_0^\infty \!\!\int_{z_0}^\infty \mathrm{div}((V_z,V_y)I^*a) \,dzdy
            \\
            &= \int_0^{y_0} V_z(z_0,y') I^*(z_0,y') a(z_0,y') \,dy' - \int_{y_0}^\infty V_z(z_0,y^+) I^*(z_0,y^+) a(z_0,y^+) \,dy^+.
        \end{aligned}
    \end{equation}
    Since $a$ is constant along each characteristic, we have $a(z_0,y') =a(z_0,y^+(y')) = \overline{a}(y^+(y')).$ Thus,
    \begin{equation}
        \begin{aligned}
            \int_{y_0}^\infty V_z(z_0,y^+) I^*(z_0,y^+) \overline{a}(y^+) \,dy^+ &=\int_0^{y_0} V_z(z_0,y') I^*(z_0,y') \overline{a}(y^+(y')) \,dy'.
        \end{aligned}
    \end{equation}
    With $\overline{a}(y)= \ln(y/y_0)$, we get from \eqref{Y01} that 
    \begin{equation*}
        \begin{aligned}
            \int_{y_0}^\infty R^*(y) \,dy &= \frac{1}{a_5}\int_0^{y_0} V_z(z_0,y') I^*(z_0,y') \ln(y^+(y')/y_0) \,dy'
            = \Tcal_2,
        \end{aligned}
    \end{equation*}
    after using the boundary condition \eqref{eqns:BCI} again. This completes the proof of Proposition~\ref{prop:R0Full}.
% \end{proof}

%
%
%
%%%%%%%%%%%%%%%%%%%%%%%%%%%%%%%%%%%%%%%%%%%%%%%%%%%%%%%%%%%%%%%%%%
\section*{Acknowledgements}
The authors were partially supported by the CAPES/MATH-Amsud grant ``StrucMetaMoC" nr. 88881.179477/2025-01, and by the CNPq Universal grant nr. 401905/2025-0. SA was partially supported by FAPERJ grants nr.~E-26/210.037/2024 and 260003/020457/2025, ad CNPq grant 312407/2023-8.
PA thanks Prof. Bernardo F.P. Costa for helpful discussions.

\bibliographystyle{plain}       
\bibliography{viralload}
%%%%%%%%%%%%%%%%%%%%%%%%%%%%%%%%%%%%%%%%%%%%%%%%%%%%%%%%%%%%%%%%%%

\end{document}